\newtheorem{theorem}{Theorem}[section]
\newtheorem{corollary}[theorem]{Corollary}
\newtheorem{lemma}[theorem]{Lemma}
\theoremstyle{definition}
\newtheorem{example}[theorem]{Example}
\newtheorem{conjecture}[theorem]{Conjecture}
\newcommand{\R}{\mathbb{R}}
\newcommand{\C}{\mathbb{C}}
\newcommand{\N}{\mathbb{N}}
\newcommand{\D}{\mathbb{D}}
\newcommand{\dbar}{\overline{\partial}}
\newcommand{\Bal}{\mathrm{Bal}}
\renewcommand{\Re}{\mathrm{Re}\,}
\author{Charles Z. Martin}
\date{\today}
\title{Variational Methods and Planar Elliptic Growth}
\begin{document}

\maketitle
\begin{abstract}
\noindent
  A nested family of growing or shrinking planar domains is called a Laplacian growth process if the normal velocity
  of each domain's boundary is proportional to the gradient of the domain's Green function with a fixed singularity on the interior.
  In this paper we review the Laplacian growth model and its key underlying assumptions,
  so that we may consider a generalization to so--called elliptic growth, wherein the Green function is replaced with that of a
  more general elliptic operator---this models, for example, inhomogeneities in the underlying plane.
  In this paper we continue the development of the underlying mathematics for elliptic growth, considering
  perturbations of the Green function due to those of the driving operator,
  deriving characterizations and examples of growth,
  developing a weak formulation of growth via balayage, and discussing of a couple of inverse
  problems in the spirit of Calder\'on. We conclude with a derivation of a more delicate, reregularized model for Hele--Shaw flow.
\end{abstract}

\section{Introduction}
  \subsection{Hele--Shaw Flow and Laplacian Growth}
  A slow, viscous, incompressible fluid is trapped in a narrow region between two parallel plates.
  With one dimension significantly smaller
  in scale than the others, one might imagine neglecting fluid depth and treating the flow as two--dimensional.
  When the gap between plates is sufficiently small, we can indeed make this approximation;
  the flow behaves like that of a two--dimensional
  fluid in a porous medium, obeying Darcy's law. The resulting dynamical law is
  \begin{equation*}
    \mathbf{v} = \frac{-h^2}{12\mu}\nabla p,
  \end{equation*}
  where $p$ is the fluid pressure, $h$ is the gap width between the plates, and $\mu$ is the viscosity of the fluid.
  This model of fluid flow is known as Hele--Shaw flow, after English engieneer Henry Selby Hele--Shaw who first observed the
  phenomenon.
  
  Let's now consider the fluid domain holistically; let $D\subseteq \C$ be the fluid domain, which we assume to be bounded.
  Since only the normal boundary velocity $v_n$ is observable, the dynamics reduce to
  \begin{equation}\label{Hele-shaw-darcy}
    v_n(\zeta) = \frac{-h^2}{12\mu}\partial_n p(\zeta) \qquad (\zeta\in\partial D).
  \end{equation}
  Next we consider a boundary condition for the fluid pressure. Assume that the ambient fluid, say air or water,
  is comparatively nonviscous and at a constant, atmospheric pressure $p_0$ near the fluid domain $D$.
  At the fluid boundary the force balance is
  \begin{equation*}
    p = p_0 + \kappa\sigma,
  \end{equation*}
  where $\kappa$ denotes the mean curvature of $\partial D$ and $\sigma$ is a surface tension coefficient.
  In the derivation of Hele--Shaw flow one finds that the vertical (that is, orthogonal to the plates) dependence of velocity is
  quadratic and the same at all points on the boundary;
  since the plate separation is so small, the curvature of the boundary in the vertical direction is very large compared
  to the curvature in the plane. As such, the mean curvature $\kappa$ is essentially constant at all points of the fluid boundary.
  Finally, adding an appropriate constant allows us to take
  \begin{equation} \label{pressure_vanish}
    p\Big|_{\partial D} = 0.
  \end{equation}
  That said, the neglect of surface tension is source of much discussion on the mathematical properties of Hele--Shaw flow;
  we will return to this point in the section on reverse--time behavior below.
  
  An interesting consequence of a two--dimensional flow existing in three--dimensional space is that we can consider
  extracting or injecting fluid into the system from above. Supposing that fluid is pumped at a point source $w\in D$, we can
  adjust the model accordingly by altering the incompressibility condition to
  \begin{equation} \label{Hele-shaw_divergence}
    \nabla\cdot \mathbf{v} = Q\delta_w,
  \end{equation}
  where $\delta_w$ is a Dirac delta supported at $w$ and $Q$ is a real parameter controlling the rate of suction or
  injection---negative and positive $Q$ yielding suction and injection, respectively. Combining equations
  \eqref{Hele-shaw-darcy}, \eqref{pressure_vanish} and \eqref{Hele-shaw_divergence}, we have the following model:
  \begin{equation}
    \begin{cases}
      \nabla\cdot \mathbf{v} = Q\delta_w & \textrm{ in } D\\
      p=0 & \textrm{ on } \partial D \\
      v_n = -(h^2/12\mu)\partial_n p & \textrm{ on } \partial D.
    \end{cases}
  \end{equation}
  We can rewrite this a bit if we recall that $\mathbf{v} = -(h^2/12\mu)\nabla p$ throughout $D$;
  using this and---for simplicity---taking $h^2/12\mu=1$, we obtain
  \begin{equation}\label{LG_definition}
    \begin{cases}
      \Delta p = -Q\delta_w & \textrm{ in } D\\
      p=0 & \textrm{ on } \partial D \\
      v_n = -\partial_n p & \textrm{ on } \partial D.
    \end{cases}
  \end{equation}
  A growing or shrinking family of domains obeying these equations is known as a Laplacian growth process.
  Though it might seem strange to give the flow a second name, Laplacian growth can in fact model
  a number of other physical processes, such as electrodeposition and crystal formation, and stochastic processes, such as
  diffusion--limited aggregation. In the next two sections
  we will discuss the mathematics of such Laplacian growth, emphasizing both its elegant and pathological properties.
  There is much literature on Laplacian growth and Hele--Shaw flows;
  surveys of both the history and physics can be found in \cite{Gustafsson},
  \cite{Mineev}, and \cite{Hele-Shaw_History}.
  One of the major breakthroughs in the subject was the introduction of conformal mapping techniques to model boundary motion, which
  first appeared in articles by Polubarinova \cite{Polubarinova, Polubarinova2} and Galin \cite{Galin}. Studies of uniqueness of forward--time
  solutions first appeared in work by Kufarev and Vinogradov in \cite{Kufarev}, and have since been refined and simplified
  \cite{Gustafsson_uniqueness, Reissig}. Reverse--time behavior is more delicate and is discussed below.
  There have been many numerical treatments \cite{Hou, Ceniceros, Ceniceros2},
  but reverse--time modelling is more difficult due to the aforementioned delicacy
  of the problem \cite{Meiburg}. Other descriptions of the dynamics use the so--called Schwarz function $S$ of $\partial D$, which is
  an analytic function in a neighborhood of the boundary so that $S(z) = \overline{z}$ on $\partial D$. The Schwarz function---which
  does not appear in the works of Hermann Schwarz---was named in his honor by Paul Davis, who developed
  the idea in his 1974 monograph \cite{Davis}. Laplacian growth can then be
  reduced to the equation
  \begin{equation*}
    \partial_t S + 2\partial_z W = 0,
  \end{equation*}
  where $W$ is an analytic function whose real part is negative pressure;
  a derivation and discussion can be found in, e.g. \cite{Mineev, Gustafsson}
  Finally, notice that $-p/Q$ is the Green function
  of the domain $D(t)$. Indeed, Laplacian growth has deep connections with
  problems and results in potential theory.
  
  Now we turn to some of the elegant structure one can find within the Laplacian growth model.
  The most well--known and fundamental result is Richardson's theorem \cite{Richardson}:
  \begin{theorem}
    Suppose $\{D(t):0\leq t< t_0\}$ is a family of domains with $C^2$ boundaries satisfying \eqref{LG_definition}. If $f:\C\to\C$ is
    harmonic, then
    \begin{equation*}
      \frac{d}{dt}\int_{D(t)} f\, dA = Qf(w).
    \end{equation*}
    In particular, if $w=0$ we have
    \begin{align} \label{harmonic_moments}
      \frac{d}{dt}\int_{D(t)} z^n\, dA &= 0 \qquad (n\in \N), \\
      \frac{d}{dt}\mathrm{area}(D(t)) &= Q.
    \end{align}
  \end{theorem}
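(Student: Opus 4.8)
The plan is to turn the time derivative into a flux through the moving boundary, and then to evaluate that flux with Green's second identity, using each of the three conditions in \eqref{LG_definition} exactly once. Since $f$ is complex-valued we may argue on its real and imaginary parts separately, so assume $f$ is real and harmonic. The first step is the Reynolds transport theorem: because the integrand does not depend on $t$ and the domains vary with $C^2$ boundaries whose points move with outward normal velocity $v_n$,
\begin{equation*}
  \frac{d}{dt}\int_{D(t)} f\, dA \;=\; \int_{\partial D(t)} f\, v_n\, ds,
\end{equation*}
with $ds$ arc length. The kinematic law $v_n = -\partial_n p$ rewrites the right side as $-\int_{\partial D(t)} f\,\partial_n p\, ds$, so the problem collapses to evaluating this boundary integral at a fixed time $t$.

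For that, I would use the other two conditions. From $\Delta p = -Q\delta_w$ the pressure behaves like $-\tfrac{Q}{2\pi}\log|z-w|$ near $w$, so I excise a disk $B_\varepsilon(w)$ and apply Green's second identity on $D(t)\setminus B_\varepsilon(w)$:
\begin{equation*}
  \int_{\partial(D\setminus B_\varepsilon)}\bigl(f\,\partial_n p - p\,\partial_n f\bigr)\, ds \;=\; \int_{D\setminus B_\varepsilon}\bigl(f\,\Delta p - p\,\Delta f\bigr)\, dA.
\end{equation*}
The right-hand side is zero: $\Delta p = 0$ off $w$ and $\Delta f = 0$ by harmonicity. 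On the outer component $\partial D$ the condition $p=0$ kills the $p\,\partial_n f$ term, leaving $\int_{\partial D} f\,\partial_n p\, ds$. On $\partial B_\varepsilon(w)$ the logarithmic asymptotics give that the $f\,\partial_n p$ term tends to $Q\,f(w)$ while the $p\,\partial_n f$ term is $O(\varepsilon\log\varepsilon)\to 0$. Letting $\varepsilon\to 0$ gives $\int_{\partial D(t)} f\,\partial_n p\, ds = -Q\,f(w)$, whence $\frac{d}{dt}\int_{D(t)} f\, dA = Q\,f(w)$. (Equivalently: $-\int_{\partial D} f\,\partial_n p\, ds$ is, by the Poisson representation, $Q$ times the value at $w$ of the harmonic function with boundary data $f|_{\partial D}$, and that function is $f$ itself.)

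The two displayed corollaries then follow at once: with $w=0$, taking $f(z)=z^n$ (holomorphic, hence harmonic) gives $\frac{d}{dt}\int_{D(t)} z^n\, dA = Q\cdot 0^n = 0$ for $n\ge 1$, and taking $f\equiv 1$ gives $\frac{d}{dt}\,\mathrm{area}(D(t)) = Q$. \textbf{The main obstacle} I foresee is not this computation---which is routine once the singular point is excised---but the justification of the transport step: one needs the family $\{D(t)\}$ to possess a genuine normal velocity field and the differentiation under the integral sign to be legitimate, and this is exactly where the assumed $C^2$ regularity of the boundaries, together with an implicit hypothesis of reasonable ($C^1$, say) dependence on $t$, must be invoked; with less regularity one would instead pass to a weak formulation.
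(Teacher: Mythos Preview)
Your proof is correct and follows essentially the same route as the paper. The paper does not prove Theorem~1.1 itself (it is cited from Richardson), but the proof it gives for the elliptic analogue in Section~3 is the same idea: transport theorem turns the time derivative into $\int_{\partial D}\phi\,\lambda\partial_n g_w\,ds$, then Green's identity together with $g_w=0$ on the boundary and $Lg_w=\delta_w$ yields $\phi(w)$. The only cosmetic difference is that the paper applies Green's identity directly with the distributional relation $\Delta p=-Q\delta_w$, whereas you excise a disk around $w$ and pass to the limit; your version is the classical justification of that same one-line computation.
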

  Notice that we have now made explicit our previously tacit assumption on the regularity of $\partial D(t)$; so long as
  $\partial D(t)$ is smooth enough to have an outward normal velocity, application of Green's identity is valid as well.
  The integrals in \eqref{harmonic_moments} are called the harmonic moments of the domain $D(t)$.
  Richardson's theorem shows that these moments behave in a simple way during Laplacian growth; the domain evolves in
  such a way that the area increases
  linearly in time while the harmonic moments remain constant. Perhaps surprisingly, the harmonic moments locally characterize the process
  in the sense of an integrable system.
  
  There are a few different meanings of the term `integrable system,' some of which do not have widely accepted, precise definitions.
  Informally, a dynamical system is integrable if there exists an underlying structure which describes the system's evolution
  in a simple way, akin to the simplicity of linear ordinary differential equations. To avoid a lengthy and tangential discussion
  of integrability in general, let's instead describe what it means for Laplacian growth. For simplicity we will
  only consider simply--connected domains, as in \cite{Integrable1}; the interested reader can find a discussion of multiply--connected
  domains in \cite{Integrable2}.
  
  Consider the harmonic moments as generalized coordinates for the domain evolution and define
  \begin{equation}\label{moments}
    t_n = \int_{D(t)} z^n\,dA \qquad (n\in\N\cup\{0\}).
  \end{equation}
  Having assumed that $D(t)$ is simply connected, we have the result that a domain evolution keeping each $t_n$ constant must be trivial.
  In this sense the moments $\{t_n\}$ locally parametrize the phase space of domains. Furthermore,
  there is a commuting of flows in the following sense: if a domain $D$ undergoes evolution due to an infinitesimal change in $t_m$
  followed by an infinitesimal change in $t_n$, the resulting domain is the same had we instead
  evolved $D$ by increasing $t_n$ before $t_m$.
  In this way the moments $\{t_n\}$ are independent.
  Finally, the Laplacian growth of a domain is the simply the `flow line' in the $t_0$--direction which contains the domain.
  There can be more rigor to the argument than we are providing here; in \cite{Integrable1} the authors construct a Poisson bracket
  and Hamiltonian before showing that the generalized coordinates $\{t_n\}$ are a maximal set and are in involution which each other.
  That is, Laplacian growth is integrable in the classical Hamiltonian sense.
  
  A disk undergoing Laplacian growth with a source at its center evolves into a larger disk---this is clear from the symmetry of the
  situation. Now consider a domain whose boundary is a hypocycloid, as seen in figure \ref{fig:hypocycloid}.
  \begin{figure}[b]
  \centering
  \includegraphics[width=0.5\linewidth]{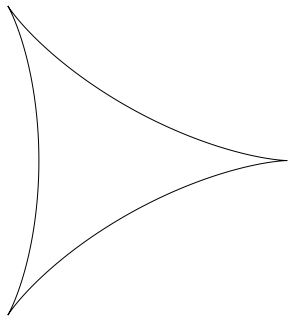}
  \caption{Hypocycloid}
  \label{fig:hypocycloid}
  \end{figure}
  If this domain undergoes Laplacian growth
  with the source at the center, eventually the domain evolves into a disk as well (see \cite{Mineev}). In reverse time, this
  demonstrates ill--posedness; a disk undergoing Laplacian growth with a sink at its center can evolve into either a smaller
  disk or a hypocycloid. That is, the dynamics are not unique in the case of shrinking domains. Furthermore, the formation of cusps
  in finite time can mean a deviation from the physical system being modeled. The situation is unfortunately common in this sense;
  Laplacian growth with suction will cause most initial domains to similarly form cusps.
  
  We should remark that cusps are not always bad. Some processes modeled by Laplacian growth do exhibit
  cusp formation in an experimental setting, such as the dynamics of microstructure of materials.
  Furthermore, for some types of cusps the solution can continue to exist \cite{Howison}.
  That said, there are other processes---such as Hele--Shaw flow---for which
  cusps are non--physical. Thus we are led to seek generalizations or corrections to the model.
  
  A common correction to the model is the reinsertion of surface tension effects
  \cite{McLean}. Recall that we neglected surface tension
  using the premise that the mean curvature at each point of the boundary is dominated by the vertical contribution, but
  in a neighborhood of a cusp this approximation is no longer valid. With surface tension Laplacian growth no longer forms cusps,
  but it also loses integrability (see \cite{Mineev}). Furthermore, while surface tension helps correct the model for Hele--Shaw flow,
  it is not necessarily appropriate for other situations.
  
  Let's return to the derivation of Laplacian growth dynamics again, but this time we will reexamine our assumption on $\lambda$,
  the permeability of the underlying plane. We assumed $\lambda$ to be constant everywhere, but again this approximation
  will fail in the vicinity of a cusp---unless the permeability is truly constant, but this possibility is non--physical.
  Taking $\lambda$ to be a nonconstant scalar function changes the derivation in two ways: the incompressibility condition becomes
  $\nabla\cdot (\lambda\mathbf{v}) = \delta_w$, and Darcy's law becomes $\mathbf{v} = -\lambda \nabla p$.
  Thus we are led to the following generalization:
  \begin{equation*}
    \begin{cases}
      \nabla\lambda\nabla p = -Q\delta_w & \textrm{ in } D\\
      p=0 & \textrm{ on } \partial D \\
      v_n = -\lambda\partial_n p & \textrm{ on } \partial D,
    \end{cases}
  \end{equation*}
  where $\nabla\lambda\nabla$ is an abbreviation for $\nabla\cdot (\lambda\nabla)$. Since the dynamics now rely upon the
  elliptic operator $\nabla\lambda\nabla$---commonly known as a Laplace--Beltrami operator---we call these dynamics elliptic growth.
  The majority of this paper is dedicated to characterizing elliptic growth and studying the way in which it generalizes the better--known
  Laplacian growth model.
  
  As a final aside, we remark that other renormalizations and generalizations of Laplacian growth exist.
  One such example is kinetic undercooling regularization \cite{Hohlov}, which replaces the condition boundary condition $p=0$ with
  the more general
  \begin{equation*}
    \beta \partial_n p + p = 0
  \end{equation*}
  on $\partial D$, where $\beta \geq 0$ is constant.
  For another example,
  quasi--2D Stokes flow
  is an attempt to retain the three--dimensional nature of a two--dimensional Hele--Shaw flow; the resulting
  renormalization behaves as an intermediate type of flow, between those of Stokes and Hele--Shaw. A brief discussion and derivation
  are given in the final section of this paper.

\subsection{Balayage and Weak Laplacian Growth}

Newton's theory of gravitation was a landmark achievement of science, but for every physical question it answered a mathematical question
took its place. For this reason, Newton spends time in his seminal work demonstrating a few ways one can sidestep the resulting mathematical
problems. For example, a uniformly dense spherical shell exerts no gravitational force within its interior cavity
and produces the same field as a point mass on its exterior; this fact allows one to discuss the orbits of planets without having to consider
their spatial extent. This result in particular is the simplest example of a general concept in potential theory known as
balayage, the notion of sweeping a measure outward while not changing its far field potential.

Our main goal in this section is to define balayage of a measure and describe a few of its properties, but first we should
review a few of the fundamental objects in measure theoretic potential theory. We consider only finite and positive Borel measures
that are compactly supported on $\C$---such a measure is thought of as a mass or charge distribution. Associated to
a measure $\mu$ is its (Newtonian) potential
\begin{equation*}
  U^\mu(z) = \int (2\pi)^{-1}\ln|z-w|\, d\mu(w).
\end{equation*}
Note that we can reobtain the measure via $\mu = \Delta U^\mu$, and as such the correspondence $\mu\leftrightarrow U^\mu$
can be viewed as a sort of duality between measures and functions. Any change in $\mu$ is reflected as a change in $U^\mu$,
but we might ask if we can alter $\mu$ in a way that leaves $U^\mu$ unchanged outside of a bounded domain---that is, leaving the
potential unchanged `far away'. This is perhaps most familiar in the setting of electrostatics; a charge distribution placed
on a perfect conductor will rearrange itself to be supported on the boundary while not changing the potential outside of the conductor.
An alteration of a measure in this way is the classical notion of balayage, but we will consider a somewhat more general version.

Suppose we have a measure $\mu$ which is rather `dense,' such as a Dirac delta. Rather than sweeping $\mu$ completely out of a given domain,
perhaps we only wish to lessen its density, in the following sense. Given another measure $\tau$, we seek a measure $\nu$ so that
$\nu\leq \tau$, while $\nu$ is somehow as close to $\mu$ as possible. Ideally closeness would be in terms of an energy norm, but
$\mu$ might have infinite energy and in two dimensions the energy `norm' need not even be positive. To get around this problem we will frame
balayage as an obstacle problem for the corresponding potential functions.

We now give a precise definition of (partial) balayage, as it appears in \cite{Balayage}. Let
$\tau$ be an arbitrary measure on $\C$, without the aforementioned restrictions we gave for mass distributions. If $\mu$
is a mass distribution then we consider the following obstacle problem: find the smallest function $V$ so that both
\begin{equation}\label{balayage_definition}
  \begin{cases}
    V \geq U^\mu \\
    \Delta V \leq \tau
  \end{cases}
\end{equation}
throughout $\C$. The balayage of $\mu$ with repsect to $\tau$ is then $\Bal(\mu,\tau) = \Delta V$.
The existence of a solution is well known (see \cite{Balayage2}).
Note that this problem reduces to that of classical balayage when $\tau = 0$ in a domain $D$ and $\tau = \infty$ on $\C\setminus D$.

Balayage is intimately connected with Laplacian growth, as we discuss below. To this end, we will need the following two
theorems; proofs can be found in \cite{Balayage}.
\begin{theorem}\label{thm:balayage1}
  Suppose that $D(0)\subset\C$ is a bounded domain and $w\in D(0)$.
  Then for $t\geq 0$ there exists a domain $D(t)\supseteq D(0)$ for which
  \begin{equation*}
    \Bal(\chi_{D(0)} + t\delta_w,1) = \chi_{D(t)},
  \end{equation*}
  where 1 is shorthand for Lebesgue measure and $\chi_D$ denotes Lebesgue measure restricted to $D$.
\end{theorem}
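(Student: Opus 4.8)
The plan is to realize $D(t)$ as the non-coincidence set of the obstacle problem \eqref{balayage_definition} defining $\Bal(\chi_{D(0)}+t\delta_w,1)$, to compute the balayage measure piece by piece, and to check that the set so produced is a domain containing $D(0)$. Fix $t>0$ (the case $t=0$ is trivial), write $\mu=\chi_{D(0)}+t\delta_w$, and let $V$ be the solution of the obstacle problem, i.e.\ the smallest function on $\C$ with $V\ge U^{\mu}$ and $\Delta V\le 1$; its existence, together with the facts that $\nu:=\Delta V=\Bal(\mu,1)$ is a positive measure with $\nu\le 1$ and that $V$ therefore lies in $W^{2,p}_{\mathrm{loc}}$ away from $w$, is part of the theory in \cite{Balayage2}. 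Since $\Delta V\le 1$ the function $V-\tfrac14|z|^2$ is superharmonic, so $V$ is lower semicontinuous; since $\mu\ge 0$ the potential $U^{\mu}$ is subharmonic, hence upper semicontinuous; thus $\Omega:=\{z:V(z)>U^{\mu}(z)\}$ is open, and a comparison with the potential of Lebesgue measure on a large disc confines $\Omega$ to a bounded set.

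Next I would determine $\nu$ on $\Omega$ and on the coincidence set $\Lambda:=\C\setminus\Omega$. On $\Omega$, minimality of $V$ forces $\Delta V=1$ a.e.: if $\Delta V<1$ on a positive-measure subset of $\Omega$, one could decrease $V$ slightly there while keeping both $V\ge U^{\mu}$ (which held strictly) and $\Delta V\le 1$, contradicting that $V$ is smallest. Since $U^{\mu}(z)\to-\infty$ as $z\to w$ while $V$ is bounded below near $w$, the atom is absorbed: $w\in\Omega$. On $\Lambda$ we have $V=U^{\mu}$, so $h:=V-U^{\mu}$ is a nonnegative $W^{2,p}_{\mathrm{loc}}$ function (away from $w$, which lies in $\Omega$) vanishing on $\Lambda$; hence its Hessian vanishes a.e.\ on $\Lambda$, giving $\Delta V=\Delta U^{\mu}=\mu$ there, and $\mu$ restricted to $\Lambda$ carries no atom and coincides with $\chi_{D(0)\cap\Lambda}$. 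Combining, $\nu$ agrees with Lebesgue measure on $\Omega$ and with $\chi_{D(0)}$ on $\Lambda$; that is, $\nu=\chi_{D(t)}$ with $D(t):=\Omega\cup D(0)$, which is open and contains $D(0)$.

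It remains to see that $D(t)$ is connected. I would first show every connected component $\Omega'$ of $\Omega$ contains $w$: if some $\Omega'$ does not, then $w\notin\overline{\Omega'}$, and on the bounded open set $\Omega'$ the function $h=V-U^{\mu}$ is continuous, subharmonic (there $\Delta h=1-\mu\ge 0$), vanishes on $\partial\Omega'\subseteq\partial\Omega$, yet is strictly positive inside---impossible by the maximum principle. Hence $\Omega$ is connected, and since it meets the connected set $D(0)$ at the point $w$, the union $D(t)=\Omega\cup D(0)$ is connected. Finally, applying the foregoing with $D(0)$ replaced by $D(s)$ together with the iteration identity $\Bal(\Bal(\mu',1)+\rho,1)=\Bal(\mu'+\rho,1)$ gives the nesting $D(s)\subseteq D(t)$ for $0\le s\le t$.

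The main obstacle I foresee is the step that upgrades the soft bound $\nu\le 1$ to the rigid conclusion $\nu=\chi_{D(t)}$. This is exactly where the dichotomy of the obstacle problem is used---$\nu$ equals $\mu$ on the contact set and equals $1$ off it---and where the Sobolev regularity of $V$ is needed to evaluate $\nu$ on $\Lambda$; the two pieces fuse into a single domain precisely because the atom $t\delta_w$ is swept into $\Omega$ while $\chi_{D(0)}$ already meets the density constraint, leaving no room for fractional density. A subsidiary and more routine point is verifying boundedness of $\Omega$ and (if one wants it) conservation of mass via the behaviour at infinity.
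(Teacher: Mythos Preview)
Your argument is essentially correct, modulo the obstacle-problem regularity you invoke. Note, however, that the paper does not actually prove this theorem: Theorem~\ref{thm:balayage1} is stated in the introduction with proofs deferred to \cite{Balayage}. The closest in-paper proof is the elliptic analogue in Section~5, and there the route is quite different from yours. The paper first proves the iteration identity $\Bal_\lambda(\mu_1+\mu_2,1)=\Bal_\lambda(\Bal_\lambda(\mu_1,1)+\mu_2,1)$, then quotes a structural lemma that balayage of a point mass, or of any bounded-density measure, yields $\chi_D$ for some domain $D$; the result follows by first sweeping $t\delta_w$ to some $\chi_{D'(t)}$ and then sweeping $\chi_{D'(t)}+\chi_{D(0)}$.

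Your approach instead dissects the obstacle problem directly: you identify the non-coincidence set $\Omega$, read off $\nu=1$ on $\Omega$ and $\nu=\mu$ on the contact set via the $W^{2,p}$ regularity of $V$, and then use the maximum principle on $h=V-U^\mu$ to force every component of $\Omega$ to contain $w$, so that $D(t)=\Omega\cup D(0)$ is visibly connected. This is more hands-on and makes the connectedness of $D(t)$ explicit---something the paper's modular argument leaves inside the cited black boxes. Conversely, the paper's iteration-lemma route separates concerns cleanly and transfers verbatim to the Laplace--Beltrami operator $\nabla\lambda\nabla$, which is the paper's actual target; your direct argument would also transfer, but each ingredient (regularity of $V$, the a.e.\ Hessian argument on the contact set, the subharmonicity of $h$ on components avoiding $w$) would need to be rechecked for the variable-coefficient operator. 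Your final paragraph on nesting via the iteration identity is in fact the one place where the two approaches touch.
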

\noindent
From our previous discussions, one should sense that Laplacian growth is closely tied to potential theory. Since balayage
is the `bleeding' of a measure like a porous fluid flow, the following theorem is perhaps unsurprising.
\begin{theorem}
  Let $\{D(t):0\leq t<t_0\}$ be a Laplacian growth process with $Q>0$ as defined in \eqref{LG_definition}. Then
  \begin{equation}\label{LG_weak}
    \Bal(\chi_{D(0)} + Qt\delta_w, 1) = \chi_{D(t)}.
  \end{equation}
\end{theorem}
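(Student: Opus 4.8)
The plan is to sidestep any appeal to uniqueness and instead write down the solution of the obstacle problem \eqref{balayage_definition} explicitly in terms of the Laplacian growth data. Abbreviate $\mu_t = \chi_{D(0)} + Qt\,\delta_w$, let $p(\cdot,s)$ be the pressure for the domain $D(s)$ determined by \eqref{LG_definition}, extended by zero to all of $\C$, and put $u(\cdot,t) = \int_0^t p(\cdot,s)\,ds$ and $V_t = U^{\mu_t} + u(\cdot,t)$. I would then prove that $V_t$ is the smallest function satisfying both conditions in \eqref{balayage_definition} with $\tau = 1$; granting this, $\Bal(\mu_t,1) = \Delta V_t$, and the computation below identifies this with $\chi_{D(t)}$, which is \eqref{LG_weak}.

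The first step is to compute $\Delta u(\cdot,t)$ as a distribution. For fixed $s$, the zero extension of $p(\cdot,s)$ is continuous across $\partial D(s)$ (it vanishes from both sides) and is harmonic off $\{w\}\cup\partial D(s)$, so its distributional Laplacian is $\Delta p(\cdot,s) = -Q\,\delta_w + v_n(\cdot,s)\,\sigma_s$, where $\sigma_s$ is arclength measure on $\partial D(s)$ and the boundary density is the jump of the normal derivative, which by \eqref{LG_definition} is exactly the normal velocity $v_n = -\partial_n p$. Integrating in $s$ and interchanging $\Delta$ with the integral, $\Delta u(\cdot,t) = -Qt\,\delta_w + \int_0^t v_n(\cdot,s)\,\sigma_s\,ds$. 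The crux is the identity $\int_0^t v_n(\cdot,s)\,\sigma_s\,ds = \chi_{D(t)\setminus D(0)}$: the curves $\partial D(s)$, $0<s<t$, foliate the annular region $D(t)\setminus\overline{D(0)}$, and along this foliation $v_n$ is precisely the Jacobian factor relating $\sigma_s\,ds$ to planar Lebesgue measure, so this is just the coarea formula. Since $D(0)\subseteq D(t)$, combining the two computations gives $\Delta V_t = \Delta U^{\mu_t} + \Delta u(\cdot,t) = \mu_t + \big(\chi_{D(t)} - \mu_t\big) = \chi_{D(t)}$, which in particular is $\le 1$.

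Next I would check $V_t \ge U^{\mu_t}$ and minimality. The inequality is immediate: $p(\cdot,s)$ is superharmonic in $D(s)$ and vanishes on $\partial D(s)$, so $p(\cdot,s)\ge 0$ by the minimum principle, whence $u(\cdot,t)\ge 0$. For minimality, note that $p(\cdot,s)$ is supported in $D(s)\subseteq D(t)$, so $u(\cdot,t)$ vanishes on $\C\setminus D(t)$, including on $\partial D(t)$; hence $V_t = U^{\mu_t}$ there. If $W$ is any function with $W\ge U^{\mu_t}$ and $\Delta W\le 1$, then $W\ge U^{\mu_t} = V_t$ off $D(t)$, while inside $D(t)$ the difference $V_t - W$ satisfies $\Delta(V_t - W) = 1 - \Delta W\ge 0$ and is $\le 0$ on $\partial D(t)$, so $V_t\le W$ in $D(t)$ by the maximum principle for subharmonic functions. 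Thus $V_t\le W$ everywhere, $V_t$ solves the obstacle problem, and $\Bal(\mu_t,1) = \Delta V_t = \chi_{D(t)}$.

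The part I expect to be the real obstacle is making the ``foliation'' rigorous, i.e. the identity $\int_0^t v_n(\cdot,s)\,\sigma_s\,ds = \chi_{D(t)\setminus D(0)}$. This requires that the family be strictly expanding---$v_n>0$ everywhere on each $\partial D(s)$, so that each point of $D(t)\setminus\overline{D(0)}$ lies on exactly one $\partial D(s)$---which follows from Hopf's lemma, since $p(\cdot,s)$ is positive in $D(s)$, vanishes on the $C^2$ boundary $\partial D(s)$, and is harmonic near it; one also needs enough regularity of $s\mapsto D(s)$ to justify the coarea change of variables and the interchange of $\Delta$ with the time integral. As a remark, one could instead apply Theorem~\ref{thm:balayage1} to the measure $\chi_{D(0)} + Qt\,\delta_w$ and then invoke uniqueness of forward--time Laplacian growth, but the construction above is self--contained, and it recovers Richardson's theorem as a by--product through the potential--preserving property of balayage.
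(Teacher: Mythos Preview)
Your proposal is correct, and the minimality step (the maximum--principle argument on $D(t)$) is exactly what the paper does. The difference lies in how the candidate solution of the obstacle problem is produced and verified.

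The paper does not prove this Laplacian statement directly but proves its elliptic generalization in Section~5; specializing that argument to $\lambda\equiv 1$ gives the intended proof. There the candidate is simply $V = U^{\chi_{D(t)}}$, and the two obstacle conditions are checked by integrating the Richardson--type identity
\[
\frac{d}{dt}\int_{D(t)}\phi\,dA \;=\; Q\phi(w) + \int_{D(t)} g_w\,\Delta\phi\,dA
\]
with $\phi(\cdot)=E(\cdot-a)$: for $a\notin D(t)$ the second term vanishes, yielding $U^{\chi_{D(t)}}=U^{\mu_t}$ outside $D(t)$; for arbitrary $a$ the sign $g_w\Delta\phi\le 0$ gives $U^{\chi_{D(t)}}\ge U^{\mu_t}$ everywhere. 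No coarea or foliation argument is needed.

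You instead build the candidate as $V_t = U^{\mu_t} + \int_0^t p(\cdot,s)\,ds$ and identify $\Delta V_t$ distributionally via the coarea identity $\int_0^t v_n\,\sigma_s\,ds = \chi_{D(t)\setminus D(0)}$. This is of course the same function (both have Laplacian $\chi_{D(t)}$ and agree outside $D(t)$), reached by a different computation. The paper's route is a bit slicker precisely because it sidesteps the foliation step you flag as delicate; on the other hand, your construction is the Baiocchi transform, so it makes the link to the standard variational--inequality formulation of the free boundary problem explicit, and it generalizes verbatim once one replaces $p$ by the Green function of any elliptic operator.
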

\noindent
This theorem leads us to extending the definition of Laplacian growth; Balayage always exists for any domain $D(0)$, and by theorem
\ref{thm:balayage1} the expression $\Bal(\chi_{D(0)} + Qt\delta_w,1)$ is always the characteristic function of a domain. Hence
we call a family of domains satisfying \eqref{LG_weak} a weak solution of Laplacian growth. In this way
we can study growth of a domain that doesn't have a smooth enough boundary to support a normal boundary velocity function.

The weak formulation of Laplacian growth allows us to draw conclusions about the strong formulation. For example, writing
$\Bal(\mu,1)=\chi_D$ identifies $D$ uniquely up to null sets, so forward--time Laplacian growth in the weak sense has a unique solution
(once we choose a normalization for $D$). Therefore forward--time Laplacian growth in the strong sense has a unique solution as well.
We will give a more thorough treatment of this weak formulation in a later section, when we generalize
it to elliptic growth processes.

\subsection{Variations of the Green Function}

The study of boundary value problems presents a significant computational challenge. In theory there are numerous formulas, transforms and methods for solving, say, the
Dirichlet problem on an ellipse for the Laplace--Beltrami operator. For example, knowledge of the Green function would reduce the
problem to that of calculating an integral. However, the computation of a Green function is prohibitively difficult; considering its central role
we might turn to methods of approximating it instead. One approach aims to replace the Green function of the problem
with that of an easier problem which is, in some sense, nearby. There are two ways this heuristic can proceed: variation of the
underlying domain and variation of the relevant operator. We first mention the more classical approach of domain variation due to
Hadamard. A rigorous treatment of this theorem can be found in \cite{SS} or in chapter 15 of \cite{Garabedian}.

\begin{theorem}[Hadamard's formula] \label{Hadamard}
  Let $p\in C(\partial D)$ be a positive function and suppose that for $\epsilon > 0$ each point $\zeta\in\partial D$ is moved along the outward normal direction
  a distance $\epsilon p(\zeta)$. The Green function $g^*$ of the new domain $D^*$ satisfies
  \begin{equation*}
    g_w^*(z) = g_w(z)  -\epsilon\int_{\partial D} p \,\partial_n g_z\cdot \partial_n g_w \,ds
      + o(\epsilon)
  \end{equation*}
  as $\epsilon \to 0$ for each fixed $z,w\in D$.
\end{theorem}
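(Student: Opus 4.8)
\emph{Proof strategy.} The idea is to reduce the formula to the Poisson representation of a harmonic function by its boundary values, the boundary data being supplied by a first--order expansion of $g^*_w$ across the thin collar between $\partial D$ and $\partial D^*$. Fix distinct interior points $z,w\in D$. For $\epsilon$ small the outward push by $\epsilon p$ is injective on $\partial D$ and $D\subset D^*$, so $g_w$ and $g^*_w$ are both defined on $D$, where they solve $\Delta u=\delta_w$; their logarithmic singularities at $w$ therefore cancel and $h:=g^*_w-g_w$ is harmonic on all of $D$, continuous up to $\partial D$ with $h|_{\partial D}=g^*_w|_{\partial D}$ (since $g_w\equiv 0$ there). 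Representing $h$ by its boundary values in $D$ gives
\begin{equation*}
  h(z)=\int_{\partial D} g^*_w(\zeta)\,\partial_n g_z(\zeta)\,ds(\zeta),
\end{equation*}
where $\partial_n g_z>0$ is the Poisson kernel of $D$ at $z$. Everything now reduces to estimating the restriction $g^*_w|_{\partial D}$.

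For $\zeta\in\partial D$ the point $\zeta^{*}:=\zeta+\epsilon p(\zeta)\,n(\zeta)$ lies on $\partial D^{*}$, where $g^*_w$ vanishes; integrating the normal derivative of $g^*_w$ along the segment from $\zeta$ to $\zeta^{*}$,
\begin{equation*}
  g^*_w(\zeta)=-\int_0^{\epsilon p(\zeta)}\partial_n g^*_w(\zeta+t\,n(\zeta))\,dt=-\epsilon p(\zeta)\,\partial_n g^*_w(\zeta)+o(\epsilon),
\end{equation*}
and I would then replace $\partial_n g^*_w$ by $\partial_n g_w$ at the cost of a further $o(1)$, using that $g^*_w\to g_w$ in $C^1$ on a fixed collar of $\partial D$ as $\epsilon\to 0$. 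Substituting $g^*_w(\zeta)=-\epsilon p(\zeta)\,\partial_n g_w(\zeta)+o(\epsilon)$ into the representation for $h$---and noting that, $z$ and $w$ being fixed in the interior, $\partial_n g_z$ and $\partial_n g_w$ are bounded continuous functions on the compact curve $\partial D$---yields
\begin{equation*}
  g^*_w(z)=g_w(z)-\epsilon\int_{\partial D}p\,\partial_n g_z\cdot\partial_n g_w\,ds+o(\epsilon).
\end{equation*}
The sign is the expected one: $p>0$ forces $D^{*}\supset D$, hence $g^*_w\le g_w$ on $D$ by monotonicity of the Green function in the domain, consistent with a negative correction since $\partial_n g_z$ and $\partial_n g_w$ are positive on $\partial D$.

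The substantive work, and the main obstacle, is to make the two error terms genuinely $o(\epsilon)$ and uniform in $\zeta\in\partial D$, so that integration against the fixed Poisson kernel keeps them $o(\epsilon)$ rather than $O(\epsilon)$. For a smooth profile $p$ this is routine: pull $D^{*}$ back to the fixed domain $D$ by a near--identity diffeomorphism $\Phi_\epsilon$ that is the identity outside a tubular neighborhood of $\partial D$ and satisfies $\Phi_\epsilon=\mathrm{id}+O(\epsilon)$ in $C^{2}$; then $g^*_w\circ\Phi_\epsilon$ solves a Dirichlet problem on $D$ for an elliptic operator whose coefficients are $O(\epsilon)$--close to those of $\Delta$, and Schauder estimates up to the boundary give $\|g^*_w\circ\Phi_\epsilon-g_w\|_{C^1}=O(\epsilon)$ on a fixed collar of $\partial D$---enough to dominate both the Taylor remainder and the replacement of $\partial_n g^*_w$, and in fact to sharpen the error in Hadamard's formula to $O(\epsilon^2)$. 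The genuinely delicate point is that $p$ is assumed only continuous, so $\partial D^{*}$ need not be $C^1$ and the collar thins at a rate comparable to $\epsilon$; here I would approximate $p$ uniformly from above and below by smooth positive profiles, apply the smooth case to each, and pass to the limit using monotonicity of $g^*_w$ in the domain together with continuity of both sides of the formula in the supremum norm of the displacement---the step requiring the most care, since the $o(\epsilon)$ error from the smooth case is not a priori uniform in $p$. With that uniform control in hand, the three displays above combine to give the claim.
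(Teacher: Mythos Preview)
The paper does not actually prove Hadamard's formula: it is stated as a classical result, with the rigorous treatment delegated to the references \cite{SS} and chapter~15 of \cite{Garabedian}. So there is no in--paper proof to compare against.

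That said, your argument is the standard heuristic derivation---essentially the one found in Garabedian---and it is correct in outline. The two key moves (writing $g^*_w-g_w$ via the Poisson integral of its boundary trace, then expanding that trace to first order across the thin collar) are exactly right, and the sign check is a nice sanity test. Your identification of the real analytic work is also accurate: for smooth $p$ the pullback--plus--Schauder argument you sketch does give the uniform $C^1$ control needed, while for merely continuous $p$ the monotone approximation you propose is the natural route, and your caveat that the $o(\epsilon)$ error is not automatically uniform in the approximating profile is well placed---this is indeed where the cited references do the careful work. Nothing in your write--up is wrong; it is simply an honest outline that stops short of the technical estimates the paper itself chose to outsource.
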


In traditional notation of the calculus of variations, we can write the aforementioned formula as
\begin{equation*}
	\delta g_w(z) :=\lim_{\epsilon\to 0}\frac{g_w^*(z)-g_w(z)}{\epsilon}
	= -\int_{\partial D} p \,\partial_n g_z\cdot \partial_n g_w \,ds.
\end{equation*}
We will at times use this notation for sake of clarity.
Many ideas and corollaries emerge from Hadamard's formula, such as:
\begin{enumerate}
  \item The outward normal derivative of the Green function is positive since, for instance, it is the density of the domain's harmonic measure.
  Therefore the variation is always negative, so we conclude that enlarging a domain decreases the Green function at every point.
  
  \item Let $\lambda :\C\to\R$ be a positive smooth function and define the operator $L = \nabla\lambda\nabla$. If we alter the definition of the Green function so that
  $Lg = \delta$, then we can derive another variational formula. From Green's identity
  \begin{equation*}
    \int_{\partial D} \lambda\left(u\frac{\partial v}{\partial n} - v\frac{\partial u}{\partial n}\right)\, ds = \int_{D} \left(uL v - vL u\right)\, dA,
  \end{equation*}
  we can derive the first variation of $g$:
  \begin{equation*}
    \delta g_w(z) = -\int_{\partial D} p\lambda\, \partial_n g_z \cdot \partial_n g_w\,ds.
  \end{equation*}
  
  \item One can also obtain variations for related functions, such as $\partial_n g_w(z)$, under domain variation.
  Hadamard's student Paul L\'evy spent some of his early career furthering his advisor's work in this direction,
  producing formulas such as
  \begin{equation*}
    \delta\partial_n g_w(z) = \mathrm{p.v.}\int_{\partial D} \left(\partial_n g_w(z)\delta n(z) - \partial_n g_w(\zeta) \delta n(\zeta) \right)
    \frac{\partial^2 g(z,\zeta)}{\partial n_z\partial n_\zeta}\, ds(\zeta).
  \end{equation*}
  A thorough discussion appears in the second chapter of part II in \cite{Levy}.
  
  \item
  Let $T_c$ denote a time variable whose increments are associated to pumping fluid at the point $c\in D$. Hadamard's formula yields
  \begin{equation*}
    \frac{\partial g(a,b)}{\partial T_c} = -\int_{\partial D(t)} \partial_n g_a \cdot \partial_n g_b \cdot\partial_n g_c\, ds,
  \end{equation*}
  which is clearly symmetric in the variables $a,b,c$. From here we have a so--called zero--curvature condition
  \begin{equation*}
    \frac{\partial g(a,b)}{\partial T_c} = \frac{\partial g(a,c)}{\partial T_b} = \frac{\partial g(b,c)}{\partial T_a},
  \end{equation*}
  which indicates a commutation of flows across various points of pumping.
  In \cite{Integrable2} the authors use these observations in their discussion of the integrable structure of Laplacian growth;
  from the zero--curvature relation they embed Laplacian growth of a multiply--connected domain
  into a hierarchy known as the Whitman equations.
  
\end{enumerate}

Given a conductor made from a standard material, we presumably know the conductivity function---and hence the
underlying operator---governing the electrostatic potential. If instead we had a material with impurities or an altogether new substance,
the conductivity function would be unknown. Thus for some applications, we might want to approximate the Green function not under
perturbations of the domain, but rather of the operator.
For example, if we turn our attention to inverse problems we might pose a question wherein an unknown operator has a Green function with
a prescribed property. The existence of the underlying operator can be studied by understanding
what sorts of variations in the Green function can result from perturbing a well--understood operator, such as the Laplacian.

Hadamard's theorem on the variation of a domain's Green function is based upon perturbations of the boundary.
Taking a different approach, we examine situations wherein the domain is fixed but rather the underlying operator is close to the Laplacian in various ways. The following variational formulas were derived, discussed and proved in \cite{Martin}.
\begin{theorem}\label{Schrodinger}
  Fix $w\in D$ and define the integral operator
  \begin{equation*}
    T\phi(z) = \int_D \phi g_z\, dA.
  \end{equation*}
  Suppose that $p$ is a smooth scalar function defined in a neighborhood of $D$
  with corresponding multiplication operator $P$.
  The Green function $g_w^*$ of the Schr\"odinger operator $\Delta-\epsilon p$ satisfies
  \begin{equation*}
     g^*_w = g_w + \epsilon TPg_w + o(\epsilon)
  \end{equation*}
  as $\epsilon \to 0$, where the convergence of $o(\epsilon)$ is uniform.
  Furthermore, a full series expansion is given by
  \begin{equation} \label{Schrodinger_series}
    g^*_w = \sum_{n=0}^\infty \epsilon^n (TP)^n g_w.
  \end{equation}
\end{theorem}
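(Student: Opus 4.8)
The plan is to identify $T$ with the Dirichlet solution operator for $\Delta$ on $D$, rewrite the defining problem for $g_w^*$ as a fixed-point equation, and solve that equation by a Neumann series when $\epsilon$ is small.

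First I would write down what $g_w^*$ must satisfy: $(\Delta - \epsilon p) g_w^* = \delta_w$ distributionally in $D$ and $g_w^* = 0$ on $\partial D$. Setting $u := g_w^* - g_w$, which vanishes on $\partial D$, and subtracting $\Delta g_w = \delta_w$, the delta functions cancel and $u$ solves the plain Dirichlet problem $\Delta u = \epsilon p\, g_w^*$, $u|_{\partial D} = 0$. Now observe that because $\Delta_z g(z,\zeta) = \delta_\zeta$ and $g(z,\zeta)$ vanishes for $z \in \partial D$, the operator $T\phi(z) = \int_D \phi\, g_z\, dA$ is exactly $\Delta^{-1}$ under Dirichlet conditions; hence $u = \epsilon T(p g_w^*) = \epsilon TP g_w^*$, so that
\begin{equation*}
  g_w^* = g_w + \epsilon\, TP g_w^*.
\end{equation*}
Formally iterating this identity produces $g_w^* = \sum_{n=0}^\infty \epsilon^n (TP)^n g_w$, and the remaining work is to justify convergence and to check that the sum really is the Green function.

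Next I would record the mapping properties of $T$. Since the kernel $g(z,\zeta)$ has only a logarithmic singularity, $z \mapsto g(z,\cdot)$ is continuous into $L^{q'}(D)$ for every $q' < \infty$, so $T$ carries $L^q(D)$ (any $q>1$) boundedly---indeed compactly---into $C(\overline{D})$; as $p$ is bounded, the multiplication operator $P$ is bounded on $C(\overline{D})$, so $M := \|TP\|_{C(\overline{D}) \to C(\overline{D})} < \infty$. The one point requiring care is that $g_w$ itself is unbounded; however $g_w \in L^q(D)$ for every $q < \infty$, hence $Pg_w \in L^q$ and $TPg_w \in C(\overline{D})$, and then every further iterate $(TP)^n g_w$ with $n \geq 1$ lies in $C(\overline{D})$ with $\|(TP)^n g_w\|_\infty \leq M^{n-1}\|TPg_w\|_\infty$. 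Consequently, for $|\epsilon| < 1/M$ the series $\sum_{n\geq 0}\epsilon^n (TP)^n g_w$ converges in $C(\overline{D})$ after its first term, with tail $\sum_{n\geq 2}$ bounded in sup norm by $\|TPg_w\|_\infty\, |\epsilon|^2 M/(1-|\epsilon|M) = O(\epsilon^2)$ uniformly on $\overline{D}$.

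Finally I would verify that the function $\tilde g := \sum_{n\geq 0}\epsilon^n (TP)^n g_w = (I - \epsilon TP)^{-1} g_w$ is the Green function: by construction it satisfies the fixed-point identity $\tilde g = g_w + \epsilon TP \tilde g$, hence $\Delta \tilde g = \delta_w + \epsilon p\, \tilde g$ distributionally and $\tilde g = 0$ on $\partial D$, so it has the right operator, boundary values, and logarithmic singularity at $w$; uniqueness of the Green function---equivalently, that for small $\epsilon$ the operator $\Delta - \epsilon p$ has no Dirichlet null function, which the construction itself exhibits---forces $\tilde g = g_w^*$. Truncating after two terms gives $g_w^* = g_w + \epsilon TPg_w + o(\epsilon)$ with the $o(\epsilon)$ uniform, by the tail bound above. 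I expect the main obstacle to be the careful handling of the singularity of $g_w$---ensuring the very first application of $T$ already lands in $C(\overline{D})$, and making the distributional cancellation of $\delta_w$ rigorous (e.g.\ by testing against smooth functions, or by working with $g_w^* - g_w$ from the outset)---rather than the Neumann estimate, which is routine once the boundedness of $TP$ is established.
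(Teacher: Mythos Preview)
The paper does not actually prove this theorem in the text; immediately before the statement it says ``The following variational formulas were derived, discussed and proved in \cite{Martin},'' and no argument is given here. So there is no in-paper proof to compare against.

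That said, your proposal is correct and is precisely the standard resolvent/Neumann-series argument one would expect (and almost certainly the one in \cite{Martin}): subtract $g_w$ to cancel the $\delta_w$, recognize the resulting Dirichlet problem $\Delta u=\epsilon p\,g_w^*$ as $u=\epsilon TPg_w^*$, obtain the fixed-point identity $g_w^*=g_w+\epsilon TPg_w^*$, and sum the geometric series in the bounded operator $\epsilon TP$. The mapping properties you cite---that the logarithmic kernel makes $T:L^q(D)\to C(\overline D)$ bounded for any $q>1$, that $g_w\in L^q$ for every finite $q$, and hence that the first iterate already lands in $C(\overline D)$---are correct and dispose of the only real subtlety, namely the unboundedness of $g_w$. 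Your tail estimate gives the uniform $o(\epsilon)$ directly. Nothing is missing.
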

\begin{theorem}\label{Beltrami}
  Fix $w\in D$ and suppose that $p$ is a smooth scalar function in a neighborhood of $D$.
  Given $\epsilon > 0$ we can define $\lambda(z) = 1+\epsilon p(z)$. Then as $\epsilon \to 0$
  the Green function $g^*$ for $L=\nabla \lambda \nabla$ satisfies
  \begin{equation}
    g^*(z,w) = g(z,w) + \epsilon \int_D p(\xi) \nabla g(z,\xi) \cdot \nabla g(\xi,w)\, dA(\xi) + o(\epsilon), \label{Beltrami1}
  \end{equation}
  where all derivatives are with respect to $\xi$. Furthermore, the error term converges uniformly.
  An alternate formula is also true:
  \begin{equation}
    g^*(z,w) = g(z,w) -\epsilon g(z,w)\left(\frac{p(z) + p(w)}{2}\right) + \frac{\epsilon}{2}\int_D g_z g_w \Delta p \, dA + o(\epsilon). \label{Beltrami2}
  \end{equation}
\end{theorem}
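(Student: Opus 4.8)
The plan is to realise $g^*$ as the solution of an exact integral equation coming from the resolvent identity relating $\Delta$ and $L=\nabla\lambda\nabla$, read off the first-order term by a Neumann expansion to obtain \eqref{Beltrami1}, and then convert \eqref{Beltrami1} into \eqref{Beltrami2} by one further integration by parts. Throughout, normalise the Green function of $\Delta$ on $D$ so that $\Delta_z g(z,w)=\delta_w$ with $g(\cdot,w)$ vanishing on $\partial D$, and write $G$ for the associated solution operator.

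First I would record that $\nabla\cdot(\lambda\nabla g^*(\cdot,w))=\delta_w$ forces $\Delta g^*(\cdot,w)=\delta_w-\epsilon\,\nabla\cdot(p\nabla g^*(\cdot,w))$, so $g^*(\cdot,w)-g(\cdot,w)$ solves the Dirichlet problem in $D$ with datum $-\epsilon\,\nabla\cdot(p\nabla g^*(\cdot,w))$ and zero boundary values. Using the representation $G[\nabla\cdot\vec F](z)=-\int_D\nabla_\xi g(z,\xi)\cdot\vec F(\xi)\,dA(\xi)$ of the solution operator (the boundary term vanishing because $g(z,\cdot)=0$ on $\partial D$, and the interior singularities at $\xi=z$ and $\xi=w$ being integrable and handled by the usual excision) this becomes
\begin{equation*}
  g^*(z,w)=g(z,w)+\epsilon\int_D p(\xi)\,\nabla_\xi g(z,\xi)\cdot\nabla_\xi g^*(\xi,w)\,dA(\xi).
\end{equation*}
Writing $\mathcal{K}f(z)=\int_D p(\xi)\,\nabla_\xi g(z,\xi)\cdot\nabla_\xi f(\xi)\,dA(\xi)$---the $\nabla\lambda\nabla$-analogue of the operator $TP$ of Theorem \ref{Schrodinger}---this reads $g^*=g+\epsilon\,\mathcal{K}g^*$, so formally $g^*=\sum_{n\ge 0}\epsilon^n\mathcal{K}^n g$.

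To make the Neumann series legitimate and to control the error I would integrate by parts once more inside $\mathcal{K}$, moving the $\xi$-derivative off $f$: for $f$ vanishing on $\partial D$ one gets $\mathcal{K}f(z)=-p(z)f(z)-\int_D \nabla p(\xi)\cdot\nabla_\xi g(z,\xi)\,f(\xi)\,dA(\xi)$, using $\Delta_\xi g(z,\xi)=\delta_z$. Multiplication by $p$ is bounded on $L^2(D)$ by $\|p\|_\infty$, and the remaining integral operator has kernel dominated by $C(p)\,|z-\xi|^{-1}$, which satisfies Schur's test on the bounded set $D$; hence $\|\mathcal{K}\|_{L^2(D)\to L^2(D)}\le C_0=C_0(p,D)$ \emph{uniformly in $\epsilon$}. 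For $\epsilon<1/C_0$ the series converges in $L^2(D)$ and gives $g^*(\cdot,w)=g(\cdot,w)+\epsilon\,\mathcal{K}g(\cdot,w)+\epsilon^2 r_\epsilon$ with $\|r_\epsilon\|_{L^2(D)}$ bounded uniformly in $\epsilon$, and the first-order term $\mathcal{K}g(\cdot,w)$ is precisely the integral in \eqref{Beltrami1}. The step I expect to be the main obstacle is upgrading this $L^2$ bound on $r_\epsilon$ to the pointwise statement claimed: in two dimensions $g(\cdot,w)\notin H^1$, so there is no one-line energy estimate for $g^*-g$ and the Dirac mass inside $\nabla\cdot(p\nabla g)$ has to be tracked carefully. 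The way through is that $r_\epsilon$ satisfies a uniformly elliptic equation with right-hand side $-\nabla\cdot(p\nabla\,\mathcal{K}g(\cdot,w))$, which is smooth and bounded on compact subsets of $D\setminus\{w\}$; interior elliptic estimates then promote the $L^2$ bound to a locally uniform one there, while near $w$ the only remaining growth is a logarithm whose coefficient is $O(\epsilon^2)$, the leading logarithmic effect of the coefficient change being already captured by $\epsilon\,\mathcal{K}g(\cdot,w)$.

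Finally I would derive \eqref{Beltrami2} from \eqref{Beltrami1}. Fix $z\ne w$, set $g_z=g(z,\cdot)$ and $g_w=g(\cdot,w)$, and apply the polarisation identity $\nabla g_z\cdot\nabla g_w=\tfrac12\bigl(\Delta(g_zg_w)-g_z\Delta g_w-g_w\Delta g_z\bigr)$ together with $\Delta g_w=\delta_w$ and $\Delta g_z=\delta_z$ to obtain
\begin{equation*}
  \int_D p\,\nabla g_z\cdot\nabla g_w\,dA=\tfrac12\int_D p\,\Delta(g_zg_w)\,dA-\tfrac12\,g(z,w)\bigl(p(z)+p(w)\bigr).
\end{equation*}
Green's identity applied to $p$ and $g_zg_w$ carries no boundary contribution, since $g_z=g_w=0$ on $\partial D$ makes both $g_zg_w$ and $\partial_n(g_zg_w)$ vanish there, so $\int_D p\,\Delta(g_zg_w)\,dA=\int_D g_zg_w\,\Delta p\,dA$, the logarithmic singularities at $z$ and $w$ again being removed by excision. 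Substituting this into \eqref{Beltrami1} gives \eqref{Beltrami2}, completing the sketch.
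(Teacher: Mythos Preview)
Your approach is sound and the derivation of \eqref{Beltrami2} from \eqref{Beltrami1} via the product rule $\Delta(g_zg_w)=2\nabla g_z\cdot\nabla g_w+g_z\Delta g_w+g_w\Delta g_z$ and Green's identity is correct and is essentially the only reasonable way to pass between those two formulas. However, the route you take to \eqref{Beltrami1} is genuinely different from the paper's. The paper does not attack the Laplace--Beltrami operator directly; instead it first proves Theorem~\ref{Schrodinger} for the Schr\"odinger operator $\Delta-\epsilon p$ (where the perturbation is zeroth order and the relevant operator $TP$ has the pleasant kernel $p(\xi)g(z,\xi)$, free of gradients), and then invokes the conversion Lemma~\ref{conversion_lemma}. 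Writing $\lambda=1+\epsilon p$, one has $\sqrt{\lambda}=1+\tfrac{\epsilon}{2}p+O(\epsilon^2)$ and $u=\lambda^{-1/2}\Delta\lambda^{1/2}=\tfrac{\epsilon}{2}\Delta p+O(\epsilon^2)$, so the Schr\"odinger expansion with potential $\tfrac12\Delta p$ gives the Green function of $\Delta-u$ directly, and dividing by $\sqrt{\lambda(z)\lambda(w)}=1+\tfrac{\epsilon}{2}(p(z)+p(w))+O(\epsilon^2)$ yields \eqref{Beltrami2} immediately; \eqref{Beltrami1} then follows by the same integration by parts you perform.

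The trade-off is exactly the one you identify as your main obstacle. Your operator $\mathcal{K}$ carries a gradient on $f$, so boundedness requires the extra integration by parts and Schur estimate you supply, and the uniform-in-$z$ control of the remainder needs the interior elliptic estimates you sketch. The paper's detour through Schr\"odinger avoids this: the operator $TP$ is manifestly bounded (its kernel $p(\xi)g(z,\xi)$ is only logarithmically singular), the full series \eqref{Schrodinger_series} converges uniformly without difficulty, and the uniform error then survives the algebraic division by $\sqrt{\lambda(z)\lambda(w)}$. Your direct argument is more self-contained and makes the resolvent structure transparent, at the cost of a harder analytic step; the paper's argument offloads that step onto the simpler Schr\"odinger case and a pointwise identity.
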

The second theorem was derived from the first one via a lemma which will be helpful to us in our discussions below.
\begin{lemma}[Converting Laplace--Beltrami to Schr\"odinger]  \label{conversion_lemma}
  Let $\lambda > 0$ be a smooth scalar function in a neighborhood of $D$ and fix $w\in D$.
  Define the function $u=\lambda^{-1/2}\Delta\lambda^{1/2}$.
  If $g_w$ denotes the Green function $\nabla\lambda\nabla$, then for $z\in \overline{D}$ the function
  \begin{equation*}
    G_w(z) = g_w(z) \sqrt{\lambda(w)\cdot\lambda(z)}.
  \end{equation*}
  is a Green function for $\Delta - u$.
\end{lemma}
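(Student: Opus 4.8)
The plan is to deduce the lemma from a single conjugation identity relating $L = \nabla\lambda\nabla$ to $\Delta - u$, and then transport the two defining properties of a Green function (the distributional equation and the boundary vanishing) across that identity. Write $\phi = \lambda^{1/2}$, which is positive and smooth on a neighborhood of $\overline{D}$, so that $u = \phi^{-1}\Delta\phi$. The first and essentially only computational step is to check the pointwise identity
\begin{equation*}
  (\Delta - u)(\phi h) = \phi^{-1} L h
\end{equation*}
for every smooth $h$. This is just the product rule: expanding $\Delta(\phi h) = \phi\Delta h + 2\nabla\phi\cdot\nabla h + h\Delta\phi$, the term $h\Delta\phi$ is cancelled by $-u\,\phi h = -h\Delta\phi$, while expanding $Lh = \phi^2\Delta h + 2\phi\,\nabla\phi\cdot\nabla h$ (using $\lambda = \phi^2$) and dividing by $\phi$ produces the same remaining expression $\phi\Delta h + 2\nabla\phi\cdot\nabla h$. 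Equivalently, this says $\Delta - u = M_{\phi^{-1}}\, L\, M_{\phi^{-1}}$, where $M_{\phi^{-1}}$ denotes multiplication by $\phi^{-1}$; this is the structural fact underlying the lemma, and I would record it once at the outset.

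Next I would apply the identity with $h = g_w$, the Green function of $L$, which by definition satisfies $L g_w = \delta_w$ in $D$ and $g_w = 0$ on $\partial D$. This gives $(\Delta - u)(\phi\, g_w) = \phi^{-1}\delta_w$. Since $\delta_w$ is supported at the single point $z = w$, pairing against test functions shows that multiplication by the continuous function $\phi^{-1}$ only registers its value there, so $\phi^{-1}\delta_w = \phi(w)^{-1}\delta_w$. Multiplying both sides by the constant $\phi(w)$ then yields $(\Delta - u)\big(\phi(w)\,\phi(z)\,g_w(z)\big) = \delta_w$, and since $\phi(w)\phi(z) g_w(z) = \sqrt{\lambda(w)\lambda(z)}\,g_w(z) = G_w(z)$, this is precisely $(\Delta - u)G_w = \delta_w$ in $D$.

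Finally, the boundary condition is immediate: $\phi$ is finite and continuous up to $\partial D$ and $g_w$ vanishes there, so $G_w = 0$ on $\partial D$, which together with the distributional equation characterizes $G_w$ as the Green function of $\Delta - u$ on $D$. As a consistency check one can verify the singularity normalization directly---near $z = w$ one has $g_w(z) = \frac{1}{2\pi\lambda(w)}\ln|z-w| + O(1)$ since $L$ agrees with $\lambda(w)\Delta$ to leading order at $w$, and multiplying by $\phi(w)\phi(z)\to\lambda(w)$ restores the standard leading term $\frac{1}{2\pi}\ln|z-w|$---but this is already forced by the equation just derived. I do not expect a genuine obstacle; the only points demanding a little care are the distributional identity $\phi^{-1}\delta_w = \phi(w)^{-1}\delta_w$ and the observation that it is exactly the hypothesis that $\lambda$ is positive and smooth near $\overline{D}$ which makes $\phi^{\pm 1}$ legitimate multipliers throughout.
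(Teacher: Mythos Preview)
Your argument is correct: the conjugation identity $(\Delta - u)\circ M_\phi = M_{\phi^{-1}}\circ L$ with $\phi=\lambda^{1/2}$ is exactly the mechanism behind the lemma, and it extends from smooth functions to distributions because both sides are second--order differential operators with smooth coefficients (equivalently, $M_\phi(\Delta-u)M_\phi = L$ as operators), so applying it to $g_w$ is legitimate. The paper does not actually prove this lemma here---it is quoted from \cite{Martin}---but the approach you outline is the standard one and is almost certainly what appears there as well; there is nothing to correct.
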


Given a Green function on a domain $D$, the outward normal derivative on $\partial D$ is of great importance,
both for boundary--value problems on $D$ as well as Laplacian growth. The normal derivative of $g_z$ is not much easier
to compute than $g_z$ itself, so we consider variational formulas for the derivative as we did for the Green function itself.
The following lemma was used in \cite{Martin2} for this purpose:

\begin{lemma}\label{NormalLemma}
  Let $D$ be a bounded domain in $\C$ with $C^1$ boundary and $f\in C^1(\overline{D})\cap C^2(D)$.
  and suppose further that $f=0$ on $\partial D$. Then for $\zeta\in \partial D$ we have
  \begin{equation*}
    \frac{\partial f}{\partial n}(\zeta) = \int_D \Delta f(z)P(z,\zeta)\, dA(z),
  \end{equation*}
  where $P(z,\zeta) = \partial_n g_z(\zeta)$ is the Poisson kernel of $D$.
\end{lemma}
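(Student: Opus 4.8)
The plan is to use Green's second identity on $D$, pairing $f$ against the Green function $g_z$ for a variable interior point $z$, and then to differentiate in the position of that point. First I would fix $\zeta\in\partial D$ and recall that, by definition, $g_z$ vanishes on $\partial D$ and $\Delta g_z = \delta_z$ in $D$ (with the usual normalization so that $g_z(w)=(2\pi)^{-1}\ln|w-z|+\text{harmonic}$). Writing Green's identity for the pair $(f,g_z)$,
\begin{equation*}
  \int_D \big(f\,\Delta g_z - g_z\,\Delta f\big)\,dA = \int_{\partial D}\Big(f\,\partial_n g_z - g_z\,\partial_n f\Big)\,ds.
\end{equation*}
Both $f$ and $g_z$ vanish on $\partial D$, so the entire right-hand side is zero; the term $\int_D f\,\Delta g_z\,dA$ evaluates to $f(z)$ by the defining property of the Green function. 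This yields the representation
\begin{equation*}
  f(z) = \int_D g_z(w)\,\Delta f(w)\,dA(w)\qquad(z\in D),
\end{equation*}
which is just the statement that $f$ is the Newtonian-type potential of its own Laplacian with the $D$-Green kernel. A small care point: since $f\in C^1(\overline D)$ but $\Delta f$ is only assumed to exist in $D$, the integral should first be justified on compactly contained subdomains and then extended by a limiting argument using the integrability of $g_z$ near the boundary and the boundedness hypotheses on $f$.

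Next I would take the outward normal derivative of both sides at the boundary point $\zeta$. On the left, $\partial_n f(\zeta)$ is exactly the quantity we want. On the right, I would like to move the derivative inside the integral, obtaining
\begin{equation*}
  \partial_n f(\zeta) = \int_D \big(\partial_{n_\zeta} g_z(\zeta)\big)\,\Delta f(z)\,dA(z) = \int_D P(z,\zeta)\,\Delta f(z)\,dA(z),
\end{equation*}
using the standard fact that the Poisson kernel of $D$ is $P(z,\zeta)=\partial_{n_\zeta} g_z(\zeta)$ (the normal derivative of the Green function on the boundary), together with the symmetry $g_z(\zeta)=g_\zeta(z)$ of the Green function. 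This is the claimed formula.

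The main obstacle is rigorously justifying the interchange of the boundary normal derivative with the area integral, since $g_z(\cdot)$ is singular at $z$ and the kernel $\partial_{n_\zeta}g_z(\zeta)$ must be controlled uniformly as $z$ ranges over $D$, including $z$ near $\partial D$. The plan to handle this is to split the integral into a piece over $D_\delta=\{z\in D:\operatorname{dist}(z,\zeta)>\delta\}$, where the integrand and its $\zeta$-derivative are smooth and bounded so differentiation under the integral is elementary, plus a remainder over a small half-ball near $\zeta$. On that remainder one uses the local comparison of $g_z$ with the free-space logarithm and the $C^1$ regularity of $\partial D$ to get a bound like $\int |\partial_{n_\zeta} g_z(\zeta)|\,dA(z)=O(1)$ as the ball shrinks — indeed $P(z,\zeta)$ is a positive kernel whose total mass against harmonic functions is controlled, reflecting that $\int_{\partial D}P(z,\zeta)\,ds(\zeta)=1$ — and combines this with continuity of $\Delta f$ (or merely its local boundedness and the dominated convergence theorem) to show the remainder's contribution vanishes and the differentiation is valid in the limit. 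An alternative, cleaner route that sidesteps differentiating a singular kernel is to apply Green's identity directly to the pair $(f,\,\partial_n g_\zeta)$-type object, i.e. to use Hadamard-style reciprocity: test $f$ against the harmonic function $h_\zeta(z):=P(z,\zeta)$, which is harmonic in $D$ and has boundary values $\delta_\zeta$ in the distributional sense, so that $\int_D \Delta f\cdot h_\zeta\,dA = -\int_{\partial D} f\,\partial_n h_\zeta\,ds + \int_{\partial D}\partial_n f\cdot h_\zeta\,ds = \partial_n f(\zeta)$, the first boundary term dropping because $f|_{\partial D}=0$ and the second collapsing to the point value by the boundary behavior of the Poisson kernel; making this precise again requires the same boundary estimates but organizes them around the harmonicity of $h_\zeta$ rather than the singularity of $g_z$.
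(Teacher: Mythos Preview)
The paper does not actually prove this lemma; it is quoted from \cite{Martin2} and used as a tool for the subsequent normal--derivative variation theorems. There is therefore no in--paper proof to compare against.

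That said, your argument is the standard and correct one. The representation $f(z)=\int_D g(z,w)\,\Delta f(w)\,dA(w)$ follows from Green's identity and $f|_{\partial D}=0$, and differentiating in $z$ at a boundary point together with the symmetry $g(z,w)=g(w,z)$ gives exactly $\partial_{n_\zeta}g(\zeta,w)=\partial_n g_w(\zeta)=P(w,\zeta)$. You correctly identify the only real issue, namely justifying the exchange of $\partial_{n_\zeta}$ with the area integral near the boundary singularity; your splitting/estimate plan is the right shape, though in practice one usually needs a bit more than $C^1$ boundary to get uniform control of $P(z,\zeta)$ (indeed the applications in the paper assume smooth analytic boundary). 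Your alternative route---pairing $f$ directly with the harmonic function $P(\cdot,\zeta)$ in Green's identity---is also valid and is arguably the cleaner way to organize the same estimate, since it avoids differentiating a singular kernel and instead reduces to the known boundary behavior of the Poisson kernel.
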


Application of this lemma to various Green functions required that they extend across the boundary continuously. Therefore
the variational formulas below all require $\partial D$ to be smooth analytic, though perhaps this restriction could be relaxed
via other methods. These theorems use the (unusual) notation
\begin{equation*}
  P_\zeta(z) = P(\zeta,z) = \partial_n g_z(\zeta)
\end{equation*}
for the Poisson kernel of the domain, emphasizing the single variable dependence $z\mapsto P(\zeta,z)$.
\begin{theorem}\label{NormalTheorem}
  Fix $w\in D$, a bounded domain in $\C$ with smooth analytic boundary.
  Suppose that $u\in C^\infty(\overline{D})$ is a positive function.
  The outward normal derivative of the Green function $g^*$ of the Schr\"odinger operator $\Delta-\epsilon u$ satisfies
  \begin{equation*}
     \partial_n g^*_w(\zeta) = \partial_n g_w(\zeta) + \epsilon \int_D u g_wP_\zeta\, dA + o(\epsilon)
  \end{equation*}
  as $\epsilon \to 0$, where the convergence of $o(\epsilon)$ is uniform in $\zeta$.
\end{theorem}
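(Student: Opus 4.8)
The plan is to apply Lemma~\ref{NormalLemma} to the difference $h := g_w^* - g_w$ and then feed in the first--order expansion of $g_w^*$ supplied by Theorem~\ref{Schrodinger}. First I would record the defining equations: $\Delta g_w = \delta_w$ and $(\Delta - \epsilon u)g_w^* = \delta_w$ in $D$, with $g_w = g_w^* = 0$ on $\partial D$. Subtracting, $h$ satisfies $\Delta h = \epsilon u\, g_w^*$ in $D$ and $h = 0$ on $\partial D$. Since $\partial D$ is smooth analytic and $u \in C^\infty(\overline D)$, elliptic regularity up to the boundary (the same input used for $g_w^*$ itself in \cite{Martin2}) gives $g_w, g_w^* \in C^\infty(\overline D \setminus \{w\})$, so $h$ is smooth away from $w$ and lies in $C^1(\overline D)$; near $w$ the right--hand side $\epsilon u g_w^*$ has only a logarithmic singularity, hence $\Delta h \in L^p(D)$ for every $p < \infty$ and $h \in W^{2,p}_{\mathrm{loc}}$ there. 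This falls just short of the hypothesis $f \in C^2(D)$ of Lemma~\ref{NormalLemma}, but that lemma is proved via Green's second identity, which stays valid for $f \in C^1(\overline D)$ with $\Delta f \in L^1(D)$; equivalently one excises a disk $B_\delta(w)$, applies the lemma on $D \setminus \overline{B_\delta(w)}$, and lets $\delta \to 0$, the contribution of $\partial B_\delta(w)$ vanishing because $h$ is bounded and $C^1$ there. I would state this mild extension explicitly as a preliminary remark.

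With that in hand, applying Lemma~\ref{NormalLemma} to $h$ yields, for $\zeta \in \partial D$,
\[ \partial_n h(\zeta) = \int_D \Delta h(z)\, P_\zeta(z)\, dA(z) = \epsilon \int_D u(z)\, g_w^*(z)\, P_\zeta(z)\, dA(z), \]
and since $\partial_n g_w^* = \partial_n g_w + \partial_n h$ it remains only to replace $g_w^*$ by $g_w$ in the last integral at the cost of $o(\epsilon)$. By Theorem~\ref{Schrodinger} the convergence $g_w^* \to g_w$ is uniform on $\overline D$, with $\|g_w^* - g_w\|_{L^\infty(D)} = O(\epsilon)$ (the first--order term $\epsilon\, TPg_w$ being a fixed bounded function, as established in \cite{Martin}). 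Hence
\[ \epsilon \int_D u\, g_w^*\, P_\zeta\, dA = \epsilon \int_D u\, g_w\, P_\zeta\, dA + \epsilon \int_D u\,(g_w^* - g_w)\, P_\zeta\, dA, \]
and the last term is bounded in absolute value by $\epsilon\, \|u\|_{L^\infty(D)}\, \|g_w^* - g_w\|_{L^\infty(D)} \int_D P_\zeta(z)\, dA(z)$.

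The one point genuinely needing care --- and what I expect to be the crux of making the $o(\epsilon)$ \emph{uniform} in $\zeta$ --- is the uniform bound $M := \sup_{\zeta \in \partial D} \int_D P_\zeta(z)\, dA(z) < \infty$. This is clean: let $f_0 \in C^\infty(\overline D)$ solve $\Delta f_0 = 1$ in $D$, $f_0 = 0$ on $\partial D$ (solvability and boundary regularity again from the smoothness of $\partial D$); applying Lemma~\ref{NormalLemma} to $f_0$ gives $\int_D P_\zeta(z)\, dA(z) = \partial_n f_0(\zeta)$, a continuous, hence bounded, function of $\zeta$. (On the unit disk $P_\zeta \ge 0$ is the usual Poisson kernel and $f_0(z) = \frac{1}{4}(|z|^2 - 1)$, so $M = \frac{1}{2}$.) With this the last displayed term is $O(\epsilon^2)$ uniformly in $\zeta$, and combining the two displays yields
\[ \partial_n g_w^*(\zeta) = \partial_n g_w(\zeta) + \epsilon \int_D u\, g_w\, P_\zeta\, dA + o(\epsilon) \]
uniformly in $\zeta \in \partial D$, which is the assertion. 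In short, the only real obstacles are the up--to--the--boundary regularity of $g_w^*$ (dispatched by the analytic--boundary hypothesis and standard elliptic estimates) and the slight strengthening of Lemma~\ref{NormalLemma} that absorbs the interior logarithmic singularity; the rest is a direct computation from the two defining PDEs and the uniform expansion of Theorem~\ref{Schrodinger}.
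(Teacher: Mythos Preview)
Your argument is correct and is precisely the route the paper sets up: it introduces Lemma~\ref{NormalLemma} expressly ``for this purpose'' and then quotes Theorem~\ref{NormalTheorem} from \cite{Martin2} without reproducing the proof here, so there is no in--paper proof to compare against beyond that scaffolding. Applying Lemma~\ref{NormalLemma} to $h=g_w^*-g_w$, using $\Delta h=\epsilon u g_w^*$, and then substituting the uniform expansion of Theorem~\ref{Schrodinger} is exactly the intended mechanism; your handling of the two technical points (the mild extension of Lemma~\ref{NormalLemma} to absorb the logarithmic singularity of $\Delta h$ at $w$, and the uniform bound $\sup_\zeta\int_D P_\zeta\,dA=\partial_n f_0(\zeta)$ via the torsion function $f_0$) is clean and suffices for the uniform $o(\epsilon)$.
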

\begin{theorem}
  Fix $w\in D$, a bounded domain in $\C$ with smooth analytic boundary.
  Suppose that $u\in C^\infty(\overline{D})$ is a positive function and define $\lambda(z) = 1+\epsilon u(z)$ for $\epsilon > 0$.
  The outward normal derivative of the Green function $g^*$ of the Laplace--Beltrami operator $L=\nabla\lambda\nabla$ satisfies
  \begin{equation*}
     \partial_n g^*_w(\zeta) =
     \partial_n g_w(\zeta) + \frac{\epsilon}{2}\left[\int_D \Delta u g_w P_\zeta\, dA - \partial_n g_w(\zeta)\left[u(\zeta)+u(w)\right]
      \right] + o(\epsilon)
  \end{equation*}
  as $\epsilon \to 0$, where the convergence of $o(\epsilon)$ is uniform in $\zeta$.
\end{theorem}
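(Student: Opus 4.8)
The plan is to combine the two previously established pieces of machinery: Theorem~\ref{NormalTheorem}, which gives the first-order variation of $\partial_n g^*_w$ for the Schr\"odinger operator $\Delta - \epsilon u$, and the conversion Lemma~\ref{conversion_lemma}, which expresses the Laplace--Beltrami Green function in terms of a Schr\"odinger Green function. First I would set $\lambda(z) = 1 + \epsilon u(z)$ and compute the potential $v = \lambda^{-1/2}\Delta \lambda^{1/2}$ appearing in the lemma to leading order in $\epsilon$: since $\lambda^{1/2} = 1 + \tfrac{\epsilon}{2}u + O(\epsilon^2)$, we get $\Delta \lambda^{1/2} = \tfrac{\epsilon}{2}\Delta u + O(\epsilon^2)$ and $\lambda^{-1/2} = 1 + O(\epsilon)$, so $v = \tfrac{\epsilon}{2}\Delta u + O(\epsilon^2)$. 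Thus the Laplace--Beltrami operator $\nabla\lambda\nabla$ is, after the conformal-type change of dependent variable in Lemma~\ref{conversion_lemma}, a Schr\"odinger operator $\Delta - v$ whose potential is $\tfrac{\epsilon}{2}\Delta u$ to first order.

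Next I would invoke Lemma~\ref{conversion_lemma} to write $G_w(z) = g^*(z,w)\sqrt{\lambda(w)\lambda(z)}$, where $G_w$ is the Green function of $\Delta - v$ and $g^*$ is the Laplace--Beltrami Green function we want. Solving for $g^*(z,w) = G_w(z)\,\lambda(w)^{-1/2}\lambda(z)^{-1/2}$ and taking the outward normal derivative at $\zeta\in\partial D$, I would use the product rule: because $g^*_w$ vanishes on $\partial D$, and $G_w$ likewise vanishes on $\partial D$ (it is a Green function of $D$), the tangential/zeroth-order terms drop and we are left essentially with
\begin{equation*}
  \partial_n g^*_w(\zeta) = \lambda(w)^{-1/2}\lambda(\zeta)^{-1/2}\,\partial_n G_w(\zeta).
\end{equation*}
Then I would apply Theorem~\ref{NormalTheorem} with the potential $v = \tfrac{\epsilon}{2}\Delta u + o(\epsilon)$ in place of $\epsilon u$, which gives
\begin{equation*}
  \partial_n G_w(\zeta) = \partial_n g_w(\zeta) + \frac{\epsilon}{2}\int_D \Delta u\, g_w P_\zeta\, dA + o(\epsilon),
\end{equation*}
using that the unperturbed Schr\"odinger Green function ($\epsilon = 0$) is just $g$. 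Finally I would expand the prefactor $\lambda(w)^{-1/2}\lambda(\zeta)^{-1/2} = 1 - \tfrac{\epsilon}{2}[u(w) + u(\zeta)] + O(\epsilon^2)$, multiply the two expansions, and collect the $\epsilon^1$ terms to obtain exactly the claimed formula.

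The main obstacle I anticipate is justifying the boundary-derivative step rigorously: Theorem~\ref{NormalTheorem} is stated for a potential of the form $\epsilon u$ with $u$ fixed, whereas here the effective potential $v$ itself depends on $\epsilon$ (both through the $\tfrac{\epsilon}{2}\Delta u$ term and through the higher-order corrections in $\lambda^{-1/2}\Delta\lambda^{1/2}$). I would need to check that the uniform $o(\epsilon)$ control in Theorem~\ref{NormalTheorem} is robust enough to absorb an $\epsilon$-dependent perturbation of this type, or else re-run the proof of Theorem~\ref{NormalTheorem} (via Lemma~\ref{NormalLemma} applied to $g^* - g$) directly with the potential $v$; this uses the smooth analytic boundary hypothesis to guarantee the relevant Green functions extend across $\partial D$ so that Lemma~\ref{NormalLemma} applies. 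A secondary, more routine point is confirming that the product-rule computation of $\partial_n g^*_w$ genuinely kills all terms involving derivatives of $\sqrt{\lambda}$ hitting the boundary values of $G_w$, which follows since $G_w\big|_{\partial D} = 0$, leaving only the normal derivative of $G_w$ scaled by the (smooth, nonvanishing) factor $\lambda(w)^{-1/2}\lambda(\zeta)^{-1/2}$.
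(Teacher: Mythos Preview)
Your proposal is correct and follows precisely the approach the paper indicates: the paper states this theorem without proof (citing \cite{Martin2}), but its surrounding discussion makes clear that the Laplace--Beltrami results are obtained from the Schr\"odinger results via Lemma~\ref{conversion_lemma}, exactly as you do---expand $\lambda^{-1/2}\Delta\lambda^{1/2}=\tfrac{\epsilon}{2}\Delta u+O(\epsilon^2)$, apply Theorem~\ref{NormalTheorem} to get $\partial_n G_w$, and then multiply by $\lambda(w)^{-1/2}\lambda(\zeta)^{-1/2}=1-\tfrac{\epsilon}{2}[u(w)+u(\zeta)]+O(\epsilon^2)$, using $G_w|_{\partial D}=0$ to kill the cross term. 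The technical caveat you flag (the effective Schr\"odinger potential depends on $\epsilon$ and need not be positive) is real but minor: as you note, one can simply rerun the argument behind Theorem~\ref{NormalTheorem} via Lemma~\ref{NormalLemma} with this specific potential, and the smooth analytic boundary hypothesis is present for exactly that purpose.
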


\section{Application to Boundary--Value Problems}

\subsection{The Dirichlet Problem for Schr\"odinger}
Before we turn to elliptic growth, we briefly consider an elementary application of the Green variation which has not yet
appeared in the literature.
Consider solving the Dirichlet problem on a domain $D$:
\begin{equation} \label{Dirichlet}
  \begin{cases}
    (\Delta-\epsilon u) \phi_\epsilon = 0 & \textrm{ in } D \\
    \phi_\epsilon = f &\textrm{ on } \partial D
  \end{cases}
\end{equation}
where $f$ is a bounded Borel function, $u\geq 0$ is smooth in a neighborhood of $\overline{D}$
and $\epsilon \geq 0$ is small. If we ignored the $\epsilon u$ term
in the problem---that is, assume $\epsilon$ is zero---we can estimate the error with the following theorem and corollary.
\begin{theorem}
  Let $D\subset \C$ be a bounded domain with smooth, analytic boundary and suppose $u\in C^\infty(\overline{D})$ is a positive function.
  For $\epsilon \geq 0$ and a bounded Borel function $f$ the solution $\phi_\epsilon$ to the Dirichlet problem \eqref{Dirichlet} satisfies
  \begin{equation*}
    \phi_\epsilon(z) = \phi_0(z) + \epsilon \int_D u \phi_0 g_z\, dA + o(\epsilon)
  \end{equation*}
  as $\epsilon \to 0$, where the error term converges uniformly in $z$.
\end{theorem}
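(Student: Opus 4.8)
The plan is to reduce this Dirichlet perturbation result to the Green function variation already recorded in Theorem~\ref{Schrodinger}, using the Poisson-kernel representation of solutions to the Dirichlet problem. First I would write $\phi_\epsilon$ via the Poisson kernel $P^\epsilon$ of the operator $\Delta - \epsilon u$ on $D$: since $\phi_\epsilon$ solves $(\Delta-\epsilon u)\phi_\epsilon=0$ with boundary data $f$, we have $\phi_\epsilon(z) = \int_{\partial D} f(\zeta)\,P^\epsilon(z,\zeta)\,ds(\zeta)$, where $P^\epsilon(z,\zeta) = \partial_{n_\zeta} g^\epsilon_z(\zeta)$ and $g^\epsilon$ is the Green function of $\Delta - \epsilon u$. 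For $\epsilon = 0$ this is the classical Poisson representation $\phi_0(z) = \int_{\partial D} f\,P_z\,ds$. The whole problem thus becomes one of differentiating $P^\epsilon$ in $\epsilon$ uniformly, and Theorem~\ref{NormalTheorem} gives exactly that: $\partial_n g^\epsilon_z(\zeta) = \partial_n g_z(\zeta) + \epsilon\int_D u\, g_z P_\zeta\,dA + o(\epsilon)$, uniformly in $\zeta$ (and, tracking the proof, uniformly in $z$ on compact subsets as well).

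Next I would substitute this expansion into the Poisson integral. Writing $P_\zeta(z) = P(\zeta,z) = \partial_n g_z(\zeta)$ in the paper's notation, the first-order term contributes
\begin{equation*}
  \epsilon \int_{\partial D} f(\zeta)\left[\int_D u(\xi)\, g_\xi(z)\, P_\zeta(\xi)\, dA(\xi)\right] ds(\zeta).
\end{equation*}
Here I use the symmetry $g_z(\xi) = g_\xi(z)$ of the Green function. Applying Fubini to interchange the boundary integral and the area integral, the inner boundary integral becomes $\int_{\partial D} f(\zeta)\, P_\zeta(\xi)\, ds(\zeta) = \int_{\partial D} f(\zeta)\, P(\zeta,\xi)\, ds(\zeta) = \phi_0(\xi)$, again by the Poisson representation of $\phi_0$. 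Hence the first-order correction collapses to $\epsilon \int_D u(\xi)\,\phi_0(\xi)\, g_\xi(z)\, dA(\xi) = \epsilon \int_D u\,\phi_0\, g_z\, dA$, which is precisely the claimed term. The remaining $o(\epsilon)$ from Theorem~\ref{NormalTheorem}, being uniform in $\zeta$, integrates against the bounded data $f$ over the finite-length boundary to give an $o(\epsilon)$ term, uniform in $z$; this handles the error bookkeeping.

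The main obstacle I anticipate is justifying the Poisson-kernel representation $\phi_\epsilon(z) = \int_{\partial D} f\, P^\epsilon(z,\cdot)\, ds$ for a merely bounded Borel $f$, together with the uniform-in-$z$ control needed to conclude. For smooth $f$ this is standard, but for bounded Borel data one must argue that $P^\epsilon(z,\zeta)$ is a genuine (positive, integrable) kernel reproducing the generalized solution of \eqref{Dirichlet}; since $\Delta - \epsilon u$ with $u\geq 0$ is a nonnegative Schrödinger operator on a bounded domain with smooth analytic boundary, $g^\epsilon$ exists, is positive, and its normal derivative is a positive kernel with $\int_{\partial D} P^\epsilon(z,\zeta)\,ds(\zeta) \le 1$ (this follows from applying the representation to $f\equiv 1$, whose solution $\le 1$ by the maximum principle), so dominated convergence and Fubini are legitimate. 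A secondary technical point is upgrading the uniformity in Theorem~\ref{NormalTheorem} from ``uniform in $\zeta$'' to joint control as $z$ ranges over $D$: one restricts attention to the behavior near $\partial D$ where $g_z(\xi)$ and $P_\zeta(\xi)$ are bounded uniformly, or equivalently one notes the error term in the series \eqref{Schrodinger_series} is controlled by a geometric series in $\epsilon \|TP\|$, which is uniform. Once these representation and uniformity facts are in hand, the argument is the short substitution-and-Fubini computation sketched above.
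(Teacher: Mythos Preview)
Your argument is correct, but it takes a different route from the paper. The paper avoids the Poisson kernel entirely: it observes directly that $(\Delta-\epsilon u)(\phi_\epsilon-\phi_0)=\epsilon u\phi_0$ in $D$ with zero boundary data, so $\phi_\epsilon-\phi_0 = \epsilon\int_D u\phi_0 g^*_z\,dA$ where $g^*$ is the Green function of $\Delta-\epsilon u$, and then invokes only Theorem~\ref{Schrodinger} (the variation of $g^*$ itself) to replace $g^*_z$ by $g_z$ up to $O(\epsilon)$. Your approach instead differentiates the Poisson representation, which requires the stronger Theorem~\ref{NormalTheorem} on the variation of $\partial_n g^*$, plus a Fubini step and the justification of the Poisson integral for the Schr\"odinger problem with merely bounded Borel data. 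Both arguments yield the same first-order term; the paper's is shorter and uses less machinery, while yours makes the dependence on boundary data more transparent and would generalize more readily if one wanted higher-order corrections expressed via boundary integrals. The uniformity-in-$z$ caveat you flag is real but is equally present (and equally glossed over) in the paper's own proof, so it is not a defect of your approach specifically.
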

\begin{proof}
  Let $g^*_w$ denote the Green function of the operator $\Delta-\epsilon u$ in the region $D$.
  Note that
  \begin{equation*}
    \begin{cases}
      (\Delta-\epsilon u)(\phi_\epsilon - \phi_0) = \epsilon u \phi_0 & \textrm{ in } D \\
      \phi_\epsilon - \phi_0 = 0 & \textrm{ on } \partial D,
    \end{cases}
  \end{equation*}
  whence an expression for $\phi_\epsilon$
  is given by
  \begin{equation*}
    \phi_\epsilon(z) = \phi_0(z) + \epsilon\int_D u\phi_0 g^*_z\, dA.
  \end{equation*}
  From the perturbation formula \eqref{Schrodinger_series} we have
  \begin{equation*}
    \phi_\epsilon(z) = \phi_0(z) + \epsilon\int_D u\phi_0 g_z\, dA + o(\epsilon),
  \end{equation*}
  as desired.
\end{proof}
\begin{corollary}
  With the same assumptions as the previous theorem, the linearization of $\phi_\epsilon$ has the pointwise bound
  \begin{equation*}
    |\delta\phi_\epsilon(z)| \leq \|u\|_2\|g_z\|_2 \|f\|_\infty.
  \end{equation*}
\end{corollary}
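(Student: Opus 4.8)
The plan is to bound the linearization $\delta\phi_\epsilon(z)$, which by the preceding theorem equals $\int_D u\,\phi_0\,g_z\,dA$, directly by the Cauchy--Schwarz inequality. First I would observe that for $\epsilon=0$ the Dirichlet problem \eqref{Dirichlet} is the classical Laplace equation with boundary data $f$, so $\phi_0$ is the harmonic extension of $f$ into $D$; by the maximum principle $\|\phi_0\|_\infty \le \|f\|_\infty$. Next, writing the linearization as an $L^2$ pairing,
\begin{equation*}
  |\delta\phi_\epsilon(z)| = \left|\int_D u\,\phi_0\,g_z\,dA\right|
    \le \|u\,\phi_0\|_2\,\|g_z\|_2
    \le \|\phi_0\|_\infty\,\|u\|_2\,\|g_z\|_2,
\end{equation*}
where in the last step I pull the sup-norm bound on $\phi_0$ out of the $L^2$ norm of the product $u\phi_0$. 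Combining with $\|\phi_0\|_\infty \le \|f\|_\infty$ gives the claimed estimate $|\delta\phi_\epsilon(z)| \le \|u\|_2\,\|g_z\|_2\,\|f\|_\infty$.

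The only points requiring a little care are the finiteness of the quantities involved. The function $u$ is smooth on the compact set $\overline{D}$, so $\|u\|_2 < \infty$ automatically. The Green function $g_z$ has a logarithmic singularity at $z$, but $\ln|z-w|$ is square-integrable near its singularity in two dimensions (since $\int_0^1 (\ln r)^2 r\,dr < \infty$), and $g_z$ vanishes on $\partial D$ and is otherwise bounded away from $z$, so $\|g_z\|_2 < \infty$ for each fixed $z \in D$; this also explains why the bound is stated pointwise in $z$ rather than uniformly. Since $f$ is assumed bounded Borel, $\|f\|_\infty < \infty$ as well.

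I do not anticipate a genuine obstacle here: the corollary is an immediate consequence of the integral representation of $\delta\phi_\epsilon$ furnished by the theorem, together with Cauchy--Schwarz and the maximum principle for harmonic functions. The only thing to state explicitly is the identification of $\phi_0$ with the harmonic extension of the boundary data and the resulting $\|\phi_0\|_\infty \le \|f\|_\infty$, which is what converts the $L^2$-norm of $u\phi_0$ into $\|u\|_2\|f\|_\infty$.
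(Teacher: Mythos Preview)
Your proof is correct and follows exactly the paper's approach: the maximum principle gives $\|\phi_0\|_\infty \le \|f\|_\infty$, and then Cauchy--Schwarz applied to $\int_D u\phi_0 g_z\,dA$ finishes the estimate. The additional remarks on finiteness of $\|u\|_2$ and $\|g_z\|_2$ are helpful elaborations but not strictly needed for the argument.
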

\begin{proof}
  From the maximum modulus principle for harmonic functions, $\sup_D |\phi_0| \leq \sup_{\partial D} |f|$. The result follows from this and the
  Cauchy-Schwarz inequality.
\end{proof}
The following lemma will ease the computation in the examples below.
\begin{lemma}\label{Green_area}
  Given a point $z\in \D$ and an integer $n\geq 0$ we have
  \begin{equation*}
    \int_\D |\xi|^{2n} g_z(\xi)\, dA(\xi) = -\frac{1-|z|^{2n+2}}{4(n+1)^2},
  \end{equation*}
  where $g_z$ denotes the Green function of the Laplacian on $\D$ with singularity at $z$.
\end{lemma}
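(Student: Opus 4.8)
The plan is to compute the integral directly using the explicit form of the Green function on the unit disk, namely
\begin{equation*}
  g_z(\xi) = \frac{1}{2\pi}\ln\left|\frac{\xi - z}{1 - \bar z \xi}\right|,
\end{equation*}
and to exploit rotational considerations together with the mean value property. First I would fix $z \in \D$ and consider the auxiliary function $h(z) = \int_\D |\xi|^{2n} g_z(\xi)\, dA(\xi)$, viewed as a function of $z$. Since $\Delta_z g_z(\xi) = \delta_\xi$ in the appropriate distributional sense (the Green function is symmetric, $g_z(\xi) = g_\xi(z)$), applying the Laplacian in $z$ under the integral sign gives $\Delta h(z) = |z|^{2n}$. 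So $h$ solves a Poisson problem on $\D$; moreover $h = 0$ on $\partial\D$ because $g_z(\xi) \to 0$ as $|z| \to 1$ for $\xi$ in the interior, and one checks the boundary vanishing is uniform enough to pass to the limit. Thus $h$ is characterized as the unique solution of $\Delta h = |z|^{2n}$ on $\D$ with $h|_{\partial\D} = 0$.

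The second step is to solve this Poisson problem explicitly. Because the data $|z|^{2n} = (z\bar z)^n$ is radial, $h$ is radial as well, so writing $h = h(r)$ with $r = |z|$ reduces the PDE to the ODE $h'' + r^{-1} h' = r^{2n}$, i.e. $(r h')' = r^{2n+1}$. Integrating twice and discarding the logarithmic homogeneous solution (to keep $h$ bounded at the origin) yields $h(r) = \dfrac{r^{2n+2}}{4(n+1)^2} + C$, and the boundary condition $h(1) = 0$ forces $C = -\dfrac{1}{4(n+1)^2}$. This gives exactly $h(r) = -\dfrac{1 - r^{2n+2}}{4(n+1)^2}$, which is the claimed formula.

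Alternatively, and perhaps more in keeping with the flavor of the paper, one could expand $\ln|\xi - z| = \Re \ln(\xi - z)$ and $\ln|1 - \bar z \xi|$ in power series and integrate term by term in polar coordinates, using $\int_0^{2\pi} e^{ik\theta}\, d\theta = 0$ for $k \neq 0$ to kill all but the radial terms; this is essentially the computation underlying Lemma~\ref{Green_area} when $n = 0$. I would present the PDE argument as the main line since it is cleaner. The only genuine subtlety—the ``hard part,'' such as it is—is justifying differentiation under the integral sign and the boundary limit: $g_z(\xi)$ has an integrable logarithmic singularity at $\xi = z$, so one must argue (e.g. via dominated convergence, using that $\|g_z\|_1$ is bounded uniformly in $z \in \D$) that $h$ is continuous up to $\overline{\D}$ and that the distributional identity $\Delta h = |z|^{2n}$ genuinely holds; once that is in hand the ODE computation is routine.
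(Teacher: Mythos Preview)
Your argument is correct. Both your approach and the paper's hinge on recognizing that the integral $h(z)=\int_\D |\xi|^{2n} g_z(\xi)\,dA(\xi)$ is the Green potential of the radial function $|\xi|^{2n}$, and hence equals the unique function with $\Delta h = |\cdot|^{2n}$ vanishing on $\partial\D$. The difference is in how that function is identified. The paper guesses the explicit antiderivative $v(\xi)=\dfrac{|\xi|^{2n+2}}{4(n+1)^2}$ and plugs it into the Poisson--Jensen formula
\[
  v(z)=\int_{\partial\D} v\,\partial_n g_z\,ds + \int_\D g_z\,\Delta v\,dA,
\]
using that $v\equiv \dfrac{1}{4(n+1)^2}$ on $\partial\D$ and that $\partial_n g_z\,ds$ is a probability measure; this reads off the answer in one line with no regularity issues to discuss. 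You instead characterize $h$ by the Poisson problem and solve the radial ODE from scratch. Your route is slightly more self-contained but, as you note, carries the overhead of justifying the boundary limit and differentiation under the integral sign; the paper's use of Poisson--Jensen absorbs those justifications into a single classical identity. Either way the computation is the same radial function.
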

\begin{proof}
  First we note that
  \begin{equation*}
    \Delta\left(\frac{|\xi|^{2n+2}}{4(n+1)^2}\right) = 4\partial\dbar\left(\frac{(\xi\overline{\xi})^{n+1}}{4(n+1)^2}\right)
      = |\xi|^{2n}.
  \end{equation*}
  We use the Poisson--Jensen formula
  \begin{equation*}
    v(z) = \int_{\partial \D} v\partial_n g_z\, ds + \int_\D g_z\Delta v\, dA
  \end{equation*}
  to write
  \begin{align*}
    \frac{|z|^{2n+2}}{4(n+1)^2}
    &= \int_{\partial \D} \frac{|\zeta|^{2n+2}}{4(n+1)^2}\partial_n g_z(\zeta) ds(\zeta) + \int_\D |\xi|^{2n} g_z(\xi)\, dA(\xi) \\
    &= \int_{\partial\D} \frac{\partial_n g_z}{4(n+1)^2}\, ds + \int_\D |\xi|^{2n} g_z(\xi)\, dA(\xi) \\
    &= \frac{1}{4(n+1)^2} + \int_\D |\xi|^{2n} g_z(\xi)\, dA(\xi),
  \end{align*}
  where we've used the facts that $|\zeta|=1$ on $\partial \D$ and that $\partial_n g_z\, ds$ is a
  probability measure on $\partial \D$. We conclude that
  \begin{equation*}
    \int_\D |\xi|^{2n} g_z(\xi)\, dA(\xi) = -\frac{1-|z|^{2n+2}}{4(n+1)^2},
  \end{equation*}
  as desired.
\end{proof}
\begin{example}
  For a simple example, consider the Dirichlet problem for the Helmholtz equation on the unit disk $\D$:
  \begin{equation*}
  \begin{cases}
    (\Delta - a) \phi_a = 0 & \textrm{ in } \D \\
    \phi_a = 1 &\textrm{ on } \partial \D,
  \end{cases}
  \end{equation*}
  where $0 < a \ll 1$. Clearly $\phi_0\equiv 1$, so we obtain
  \begin{equation} \label{Dirichlet_example}
    \phi_a(z) = 1 + a\int_\D g_z\, dA + o(a),
  \end{equation}
  where the Green function $g_z$ is given by
  \begin{equation*}
    g_z(\xi) = \frac{1}{2\pi}\ln\left|\frac{\xi-z}{1-\overline{z}\xi}\right|.
  \end{equation*}
  Lemma \ref{Green_area} gives
  \begin{equation*}
    \int_\D g_z\, dA(\xi)
      = -\frac{1-|z|^2}{4},
  \end{equation*}
  so equation \eqref{Dirichlet_example} becomes
  \begin{equation*}
    \phi_a(z) = 1 - \frac{a}{4}(1-|z|^2) + o(a).
  \end{equation*}
\end{example}

\subsection{The Dirichlet problem for Laplace--Beltrami}
\begin{theorem}
  Let $D\subset \C$ be a bounded domain with smooth, analytic boundary and suppose $u\in C^\infty(\overline{D})$ is a positive function.
  For $\epsilon \geq 0$ define the Laplace--Beltrami operator $\lambda = 1 + \epsilon u$.
  Given a bounded Borel function $f$ the solution $\phi_\epsilon$ to the Dirichlet problem
  \begin{equation*}
    \begin{cases}
      \nabla\cdot(\lambda \nabla \phi_\epsilon) = 0 & \textrm{ in } D\\
      \phi_\epsilon = f &\textrm{ on } \partial D
    \end{cases}
  \end{equation*}
  satisfies
  \begin{equation*}
    \phi_\epsilon(z) = \phi_0(z) + \epsilon \int_D g_z \nabla u \cdot \nabla \phi_0 \, dA + o(\epsilon)
  \end{equation*}
  as $\epsilon \to 0$, where the error term converges uniformly in $z$.
\end{theorem}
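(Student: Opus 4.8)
The plan is to mimic the argument used above for the Schr\"odinger Dirichlet problem: set $h:=\phi_\epsilon-\phi_0$, read off the boundary value problem it satisfies, represent $h$ through the ordinary Green function $g$ of $D$, and expand in $\epsilon$. The one genuinely new feature is that the order-$\epsilon$ forcing for $h$ now enters in divergence form rather than as a bounded multiplier, so the behavior near $\partial D$ needs a little extra care.

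Since $\phi_0$ is harmonic while $\nabla\cdot(\lambda\nabla\phi_\epsilon)=0$ with $\lambda=1+\epsilon u$, expanding the divergence gives $\Delta\phi_\epsilon=-\epsilon\,\nabla\cdot(u\nabla\phi_\epsilon)$, so that $h$ solves $\Delta h=-\epsilon\,\nabla\cdot(u\nabla\phi_\epsilon)$ in $D$ with $h=0$ on $\partial D$. As $h$ vanishes on $\partial D$, the Poisson--Jensen representation used in the proof of Lemma~\ref{Green_area} applies, and I would use it to write
\begin{equation*}
  h(z)=\int_D g_z\,\Delta h\,dA=-\epsilon\int_D g_z\,\nabla\cdot(u\nabla\phi_\epsilon)\,dA .
\end{equation*}
Next I would split $\nabla\cdot(u\nabla\phi_\epsilon)=\nabla u\cdot\nabla\phi_\epsilon+u\,\Delta\phi_\epsilon$ and note that $\Delta\phi_\epsilon=-\epsilon(1+\epsilon u)^{-1}\nabla u\cdot\nabla\phi_\epsilon$ is itself of order $\epsilon$, so the term carrying $u\,\Delta\phi_\epsilon$ only contributes $O(\epsilon^2)$, leaving
\begin{equation*}
  h(z)=-\epsilon\int_D g_z\,\nabla u\cdot\nabla\phi_\epsilon\,dA+O(\epsilon^2).
\end{equation*}
Replacing $\nabla\phi_\epsilon$ by $\nabla\phi_0$ inside the integral, at the cost of $o(\epsilon)$, then yields the first-order expansion claimed; a single further integration by parts, legitimate because $g_z$ vanishes on $\partial D$ and $\phi_0$ is harmonic, recasts the correction equivalently in terms of $u\,\nabla g_z\cdot\nabla\phi_0$, exhibiting the parallel with the Laplace--Beltrami Green variation \eqref{Beltrami1}.

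The step I expect to be the real work is the $o(\epsilon)$ error estimate near $\partial D$, which is delicate precisely because $f$ is allowed to be only a bounded Borel function. In that generality $\phi_0$ is merely the harmonic extension of $f$, and $\nabla\phi_0$ need not be integrable up to $\partial D$ --- generically it grows like $\operatorname{dist}(\cdot,\partial D)^{-1}$ --- so the integral appearing in the theorem makes sense only because $g_z$ vanishes linearly on $\partial D$ and thereby absorbs this blow-up, and no integration by parts that naively shifts the gradient onto $g_z$ is available on all of $D$. Showing the remainder is $o(\epsilon)$ uniformly in $z$ therefore requires weighted elliptic estimates for $h$, used alongside the standard a priori bound $|\nabla\phi_\epsilon(\xi)|,|\nabla\phi_0(\xi)|\lesssim\|f\|_\infty\operatorname{dist}(\xi,\partial D)^{-1}$ and the fact that $\nabla h\to0$ locally uniformly in $D$ as $\epsilon\to0$: one estimates $\int_D g_z\,\nabla u\cdot\nabla h\,dA$ by splitting off a thin boundary collar, where the vanishing of $g_z$ keeps the integrand bounded, from an interior region, where $\nabla h$ is uniformly small. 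If one assumes instead that $f$ is smooth, so that $\phi_0\in C^1(\overline D)$, all of these difficulties disappear: the forcing $-\epsilon\,\nabla u\cdot\nabla\phi_\epsilon$ is bounded, every integration by parts above is unconditionally valid, and the proof collapses to the short computation just sketched.
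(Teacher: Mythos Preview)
Your approach is correct in outline but differs from the paper's in one key organizational choice. You apply the \emph{Laplacian} to $h=\phi_\epsilon-\phi_0$, which produces a forcing term $-\epsilon\,\nabla\cdot(u\nabla\phi_\epsilon)$ involving the unknown $\phi_\epsilon$; you then represent $h$ via the ordinary Green function $g$ and must afterwards justify replacing $\nabla\phi_\epsilon$ by $\nabla\phi_0$ inside the integral. The paper instead applies the full operator $\nabla\lambda\nabla$ to $h$: since $\nabla\cdot(\lambda\nabla\phi_0)=\epsilon\,\nabla u\cdot\nabla\phi_0$ and $\nabla\cdot(\lambda\nabla\phi_\epsilon)=0$, the forcing is immediately $\epsilon\,\nabla u\cdot\nabla\phi_0$, a function of the \emph{known} $\phi_0$ only. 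Representing $h$ via the Green function $g^*$ of $\nabla\lambda\nabla$ gives
\[
  \phi_\epsilon(z)-\phi_0(z)=\epsilon\int_D g^*_z\,\nabla u\cdot\nabla\phi_0\,dA,
\]
and the passage from $g^*$ to $g$ is then a direct citation of the already-proven perturbation formula \eqref{Beltrami1}. The paper's route therefore packages all of the ``real work'' you identify into Theorem~\ref{Beltrami}, and no separate bootstrap $\nabla\phi_\epsilon\to\nabla\phi_0$ is needed. Your route is self-contained (it does not invoke Theorem~\ref{Beltrami}) and your discussion of the boundary-collar splitting is a genuine analytical point that the paper's short argument passes over; but the paper's choice of operator makes the formal computation considerably shorter.
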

\begin{proof}
  First note that within $D$,
  \begin{equation*}
    \nabla\cdot(\lambda\nabla\phi_0) = \lambda \Delta \phi_0 + \nabla\lambda \cdot\nabla\phi_0
    = \epsilon\nabla u \cdot\nabla \phi_0.
  \end{equation*}
  Therefore
  \begin{equation*}
    \begin{cases}
      \nabla\cdot[\lambda \nabla (\phi_\epsilon-\phi_0)] = \epsilon\nabla u\cdot\nabla \phi_0 & \textrm{ in } D\\
      \phi_\epsilon-\phi_0 = 0 &\textrm{ on } \partial D.
    \end{cases}
  \end{equation*}
  This problem can be solved by integrating against $g^*$, the Green function of $\nabla\lambda\nabla$:
  \begin{equation*}
    \phi_\epsilon(z) - \phi_0(z) = \epsilon\int_D g^*_z \nabla u\cdot \nabla \phi_0\, dA.
  \end{equation*}
  Using the perturbation formula \eqref{Beltrami1} gives
  \begin{equation*}
    \phi_\epsilon(z) = \phi_0(z) + \epsilon\int_D g_z \nabla u\cdot \nabla \phi_0\, dA + o(\epsilon). \qedhere
  \end{equation*}
\end{proof}
\begin{example}
  Returning to the unit disk, let $u(\xi) = |\xi|^2$ and $f(x,y) = x^2-y^2$ on $\partial \D$.
  Given $\epsilon \ll 1$, consider the Dirichlet problem
  \begin{equation*}
    \begin{cases}
      \nabla\cdot((1+\epsilon u) \nabla \phi_\epsilon) = 0 & \textrm{ in } \D\\
      \phi_\epsilon = f &\textrm{ on } \partial \D.
    \end{cases}
  \end{equation*}
  Notice that $\phi_0(x,y) = x^2-y^2$; that is, $\phi_0(z) = z^2+\overline{z}^2$.
  We compute
  \begin{equation*}
    \nabla u(\xi) \cdot\nabla \phi_0(\xi) = \Re(2\xi\cdot 2\overline{\xi}) = 4|\xi|^2,
  \end{equation*}
  so the first variation of the solution is given by
  \begin{equation*}
    \delta\phi_\epsilon(z) = \int_\D 4|\xi|^2 g_z(\xi)\, dA(\xi).
  \end{equation*}
  Lemma \ref{Green_area} gives
  \begin{equation*}
    \delta\phi_\epsilon(z) = -\frac{1-|z|^{4}}{4},
  \end{equation*}
  so we have
  \begin{equation*}
    \phi_\epsilon(x,y) = x^2-y^2 - \frac{\epsilon}{4}(1-x^2-y^2) + o(\epsilon).
  \end{equation*}
\end{example}

%%%%%%%%%%%%%%%%
\section{Further Characterization of Elliptic Growth Processes}
In the introduction we discussed a few successes and shortcomings of Laplacian growth as a model for fluid flow in a porous medium
or in a Hele--Shaw cell. We now turn to elliptic growth, one of the aforementioned generalizations of Laplacian growth
that takes permeability to be a nonconstant scalar function. Recall the dynamics: we say that a family of domains $\{D(t): 0\leq t<t_0\}$
undergoes elliptic growth with permeability $\lambda$ and flow rate $Q$ if for some $w\in\cap_t D(t)$ we have
  \begin{equation}\label{EG_definition}
    \begin{cases}
      \nabla\lambda\nabla g = Q\delta_w & \textrm{ in } D\\
      g=0 & \textrm{ on } \partial D \\
      v_n = \lambda\partial_n g & \textrm{ on } \partial D,
    \end{cases}
  \end{equation}
where $g$ denotes the Green function of $D(t)$ for the operator $\nabla\lambda\nabla$ and $v_n$ denotes the outward normal velocity
of the boundary $\partial D(t)$. Elliptic growth in this form was first described in
\cite{Khavinson}, wherein the authors also describe a type of elliptic growth replacing $\nabla\lambda\nabla$ with a Schr\"odinger
operator $\Delta - u$. In this section we will describe a few basic properties of elliptic growth of both types.
The first result is an analogue of Richardson's theorem; this theorem and its corollary appeared previously in \cite{Khavinson}.

\begin{theorem}
  Let $D(t)\subset\C$ be a growing family of domains with $C^2$ boundaries described by an elliptic growth process
  via a Laplace--Beltrami operator $L=\nabla\lambda\nabla$ and singularity $w$. If $\phi$ is a smooth function satisfying $L\phi=0$, then
  \begin{equation*}
    \frac{d}{dt}\int_{D(t))} \phi\, dA = \phi(w).
  \end{equation*}
\end{theorem}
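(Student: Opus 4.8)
The plan is to follow the proof of Richardson's theorem line for line, with the Laplacian replaced by $L=\nabla\lambda\nabla$ and the ordinary Green identity replaced by the integration--by--parts identity for $L$ recorded in Section~1. First I would apply the transport (Reynolds) theorem: since $\phi$ does not depend on $t$ and $\partial D(t)$ moves with outward normal velocity $v_n$,
\begin{equation*}
  \frac{d}{dt}\int_{D(t)}\phi\,dA \;=\; \int_{\partial D(t)}\phi\,v_n\,ds .
\end{equation*}
The growth law in \eqref{EG_definition} gives $v_n=\lambda\,\partial_n g$, so the right--hand side becomes $\int_{\partial D(t)}\lambda\,\phi\,\partial_n g\,ds$, a purely boundary quantity no longer referring to the motion of $D(t)$.

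Next I would evaluate this boundary integral by the Green identity for $L$ with $u=\phi$ and $v=g$. Because $g\equiv 0$ on $\partial D(t)$, the term containing $\partial_n\phi$ drops out, leaving
\begin{equation*}
  \int_{\partial D(t)}\lambda\,\phi\,\partial_n g\,ds \;=\; \int_{D(t)}\bigl(\phi\,Lg-g\,L\phi\bigr)\,dA .
\end{equation*}
By hypothesis $L\phi=0$, so only the first term survives, and $Lg=\delta_w$ gives $\int_{D(t)}\phi\,Lg\,dA=\phi(w)$ (a general rate $Q$ would simply produce the expected factor $Q\,\phi(w)$). Chaining the two displays together yields $\frac{d}{dt}\int_{D(t)}\phi\,dA=\phi(w)$, which is the assertion.

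The places where honest work is needed — and which I regard as the crux — are the two analytic justifications buried in the computation above. The transport formula is applied to a family whose boundaries are only $C^2$ and whose evolution is prescribed implicitly through the Green function $g$; one must check that the boundary parametrization and the map $t\mapsto v_n$ are regular enough for the flux identity to be valid, precisely the regularity caveat the excerpt flags right after Richardson's theorem. The Green identity, in turn, must be invoked in the presence of the logarithmic singularity of $g$ at $w$, which is handled by the standard excision argument: apply the classical, singularity--free identity for $L$ on $D(t)\setminus\overline{B_\epsilon(w)}$, where $Lg=0$ and $L\phi=0$ make the area integral vanish identically, and then let $\epsilon\to0$. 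Using the local behavior $g(z)=\tfrac{1}{2\pi\lambda(w)}\ln|z-w|+O(1)$ near $w$, the boundary contribution of $\partial B_\epsilon(w)$ tends to $\phi(w)$ while the companion term $\int_{\partial B_\epsilon(w)}\lambda\,g\,\partial_n\phi\,ds=O(\epsilon\ln\epsilon)$ vanishes. Once these two points are in place the proof goes through; specializing $\phi$ to an $L$--harmonic substitute for the monomials $z^n$ then yields a moment corollary in the spirit of \eqref{harmonic_moments}, which is presumably the corollary the text refers to.
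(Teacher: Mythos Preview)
Your argument is correct and follows the paper's proof essentially verbatim: transport theorem, substitute $v_n=\lambda\partial_n g_w$, insert the vanishing term $-g_w\partial_n\phi$ using $g_w|_{\partial D(t)}=0$, apply the Green identity for $L$, and read off $\phi(w)$ from $Lg_w=\delta_w$ and $L\phi=0$. The paper omits the regularity discussion and the excision justification you spell out, but these are exactly the points it implicitly relies on.
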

\begin{proof}
  Call the relevant Green function $g_w$. The outward normal velocity of the boundary is $\lambda\partial_n g_w$,
  so we have
  \begin{align*}
    \frac{d}{dt}\int_{D(t))} \phi\, dA &= \int_{\partial D(t)} \phi\lambda\partial_ng_w\, ds
    = \int_{\partial D(t)} \lambda(\phi\partial_ng_w - g_w\partial_n \phi)\, ds \\
    &= \int_{D(t)} (\phi Lg_w - g_wL\phi)\, dA
    = \phi(w),
  \end{align*}
  where we've use the facts that $L\phi = 0$ in $D(t)$, $g_w = 0$ on $\partial D(t)$, and $Lg_w = \delta_w$ in $D(t)$.
\end{proof}
\begin{corollary}\label{LB_area_growth}
  With the assumptions of the previous theorem,
  \begin{equation*}
    \frac{d}{dt}\mathrm{area}(D(t)) = 1.
  \end{equation*}
\end{corollary}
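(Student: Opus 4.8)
The plan is to apply the preceding theorem with the simplest possible choice of test function. Since the constant function $\phi \equiv 1$ satisfies $L\phi = \nabla\lambda\nabla 1 = 0$ — because $\nabla 1 = 0$ identically — it is an admissible choice in the hypothesis of the theorem, regardless of what the permeability $\lambda$ is. Substituting $\phi \equiv 1$ into the conclusion $\frac{d}{dt}\int_{D(t)} \phi\, dA = \phi(w)$ immediately gives $\frac{d}{dt}\int_{D(t)} 1\, dA = 1$, and the integral on the left is precisely $\mathrm{area}(D(t))$.

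The only point that warrants a sentence of care is the smoothness requirement: the theorem asks for $\phi$ to be a smooth function, and the constant function is certainly smooth on a neighborhood of each $\overline{D(t)}$, so no technical issue arises. I would also note in passing — though it is not strictly needed — that this mirrors exactly the structure of the classical Richardson statement in \eqref{harmonic_moments}, where the linear-in-time growth of area (there with rate $Q$, here with rate $1$ since the flow rate has been normalized in \eqref{EG_definition}) is recovered as the ``zeroth moment'' case of the harmonic/$L$-harmonic moment conservation.

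There is essentially no obstacle here: the corollary is a pure specialization of the theorem just proved, and the entire argument is the observation that constants lie in the kernel of any operator of divergence form $\nabla\lambda\nabla$. The proof I would write is two lines: cite the theorem, take $\phi \equiv 1$, and read off that $\int_{D(t)} 1\, dA = \mathrm{area}(D(t))$ grows at unit rate.
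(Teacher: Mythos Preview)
Your proposal is correct and matches the paper's own proof exactly: the paper's argument is simply ``Take $\phi\equiv 1$ in the previous theorem.'' Your additional remarks about smoothness and the analogy with Richardson's theorem are sound but not needed for the formal proof.
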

\begin{proof}
  Take $\phi\equiv 1$ in the previous theorem.
\end{proof}

\begin{lemma} \label{slow_growth}
  Let $D(t)\subset\C$ be a growing family of domains with $C^2$ boundaries described by an elliptic growth process
  via a Schr\"odinger operator. Then at each time $t$,
  \begin{equation*}
    \frac{d}{dt}\mathrm{area}(D(t)) \leq 1.
  \end{equation*}
\end{lemma}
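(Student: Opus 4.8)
The plan is to mirror the proof of the preceding Richardson--type theorem, but to keep track of the extra absorption term contributed by the Schr\"odinger potential. Recall that Schr\"odinger elliptic growth, as introduced in \cite{Khavinson}, is driven by the Green function $g_w$ of the operator $\Delta - u$ with $u \geq 0$: thus $(\Delta - u)g_w = \delta_w$ in $D(t)$, $g_w = 0$ on $\partial D(t)$, and the kinematic condition reads $v_n = \partial_n g_w$ on $\partial D(t)$. First I would apply the transport identity $\frac{d}{dt}\int_{D(t)} f\,dA = \int_{\partial D(t)} f\, v_n\, ds$ (the same one invoked in the proofs above) with $f \equiv 1$, which gives
\[
  \frac{d}{dt}\mathrm{area}(D(t)) = \int_{\partial D(t)} \partial_n g_w\, ds .
\]

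Next I would evaluate this boundary integral by Green's identity applied to the pair $1$ and $g_w$ on $D(t)$, excising a small disc about the singularity $w$ and letting its radius tend to zero, exactly as in the Poisson--Jensen computation of Lemma \ref{Green_area}. Since $\Delta g_w = \delta_w + u g_w$ in the distributional sense, this yields
\[
  \int_{\partial D(t)} \partial_n g_w\, ds = \int_{D(t)} \Delta g_w\, dA = 1 + \int_{D(t)} u\, g_w\, dA .
\]
The term $1$ is the residue coming from the logarithmic pole of $g_w$ at $w$; it is the source of the clean equality in Corollary \ref{LB_area_growth}, and it degrades to an upper bound here precisely because of the correction integral.

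The one genuine step is to show that the correction integral is nonpositive, which reduces to the sign fact $g_w \leq 0$ throughout $D(t)$. This follows from the maximum principle for $\Delta - u$ with $u \geq 0$: on the open set $\Omega = \{\, z \in D(t) : g_w(z) > 0 \,\}$ one has $w \notin \overline{\Omega}$ (since $g_w \to -\infty$ near $w$) and $\Delta g_w = u g_w \geq 0$, so $g_w$ is subharmonic on $\Omega$; but $g_w$ vanishes on $\partial \Omega$, which lies in the interior of $D(t)$, so the maximum principle forces $g_w \leq 0$ on $\Omega$ and hence $\Omega = \emptyset$. With $u \geq 0$ and $g_w \leq 0$ we conclude $\int_{D(t)} u\, g_w\, dA \leq 0$, whence $\frac{d}{dt}\mathrm{area}(D(t)) \leq 1$.

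The only obstacle, and it is a minor and classical one, is the careful treatment of the singularity of $g_w$ when applying Green's identity---the same technical point handled in Lemma \ref{Green_area}---together with pinning down the sign of the Schr\"odinger Green function. It is worth remarking that, since $g_w < 0$ on the interior, equality occurs only when $u$ is identically zero on $D(t)$, so the inequality is strict whenever the potential is genuinely present.
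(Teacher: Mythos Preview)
Your proof is correct and follows essentially the same route as the paper: compute the area derivative as $\int_{\partial D(t)} \partial_n g_w\,ds$, convert this to $1 + \int_{D(t)} u\, g_w\,dA$ via the divergence theorem and the equation $(\Delta - u)g_w = \delta_w$, and conclude from $u \geq 0$ and $g_w \leq 0$. The paper simply asserts $g_w \leq 0$ without argument, so your maximum-principle justification and your remark on the equality case are welcome elaborations rather than departures.
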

\begin{proof}
  Calling the singularity $w$ and the operator $\Delta-u$, we have
  \begin{equation*}
    \frac{d}{dt}\mathrm{area}(D(t)) = \int_{\partial D(t)} \partial_n g_w\, ds = \int_{D(t)} \Delta g_w\, dA.
  \end{equation*}
  Using the fact that $(\Delta-u)g_w = \delta_w$ gives
  \begin{equation*}
    \frac{d}{dt}\mathrm{area}(D(t))
    = \int_{D(t)} (\delta_w +ug_w)\, dA
    = 1 + \int_{D(t)} ug_w\, dA.
  \end{equation*}
  Since $u\geq 0$ and $g_w\leq 0$, the result follows.
\end{proof}
Together these three results show that elliptic growth of Laplace--Beltrami type features the same constant area increase
of Laplacian growth, whereas elliptic growth of Schr\"odinger type is slower. We can refine this idea a bit and show that
Schr\"odinger--type growth initially has the same rate of area change as the other growth processes, assuming that
the initial domain has no spatial extent (that is, we begin pumping fluid into an empty medium).
\begin{lemma} \label{positive_lemma}
  Let $D\subset \C$ be a bounded domain with $C^2$ boundary.
  Given a nonnegative function $u\in C^\infty(D)$, there exists a function
  $\varphi\in C^\infty(D)\cap C(\overline{D})$ so that $(\Delta-u)\varphi = 0$ and $\varphi > 0$ in $D$.
\end{lemma}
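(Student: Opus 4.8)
The plan is to produce $\varphi$ as the solution of a Dirichlet problem with carefully chosen boundary data, then upgrade positivity from the boundary to the interior via the maximum principle for the operator $\Delta - u$.

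First I would solve the Dirichlet problem $(\Delta - u)\varphi = 0$ in $D$ with $\varphi = 1$ on $\partial D$. Since $u \geq 0$, the operator $\Delta - u$ satisfies the hypotheses of the generalized maximum principle (equivalently, the zeroth-order coefficient $-u$ is $\leq 0$), so this problem has a unique solution $\varphi \in C^\infty(D) \cap C(\overline{D})$; the regularity up to the boundary uses the $C^2$ assumption on $\partial D$ together with the smoothness of $u$.

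The key step is to show $\varphi > 0$ in $D$. Suppose not; then $\varphi$ attains a nonpositive value somewhere in $D$, so $\min_{\overline D} \varphi \leq 0 < 1 = \min_{\partial D}\varphi$, and the minimum is attained at an interior point $z_0 \in D$ with $\varphi(z_0) \leq 0$. At such a point $\Delta\varphi(z_0) \geq 0$, while $u(z_0)\varphi(z_0) \leq 0$, so $(\Delta - u)\varphi(z_0) = \Delta\varphi(z_0) - u(z_0)\varphi(z_0) \geq 0$. For a cleaner contradiction I would invoke the strong maximum principle (Hopf): since $(\Delta - u)\varphi = 0$ and $-u \leq 0$, a nonpositive interior minimum of $\varphi$ forces $\varphi$ to be constant on the connected component containing $z_0$, hence $\varphi \equiv \varphi(z_0) \leq 0$ on all of $D$ by connectedness; but then by continuity $\varphi \equiv \varphi(z_0) \leq 0$ on $\partial D$, contradicting $\varphi = 1 > 0$ there. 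Therefore $\varphi > 0$ throughout $D$, and in fact $0 < \varphi \leq 1$.

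The main obstacle is purely a matter of citing the correct version of the maximum principle: the operator $\Delta - u$ is not in divergence-free or self-adjoint-positive form in the way that would make the classical harmonic maximum principle apply directly, so one must be careful to use the form valid for $Lu = \Delta u + c(x)u$ with $c = -u \leq 0$, where interior extrema of a definite sign (nonpositive minimum, nonnegative maximum) are controlled. Modulo invoking this standard result (see, e.g., the maximum principle chapter of Gilbarg–Trudinger), the argument is routine. One could alternatively construct $\varphi$ via the series expansion \eqref{Schrodinger_series} applied to a positive harmonic function, but the Dirichlet-problem approach above is cleaner and avoids any smallness assumption on $u$.
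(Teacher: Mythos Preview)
Your proposal is correct and follows essentially the same approach as the paper: solve the Dirichlet problem $(\Delta-u)\varphi=0$ in $D$ with boundary data $\varphi=1$, then use the strong (Hopf) maximum principle for operators with nonpositive zeroth-order coefficient to rule out a nonpositive interior minimum. The paper even cites the same source (Theorem 3.5 of Gilbarg--Trudinger) for the maximum principle step.
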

\begin{proof}
  The Dirichlet problem
  \begin{equation*}
    \begin{cases}
      (\Delta -u) \varphi = 0 & \textrm{ in } D \\
      \varphi=1 & \textrm{ on } \partial D
    \end{cases}
  \end{equation*}
  has a unique solution which is continuous on $\overline{D}$. By compactness of $\overline{D}$,
  the solution $\varphi$ must attain a minimum value. If this
  minimum is on $\partial D$, then $\varphi > 0$ everywhere. If the minimum occurs at $z\in D$, then by Hopf's maximum principle (theorem
  3.5 in \cite{Gilbarg}) either $\varphi(z) > 0$ or $\varphi$ is constant. In the former case, $\varphi > 0$ throughout $D$;
  in the latter case $\varphi \equiv 1$ and once again is positive everywhere.
\end{proof}
\begin{theorem}
  Let $\{D(t): 0 < t < t_0\}$ be a growing family of domains with $C^2$ boundaries produced by an elliptic growth process
  of Schr\"odinger type with singularity at $w$.
  If we assume that
  \begin{equation*}
    \lim_{t\to 0} \sup_{\zeta\in \partial D(t)} |\zeta-w| = 0,
  \end{equation*}
  then
  \begin{equation*}
    \lim_{t\to 0} \frac{d}{dt}\mathrm{area}(D(t)) = 1.
  \end{equation*}
\end{theorem}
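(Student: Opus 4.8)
The plan is to start from the exact identity established in the proof of Lemma~\ref{slow_growth}: writing the driving operator as $\Delta - u$ and letting $g_w$ be the Green function of $D(t)$ for $\Delta - u$ with pole at $w$, one has
\[
  \frac{d}{dt}\mathrm{area}(D(t)) = 1 + \int_{D(t)} u g_w\, dA,
\]
with $g_w \le 0$. So the theorem is equivalent to the assertion that the nonpositive correction term $\int_{D(t)} u g_w\, dA$ tends to $0$ as $t \to 0$, and since $u \ge 0$ and $g_w \le 0$ it suffices to prove the one-sided estimate $\int_{D(t)} u |g_w|\, dA \to 0$.

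The first real step is geometric localization. Set $r(t) = \sup_{\zeta \in \partial D(t)} |\zeta - w|$, so $r(t) \to 0$ by hypothesis, and I would first check that $D(t) \subseteq B(w, r(t))$: the set $\C \setminus \overline{B(w, r(t))}$ is connected, unbounded, and disjoint from $\partial D(t)$, hence it lies in the complement of the bounded set $\overline{D(t)}$, which forces $\overline{D(t)} \subseteq \overline{B(w, r(t))}$, and openness of $D(t)$ upgrades this to inclusion in the open ball. Having localized, fix once and for all a radius $r_0$ with $\overline{B(w,r_0)}$ inside the neighborhood on which $u$ is smooth, restrict to $t$ small enough that $r(t) < r_0$, and set $M = \max_{\overline{B(w,r_0)}} u$, so that $u \le M$ throughout $D(t)$.

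Next I would control $|g_w|$ by the Laplacian's Green function on the small ball, in two comparison steps. Let $G_w$ be the Green function of $\Delta$ on $D(t)$; the difference $v = g_w - G_w$ has no singularity at $w$, vanishes on $\partial D(t)$, and satisfies $\Delta v = u g_w \le 0$, so $v$ is superharmonic and the minimum principle gives $v \ge 0$, i.e. $G_w \le g_w \le 0$ on $D(t)$. Since $D(t) \subseteq B := B(w, r(t))$, domain monotonicity of the Green function (enlarging a domain only decreases it, as noted after Hadamard's formula) gives $|G_w| \le |G_w^B|$ on $D(t)$, where $G_w^B(z) = \frac{1}{2\pi}\ln\!\big(|z-w|/r(t)\big)$ is the Green function of $B$; combining, $|g_w| \le |G_w^B|$ on $D(t)$. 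A change of variables reduces $\int_B |G_w^B|\, dA$ to the $n=0$ case of Lemma~\ref{Green_area}, giving $\int_B |G_w^B|\, dA = r(t)^2/4$, and therefore
\[
  0 \le -\int_{D(t)} u g_w\, dA \le M \int_{D(t)} |G_w^B|\, dA \le \frac{M}{4}\, r(t)^2 \longrightarrow 0,
\]
which finishes the proof. The main obstacle is not any individual estimate but the need to bound $|g_w|$ uniformly over the family $D(t)$, whose only regularity is the $C^2$ boundary and the collapse hypothesis: the topological inclusion $D(t) \subseteq B(w, r(t))$ and the two-step passage through the Laplacian Green function on a ball are the load-bearing parts of the argument, and everything else is bookkeeping.
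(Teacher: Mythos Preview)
Your argument is correct and takes a genuinely different route from the paper's. Both proofs start from the identity $\frac{d}{dt}\mathrm{area}(D(t)) = 1 + \int_{D(t)} u g_w\, dA$ and reduce to showing the integral vanishes as $t\to 0$, but the paper proceeds by invoking Lemma~\ref{conversion_lemma} to write $u = \lambda^{-1/2}\Delta\lambda^{1/2}$ and pass to the Laplace--Beltrami Green function, then uses two rounds of integration by parts to rewrite the area integral as a boundary integral against the probability measure $\lambda\,\partial_n g_w\,ds$, concluding via uniform continuity of $\lambda^{-1/2}$ near $w$. You instead sandwich $|g_w|$ by two maximum--principle comparisons---Schr\"odinger versus Laplacian on $D(t)$, then domain monotonicity to the circumscribed ball $B(w,r(t))$---and finish with the explicit computation $\int_B |G_w^B|\,dA = r(t)^2/4$ via Lemma~\ref{Green_area}. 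Your approach is more elementary, avoids the conversion machinery entirely, and delivers a quantitative rate $O(r(t)^2)$ rather than a modulus--of--continuity bound; the paper's approach, though longer, showcases the Schr\"odinger--to--Laplace--Beltrami correspondence that recurs throughout the article. One small point worth making explicit in your write-up: the claim that $v = g_w - G_w$ is nonsingular at $w$ rests on both Green functions sharing the leading term $(2\pi)^{-1}\ln|z-w|$ (same principal part), so that $\Delta v = u g_w$ holds distributionally across $w$ with an $L^p$ right-hand side, whence $v$ is continuous and the minimum principle applies on all of $D(t)$.
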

\begin{proof}
  From lemma \ref{slow_growth} we know that the family of growing domains is bounded, so let $D$ be a large disk containing
  each $D(t)$. Denote the underlying operator by $\Delta-u$ for some $u\in C^\infty(D)$ and the singularity
  of the growth process by $w$.
  From the proof of theorem \ref{slow_growth}, we see it suffices to show that
  \begin{equation*}
    \lim_{t \to 0}\int_{D(t)} uG_w\, dA = 0,
  \end{equation*}
  where $G_w$ denotes the Green function of $\Delta-u$ on $D(t)$. Using lemmas
  \ref{positive_lemma} and \ref{conversion_lemma} we can find a positive
  smooth function $\lambda$ so that $u=\lambda^{-1/2}\Delta\lambda^{1/2}$ and
  \begin{equation*}
    G_w(z) = g_w(z)\sqrt{\lambda(w)\cdot\lambda(z)},
  \end{equation*}
  where $g_w$ denotes the Green function of $\nabla\lambda\nabla$. Thus it suffices to show that
  \begin{equation*}
    \lim_{t\to 0}\int_{D(t)} g_w\Delta\sqrt{\lambda}\, dA = 0.
  \end{equation*}
  We proceed by writing this integral in two ways. Since $g_w=0$ on $\partial D(t)$, two applications of integration by parts gives
  \begin{align}
    \int_{D(t)} g_w\Delta\sqrt{\lambda}\, dA &= -\int_{D(t)} \nabla\sqrt{\lambda} \cdot \nabla g_w\, dA \notag\\
    &= \int_{D(t)} \sqrt{\lambda}\Delta g_w \, dA - \int_{\partial D(t)} \sqrt{\lambda}\partial_n g_w\, ds. \label{area_limit_equation}
  \end{align}
  Alternatively, we can write
  \begin{align*}
    \int_{D(t)} g_w\Delta\sqrt{\lambda}\, dA &= -\int_{D(t)} \nabla\sqrt{\lambda} \cdot \nabla g_w\, dA \\
    &= -\int_{D(t)} \frac{\nabla\lambda \cdot \nabla g_w}{2\sqrt{\lambda}}\, dA \\
    &= -\int_{D(t)} \frac{\delta_w - \lambda\Delta g_w}{2\sqrt{\lambda}}\, dA \\
    &= \frac{1}{2}\int_{D(t)} \sqrt{\lambda}\Delta g_w\, dA - \frac{1}{2\sqrt{\lambda(w)}}.
  \end{align*}
  Combining this with equation \eqref{area_limit_equation} gives
  \begin{equation*}
    \int_{D(t)} \sqrt{\lambda}\Delta g_w \, dA = 2\int_{\partial D(t)} \sqrt{\lambda}\partial_n g_w\, ds
      - \frac{1}{\sqrt{\lambda(w)}}.
  \end{equation*}
  Inserting this back into \eqref{area_limit_equation} gives
  \begin{equation*}
    \int_{D(t)} g_w\Delta\sqrt{\lambda}\, dA = \int_{\partial D(t)} \sqrt{\lambda}\partial_n g_w\, ds - \frac{1}{\sqrt{\lambda(w)}}.
  \end{equation*}
  Finally, since $\lambda$ is bounded away from 0 and $\lambda\partial_n g_w\, ds$ is a probability measure on $\partial D(t)$,
  we have
  \begin{align*}
    \left|\int_{D(t)} g_w\Delta\sqrt{\lambda}\, dA\right|
      &= \left|\int_{\partial D(t)} \left(\frac{1}{\sqrt{\lambda}} - \frac{1}{\sqrt{\lambda(w)}}\right)\lambda\partial_n g_w\, ds\right| \\
      &\leq \left\|\frac{1}{\sqrt{\lambda}} - \frac{1}{\sqrt{\lambda(w)}}\right\|_{\infty, \overline{D(t)}}
        \int_{\partial D(t)} \lambda \partial_n g_w\, ds \\
      &= \left\|\frac{1}{\sqrt{\lambda}} - \frac{1}{\sqrt{\lambda(w)}}\right\|_{\infty, \overline{D(t)}}.
  \end{align*}
  As $\lambda^{-1/2}$ is continuous on the large disk $D$, it is uniformly continuous near $w$. Our assumption
  on the (Hausdorff) convergence of $\partial D(t)$ to the point $w$ is enough to conclude that
  \begin{equation*}
    \left|\int_{D(t)} g_w\Delta\sqrt{\lambda}\, dA\right|
      \leq \left\|\frac{1}{\sqrt{\lambda}} - \frac{1}{\sqrt{\lambda(w)}}\right\|_{\infty, \overline{D(t)}} \to 0,
  \end{equation*}
  as desired.
\end{proof}
Our next problem in studying elliptic growth of Schr\"odinger type is the difficulty one has in computing simple examples. For instance,
it is clear that a radially--symmetric potential will grow disks into larger disks, but at what rate? The following theorem
allows us to produce a class of examples.
\begin{theorem} \label{example_maker}
  For $R>0$ let $D_R\subset \C$ denote the disk of radius $R$ centered at $0$. For a smooth, nondecreasing radial function
  $\lambda > 0$ define the potential
  \begin{equation*}
    u = \frac{\Delta\sqrt{\lambda}}{\sqrt{\lambda}},
  \end{equation*}
  also a radial function. If $D_R$ undergoes elliptic growth with operator $\Delta-u$ and singularity at the origin, then we produce
  an increasing family of disks with areas changing at the rate
  \begin{equation*}
    \frac{d}{dt}\mathrm{area}(D_R) = \frac{\sqrt{\lambda(0)}}{\sqrt{\lambda(R)}}.
  \end{equation*}
\end{theorem}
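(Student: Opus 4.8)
The plan is to use the radial symmetry of the data together with Lemma~\ref{conversion_lemma} to reduce the area rate to a one--line computation with the Laplace--Beltrami harmonic measure. First I would observe that, since $\lambda$ is radial, so is $u=\Delta\sqrt\lambda/\sqrt\lambda$; as the initial domain $D_R$ is a disk centered at the singularity, the resulting Schr\"odinger--type elliptic growth family must be a nested family of concentric disks $D_{R(t)}$ (by uniqueness of forward elliptic growth, the analogue of the Laplacian uniqueness recalled in the introduction; alternatively one verifies directly that the concentric family solving $\dot R=\partial_n G_0$ on $\partial D_R$ satisfies all three conditions of \eqref{EG_definition}). One also notes that the standing hypotheses make $\Delta-u$ an admissible driver of Schr\"odinger--type growth; in particular $\lambda(R)\geq\lambda(0)$, so the rate obtained below is $\leq 1$, consistent with Lemma~\ref{slow_growth}.

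Next, exactly as in the proof of Lemma~\ref{slow_growth}, the area rate for Schr\"odinger--type growth is
\begin{equation*}
  \frac{d}{dt}\mathrm{area}(D_R) = \int_{\partial D_R} \partial_n G_0\, ds,
\end{equation*}
where $G_0$ is the Green function of $\Delta-u$ on $D_R$ with pole at the origin. Applying Lemma~\ref{conversion_lemma} with $w=0$ (note that $u=\lambda^{-1/2}\Delta\lambda^{1/2}$) gives $G_0(z)=g_0(z)\sqrt{\lambda(0)\,\lambda(z)}$, where $g_0$ is the Green function of $\nabla\lambda\nabla$ on $D_R$. Since $g_0$ vanishes on $\partial D_R$ and $\lambda\equiv\lambda(R)$ there, differentiating the product kills the term carrying $g_0$ and leaves $\partial_n G_0=\sqrt{\lambda(0)\,\lambda(R)}\,\partial_n g_0$ on $\partial D_R$ (this is legitimate because the smooth analytic boundary makes $g_0\in C^1(\overline{D_R})$).

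Finally I would invoke the fact, repeatedly exploited above, that $\lambda\,\partial_n g_0\,ds$ is a probability measure on $\partial D_R$ (take $\phi\equiv 1$ in Green's identity for $\nabla\lambda\nabla$). With $\lambda\equiv\lambda(R)$ on the boundary this forces $\int_{\partial D_R}\partial_n g_0\,ds=1/\lambda(R)$, hence
\begin{equation*}
  \frac{d}{dt}\mathrm{area}(D_R) = \sqrt{\lambda(0)\,\lambda(R)}\int_{\partial D_R}\partial_n g_0\,ds = \frac{\sqrt{\lambda(0)}}{\sqrt{\lambda(R)}},
\end{equation*}
as claimed. The computation itself is immediate; the only genuine obstacle is the first step, rigorously guaranteeing that the growth stays a family of disks, for which I would lean on a uniqueness theorem for forward elliptic growth or else simply take the concentric family as the definition of the example being exhibited.
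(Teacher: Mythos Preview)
Your argument is correct and follows essentially the same route as the paper: apply the conversion lemma to write $G_0=g_0\sqrt{\lambda(0)\lambda}$, use $g_0=0$ on $\partial D_R$ to reduce $\partial_n G_0$ to $\sqrt{\lambda(0)\lambda(R)}\,\partial_n g_0$, and then exploit that $\lambda\,\partial_n g_0\,ds$ has total mass $1$. The only notable difference is that the paper carries out the last step pointwise (showing $\lambda\,\partial_n g_0=(2\pi R)^{-1}$ via the constancy of radial functions on $\partial D_R$) rather than in integrated form, and it includes an explicit verification that $u\ge 0$ from the nondecreasing hypothesis, which you only allude to.
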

\begin{proof}
  First we should verify that $u\geq 0$; since $\lambda$ is nondecreasing we have for any $R > 0$
  \begin{equation*}
    \sqrt{\lambda(0)} \leq \sqrt{\lambda(R)} = \frac{1}{2\pi R} \int_{\partial D_R} \sqrt{\lambda} \, ds,
  \end{equation*}
  where we have used the fact that $\lambda$ is constant on any circle $\partial D_R$. We conclude that $\sqrt{\lambda}$ is subharmonic,
  so $u\geq 0$. Note that this argument shows the hypothesis that $\lambda$ is nondecreasing is necessary to have $u\geq 0$.
  
  The Green function $G_0$ of $\Delta - u$ on $D_R$ with singularity at 0 must be radial by the rotational symmetry of $\Delta,u$,
  and $D_R$. Thus $\partial_n G_0$ is constant on $\partial D_R$. If elliptic growth of $D_R$ occurs via the operator $\Delta-u$
  with singularity at 0, then $\partial D_R$ moves outward in all directions at an equal rate. Therefore we produce an increasing
  family of disks.
  
  If $g_0$ denotes the Green function of $\nabla\lambda\nabla$, then lemma \ref{conversion_lemma} yields
  \begin{equation*}
    G_0(z) = g_0(z)\sqrt{\lambda(0)\cdot \lambda(z)}.
  \end{equation*}
  Note that $g_0$ is also radial.
  For $\zeta\in\partial D_R$ we have
  \begin{equation*}
    \partial_n G_0(\zeta) = g_0(\zeta)\partial_n\sqrt{\lambda(0)\cdot \lambda(\zeta)} + \partial_n g_0(\zeta)\sqrt{\lambda(0)\cdot
      \lambda(\zeta)}
    = \partial_n g_0(\zeta)\sqrt{\lambda(0)\cdot \lambda(\zeta)},
  \end{equation*}
  where we have used the fact that $g_0 = 0$ on $\partial D_R$. As $\lambda$ and $\partial_n g_0$ are constant on $\partial D_R$,
  we have
  \begin{align*}
    2\pi R \lambda \partial_n g_0
    &= \int_{\partial D_R} \lambda \partial_n g_0 \, ds
    = \int_{\partial D_R} \left(\lambda \partial_n g_0 +g_0\partial_n\lambda\right)\, ds \\
    &= \int_{\partial D_R} \partial_n \left(\lambda g_0\right)\, ds
    = \int_{D_R} \nabla\lambda\nabla g_0\, dA \\
    &= 1.
  \end{align*}
  Thus $\lambda \partial_n g_0 = (2\pi R)^{-1}$, so we can write
  \begin{equation*}
    \partial_n G_0(\zeta) = \frac{\sqrt{\lambda(0)}}{\sqrt{\lambda(\zeta)}}\cdot \lambda(\zeta) \partial_n g_0(\zeta)
      = \frac{\sqrt{\lambda(0)}}{2\pi R\sqrt{\lambda(R)}}.
  \end{equation*}
  Finally we have
  \begin{equation*}
    \frac{d}{dt} \mathrm{area}(D_R) = 2\pi R \cdot \frac{dR}{dt} = 2\pi R \cdot \partial_n G_0 = \frac{\sqrt{\lambda(0)}}{\sqrt{\lambda(R)}},
  \end{equation*}
  as desired.
\end{proof}
\begin{corollary}
  Suppose a disk of radius $R>0$ undergoes elliptic growth via the Helmholtz operator $\Delta - 1$ with singularity at the center.
  The area changes at the rate
  \begin{equation*}
    \frac{d}{dt} \mathrm{area}(D_R) = \frac{1}{I_0(R)},
  \end{equation*}
  where $I_0$ denotes the zeroth--order modified Bessel function of the first kind.
\end{corollary}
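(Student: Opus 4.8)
The plan is to realize the Helmholtz operator $\Delta - 1$ (so that the potential is $u \equiv 1$) as a special case of Theorem \ref{example_maker}; that is, to exhibit a smooth, positive, nondecreasing radial function $\lambda$ for which $\Delta\sqrt{\lambda}/\sqrt{\lambda} \equiv 1$. Writing $f = \sqrt{\lambda}$ and using $\Delta f = f'' + r^{-1}f'$ for radial $f$, the condition $\Delta f = f$ becomes the modified Bessel equation of order zero,
\begin{equation*}
  f''(r) + \frac{1}{r}f'(r) - f(r) = 0.
\end{equation*}
Its solution space is spanned by $I_0(r)$ and $K_0(r)$. Since $K_0$ has a logarithmic singularity at the origin, the requirement that $\lambda$ be smooth—in particular finite and nonzero at $0$—forces $f$ to be a positive constant multiple of $I_0$, and that constant is irrelevant since it cancels in the ratio $\sqrt{\lambda(0)}/\sqrt{\lambda(R)}$. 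So I would simply take $\lambda(r) = I_0(r)^2$.

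Next I would verify that this $\lambda$ meets the hypotheses of Theorem \ref{example_maker}. It is smooth because $I_0$ is entire; it is positive because $I_0(r) \geq I_0(0) = 1$; and it is nondecreasing because $I_0'(r) = I_1(r) \geq 0$ for $r \geq 0$, whence $\lambda'(r) = 2 I_0(r) I_1(r) \geq 0$. By construction the associated potential is $u = \Delta\sqrt{\lambda}/\sqrt{\lambda} = 1$, the Helmholtz potential, so Theorem \ref{example_maker} applies verbatim.

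Applying that theorem then gives immediately
\begin{equation*}
  \frac{d}{dt}\mathrm{area}(D_R) = \frac{\sqrt{\lambda(0)}}{\sqrt{\lambda(R)}} = \frac{I_0(0)}{I_0(R)} = \frac{1}{I_0(R)},
\end{equation*}
using the standard normalization $I_0(0) = 1$. There is no genuine obstacle beyond recognizing the radial equation $\Delta\sqrt{\lambda} = \sqrt{\lambda}$ as the modified Bessel equation of order zero and selecting the solution regular at the origin; the monotonicity of $I_0$ (equivalently, the positivity of $I_1$) is the only hypothesis of Theorem \ref{example_maker} that warrants an explicit word of justification.
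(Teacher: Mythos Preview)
Your proposal is correct and follows essentially the same route as the paper: both reduce the problem to finding a radial $\lambda$ with $\Delta\sqrt{\lambda}/\sqrt{\lambda}=1$, recognize the resulting ODE as the modified Bessel equation of order zero, select the solution $I_0$ regular at the origin, and invoke Theorem~\ref{example_maker}. If anything, your version is slightly more careful in explicitly verifying the nondecreasing hypothesis on $\lambda$ via $I_0'=I_1\geq 0$, which the paper leaves implicit.
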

\begin{proof}
  Using translational symmetry of the Helmholtz operator, let's assume the center of the disk is the origin.
  We can use theorem \ref{example_maker} if we find a radial function $\lambda=\lambda(r)$ so that
  \begin{equation*}
    \frac{\Delta\sqrt{\lambda}}{\sqrt{\lambda}} = 1.
  \end{equation*}
  Writing the Laplacian in polar coordinates gives
  \begin{equation*}
    \left(\partial_r^2 + r^{-1} \partial_r \right)\sqrt{\lambda} = \sqrt{\lambda},
  \end{equation*}
  which rearranges into
  \begin{equation*}
    \left(r^2\partial_r^2 + r \partial_r -r^2 \right)\sqrt{\lambda} = 0.
  \end{equation*}
  This is the modified Bessel equation of order 0; as $\lambda$ must be bounded near 0, we choose the solution
  \begin{equation*}
    \sqrt{\lambda(r)} = I_0(r).
  \end{equation*}
  Since $I_0(0) = 1$, the result follows from theorem \ref{example_maker}.
\end{proof}
\begin{corollary}
  Suppose a disk of radius $R>0$ centered at the origin undergoes elliptic growth via the Schr\"odinger
  operator $\Delta - 4(r^2+1)$ with singularity at the center.
  The area changes at the rate
  \begin{equation*}
    \frac{d}{dt} \mathrm{area}(D_R) = \exp(-R^2).
  \end{equation*}
\end{corollary}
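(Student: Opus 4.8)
The plan is to reduce to Theorem \ref{example_maker}, exactly as in the previous corollary. Using translational symmetry, we may center the disk at the origin, so the Schr\"odinger potential is the radial function $u(r) = 4(r^2+1)$. To apply Theorem \ref{example_maker} I need a smooth, positive, nondecreasing radial function $\lambda = \lambda(r)$ for which
\begin{equation*}
  \frac{\Delta\sqrt{\lambda}}{\sqrt{\lambda}} = 4(r^2+1).
\end{equation*}
Writing the Laplacian in polar coordinates for a radial function, this is the ODE
\begin{equation*}
  \left(\partial_r^2 + r^{-1}\partial_r\right)\sqrt{\lambda} = 4(r^2+1)\sqrt{\lambda}.
\end{equation*}

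The quickest route is to read off the candidate from the desired conclusion: Theorem \ref{example_maker} gives a growth rate $\sqrt{\lambda(0)}/\sqrt{\lambda(R)}$, and matching this with $\exp(-R^2)$ suggests $\sqrt{\lambda(r)} = e^{r^2}$ (normalized so that $\sqrt{\lambda(0)}=1$). I would then verify this directly: with $w = e^{r^2}$ one has $w' = 2r e^{r^2}$ and $w'' = (4r^2+2)e^{r^2}$, so $w'' + r^{-1}w' = (4r^2+4)e^{r^2} = 4(r^2+1)w$, as required. (If one prefers to derive rather than guess, the substitution $\sqrt{\lambda} = e^{r^2} v$ reduces the equation to $v'' + (4r + r^{-1})v' = 0$, whose solutions have $v' \propto r^{-1}e^{-2r^2}$; boundedness of $v$ near the origin forces $v' \equiv 0$, so up to scaling $\sqrt{\lambda} = e^{r^2}$ is the only admissible solution.) Hence $\lambda(r) = e^{2r^2}$, which is manifestly smooth, strictly positive, and increasing in $r$, so all hypotheses of Theorem \ref{example_maker} are met; in particular $u = 4(r^2+1) \ge 0$ is automatic, consistent with $\sqrt{\lambda}$ being subharmonic.

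Finally, Theorem \ref{example_maker} yields
\begin{equation*}
  \frac{d}{dt}\mathrm{area}(D_R) = \frac{\sqrt{\lambda(0)}}{\sqrt{\lambda(R)}} = \frac{e^{0}}{e^{R^2}} = \exp(-R^2),
\end{equation*}
which is the claim. I do not anticipate any real obstacle here: the content is entirely the identification of the auxiliary function $\lambda$, and that is settled either by the one-line verification above or by the short ODE argument. The only point deserving a word of care is confirming that $\lambda(r) = e^{2r^2}$ is nondecreasing, which is needed both for the hypotheses of Theorem \ref{example_maker} and for the sign condition $u \ge 0$; this is immediate.
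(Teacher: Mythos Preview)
Your proof is correct and follows exactly the paper's approach: set $\lambda(r)=e^{2r^2}$, verify $\Delta\sqrt{\lambda}/\sqrt{\lambda}=4(r^2+1)$ in polar coordinates, and invoke Theorem~\ref{example_maker} with $\lambda(0)=1$. The extra ODE-uniqueness remark and the explicit check that $\lambda$ is nondecreasing are harmless elaborations on what the paper does in one line.
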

\begin{proof}
  Let $\lambda(r) = \exp(2r^2)$, from which we compute
  \begin{equation*}
    \frac{\Delta\sqrt{\lambda}}{\sqrt{\lambda}} = \frac{(\partial^2_r + r^{-1}\partial_r)\exp(r^2)}{\exp(r^2)} = 4(r^2+1).
  \end{equation*}
  Since $\lambda(0) = 1$, the result follows from theorem \ref{example_maker}.
\end{proof}

\section{Inverse Problems}
\subsection{A Local Inverse Problem}
As a generalization of Laplacian growth, elliptic growth should produce a wider range of phenomena.
For example Laplacian growth always preserves disks, but a non--radial permeability function can cause a growing
disk to break its rotational symmetry. Thus we are led to the following question:
Which families of nested, growing domains can be produced by an elliptic growth process?
Our first approach to addressing this question is to consider the local problem for growth
of Schr\"odinger type. Theorem \ref{NormalTheorem} tells us that
\begin{equation*}
  \delta\partial_n g^*_w(\zeta) = \int_D ug_wP_\zeta\, dA,
\end{equation*}
so we are led to the following result, which is proven in \cite{Martin2}.
\begin{theorem}
  Let $D \subset \C$ be a bounded domain with smooth, analytic boundary and fix $w\in D$.
  % Suppose that $u\in C^\infty(\overline{D})$ is a positive function.
  Define
  \begin{equation*}
     Au(\zeta) = \int_D u(z)g(z,w)P(z,\zeta)\, dA(z).
  \end{equation*}
  Then $A$ is a linear map $L^2(D)\to L^2(\partial D)$ with dense range.
\end{theorem}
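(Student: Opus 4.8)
The plan is to obtain boundedness of $A$ by duality and then deduce dense range from injectivity of the adjoint. First I would pin down the domain of definition: for $u\in C^\infty(\overline D)$ the integral $Au(\zeta)=\int_D u(z)\,g(z,w)\,P(z,\zeta)\,dA(z)$ converges for every $\zeta\in\partial D$, since $g(\cdot,w)$ contributes only an integrable logarithmic singularity at $w$, while $z\mapsto P(z,\zeta)=\partial_n g_z(\zeta)$ satisfies $|P(z,\zeta)|\le C\,\mathrm{dist}(z,\partial D)/|z-\zeta|^2\le C/|z-\zeta|$, which is area-integrable near $\zeta$. The central computation is that for smooth $u$ and bounded $\psi$, Fubini's theorem (legitimate because $\int_{\partial D}P(z,\zeta)\,ds(\zeta)=1$ for each $z$ and $g(\cdot,w)\in L^1(D)$) gives
\begin{equation*}
  \langle Au,\psi\rangle_{L^2(\partial D)}
  =\int_D u(z)\,g(z,w)\left(\int_{\partial D}\psi(\zeta)\,P(z,\zeta)\,ds(\zeta)\right)dA(z)
  =\langle u,\;g(\cdot,w)\,\mathcal P\psi\rangle_{L^2(D)},
\end{equation*}
where $\mathcal P\psi$ denotes the harmonic (Poisson) extension of $\psi$ to $D$; this is precisely the Poisson representation, since $P(z,\cdot)\,ds$ is harmonic measure at $z$.

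Next I would prove the estimate $\|g(\cdot,w)\,\mathcal P\psi\|_{L^2(D)}\le C\|\psi\|_{L^2(\partial D)}$. Once this is in hand, the displayed identity shows $|\langle Au,\psi\rangle|\le C\|u\|_{L^2}\|\psi\|_{L^2}$ for all smooth data, so $A$ extends to a bounded operator $L^2(D)\to L^2(\partial D)$, and its adjoint is the (now bounded) map $A^*\psi=g(\cdot,w)\,\mathcal P\psi$. The estimate itself works because the two singular regions of the product are disjoint: near $\partial D$ the factor $g(\cdot,w)$ is continuous and bounded (it vanishes on $\partial D$), while $\mathcal P\psi\in L^2(D)$ with $\|\mathcal P\psi\|_{L^2(D)}\le C\|\psi\|_{L^2(\partial D)}$ by the standard Poisson-integral bound; on a compact subset of $D$ away from $w$ both factors are bounded by interior estimates for harmonic functions; and near $w$ the factor $\mathcal P\psi$ is bounded (interior regularity, $w$ being an interior point) while $g(\cdot,w)$ has only a logarithmic, hence $L^2$, singularity. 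Summing the three pieces gives the bound, and I expect this to be the only real work. Note that $A$ is \emph{not} Hilbert--Schmidt --- the kernel $g(z,w)P(z,\zeta)$ is not square-integrable on $D\times\partial D$, since for each fixed $\zeta$ the function $z\mapsto P(z,\zeta)$ just fails to lie in $L^2(D)$ near $\zeta$ --- so boundedness cannot be read off the kernel, and it is precisely the dualization that separates the two singularities and renders the bound elementary.

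Finally, dense range is equivalent to injectivity of $A^*$, which is now immediate. If $A^*\psi=0$, then $g(z,w)\,\mathcal P\psi(z)=0$ for a.e.\ $z\in D$; by the maximum principle $g(\cdot,w)$ is strictly negative on $D\setminus\{w\}$, so $\mathcal P\psi$ vanishes a.e.\ on $D$, and being harmonic it is identically zero. Its nontangential boundary values therefore vanish, but these agree almost everywhere on $\partial D$ with $\psi$ (Fatou's theorem for Poisson integrals of $L^2$ data on a smooth domain), so $\psi=0$. Hence $\ker A^*=\{0\}$ and $\overline{\mathrm{ran}\,A}=L^2(\partial D)$.
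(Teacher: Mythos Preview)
The paper does not actually contain a proof of this theorem; it only records that the result ``is proven in \cite{Martin2}'' and moves on. So there is no in--paper argument to compare against, and I can only evaluate your proposal on its own merits.

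Your main line is sound. The duality computation $\langle Au,\psi\rangle=\langle u,\,g(\cdot,w)\mathcal P\psi\rangle$ via Fubini and the Poisson representation is correct, and the three--region estimate for $\|g(\cdot,w)\mathcal P\psi\|_{L^2(D)}$ (boundary strip, interior annulus, neighborhood of $w$) gives boundedness of $A^*$ and hence of $A$. The injectivity of $A^*$ is exactly as you say: $g(\cdot,w)<0$ on $D\setminus\{w\}$ by the maximum principle, so $g(\cdot,w)\mathcal P\psi=0$ a.e.\ forces $\mathcal P\psi\equiv 0$, whence $\psi=0$ a.e.\ on $\partial D$ by Fatou's theorem. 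Dense range follows.

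One correction to your parenthetical remark: the operator $A$ \emph{is} Hilbert--Schmidt, so boundedness is in fact available directly from the kernel. You argue that $z\mapsto P(z,\zeta)$ just misses $L^2(D)$ near $\zeta$, which is true, but you forget that the other factor $g(\cdot,w)$ vanishes to first order on $\partial D$ (it is smooth up to the analytic boundary and zero there, with nonzero normal derivative). In boundary coordinates $(s,t)$ with $t=\mathrm{dist}(z,\partial D)$ one has $g(z,w)P(z,\zeta)\sim t\cdot t/((s-s_\zeta)^2+t^2)$, and
\[
\int_0^\delta\!\!\int_0^L \frac{t^4}{((s-s_\zeta)^2+t^2)^2}\,ds\,dt
\;\le\; C\int_0^\delta t^4\cdot t^{-3}\,dt<\infty,
\]
uniformly in $\zeta$; the contribution near $w$ is handled by the $L^2$ logarithm. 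Thus $g(z,w)P(z,\zeta)\in L^2(D\times\partial D)$. This does not damage your proof---your duality argument still works and the dense--range step is unchanged---but the aside claiming the kernel route is unavailable is mistaken.
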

As we preturb the Laplacian into nearby Schr\"odinger operators $\Delta - \epsilon u$, the function $Au$ gives the first order
corrections to the velocity profile at the boundary. Hence locally, we can get ``almost all'' velocity profile corrections
via an appropriate choice of $u$ and $\epsilon$.

There is another, unsolved question related to injectivity of this map from operators to boundary velocities.
It is known that elliptic growth for various operators can produce the same growth process---for instance a radial permeability
function $\lambda$ will preserve disks in elliptic growth of Laplace--Beltrami type, while corollary \ref{LB_area_growth}
states that the growth occurs at a rate independent of $\lambda$. Hence any radial lambda produces the same growth
dynamics for a disk. It is unknown if there are other sources of noninjectivity, and in particular none are known for
Schr\"odinger--type growth. We are led to the following conjecture.
\begin{conjecture}
  If $D(t)$ is described by two elliptic growth processes with respect to operators $\Delta - u_1$ and $\Delta - u_2$,
  then $u_1=u_2$. Restated into purely function--theoretic terms, if $g_w$ and $g_w^*$ are Green functions of
  $\Delta-u_1$ and $\Delta-u_2$ respectively with the same singularity and $\partial_ng_w = \partial_n g_w^*$
  everywhere on $\partial D$, then $u_1=u_2$.
\end{conjecture}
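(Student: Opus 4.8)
This is a single--measurement inverse problem of Calder\'on type, so I would not expect an elementary proof; the plan below isolates what I believe is the genuine analytic obstruction and indicates the partial cases I would settle first. The first step is to reduce the statement to a fact about Cauchy data. Put $h = g_w - g_w^*$. The two Green functions carry the same logarithmic singularity at $w$, so the singularities cancel and $h$ extends to a $C^{1,\alpha}$ function on all of $D$, smooth in $D\setminus\{w\}$ by interior elliptic regularity and as smooth as the analytic boundary permits up to $\partial D$. The hypothesis is exactly that $h = 0$ and $\partial_n h = 0$ on $\partial D$. Using $(\Delta - u_1)g_w = \delta_w = (\Delta - u_2)g_w^*$ one obtains the two identities
\[
  (\Delta - u_2)h = (u_1 - u_2)\,g_w, \qquad (\Delta - u_1)h = (u_1 - u_2)\,g_w^* \qquad \text{in } D .
\]
Thus everything is controlled by $f := u_1 - u_2$ together with the overdetermined Cauchy condition on $h$, and the goal becomes to deduce $f\equiv 0$.

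Next I would record the orthogonality relation this produces and be candid about its limitations. Let $\varphi>0$ solve $(\Delta - u_2)\varphi = 0$ on $D$, continuous up to $\overline D$ (Lemma \ref{positive_lemma}). Green's second identity, together with $h=\partial_n h=0$ on $\partial D$, annihilates all boundary terms and gives $\int_D f\,\varphi\,g_w\,dA = 0$; more generally $\int_D f\,v\,g_w\,dA = 0$ for every $(\Delta-u_2)$--harmonic $v$, and symmetrically $\int_D f\,v\,g_w^*\,dA = 0$ for every $(\Delta-u_1)$--harmonic $v$. Since $\varphi g_w<0$ in $D$, the first relation already forces $f$ to change sign unless $f\equiv 0$. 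But these relations cannot finish the job for a fixed domain: the $(\Delta-u_2)$--harmonic test functions form an infinite--dimensional subspace of $L^2(D)$ of infinite codimension, and indeed the linearized map $u\mapsto \int_D u\,g_w\,P_\zeta\,dA$ appearing in Theorem \ref{NormalTheorem} fails to be injective on $L^2(D)$ --- for instance $u=\Delta\psi/g_w$ with $\psi\in C_c^\infty(D\setminus\{w\})$ lies in its kernel. The extra information one must exploit is that, in the growth--uniqueness statement, the normal derivatives agree not on a single boundary but on the whole nested family $\{D(t)\}$, i.e.\ on a one--parameter family of subdomains sweeping out a region; this is what upgrades ``one measurement'' toward ``enough measurements.''

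The final and hardest step is to propagate uniqueness through the family. Differentiating the weak (balayage) formulation of Section~1 in $t$, or using the zero--curvature relations recorded there, the matching of boundary velocities at every $t$ gives $\int_{D(t)} f\,v\,g_w^{D(t)}\,dA = 0$ for all $v$ with $(\Delta-u_2)v=0$ on $D(t)$, for each $t$. One reads $f(w)$ off the subleading behaviour of the two area--growth rates as $t\to 0$ --- sharpening the fact that $\lim_{t\to 0}\frac{d}{dt}\mathrm{area}(D(t))=1$, in the spirit of Theorem \ref{example_maker} --- and then argues inductively that if $f$ vanishes on $D(t_0)$ it vanishes on $D(t_0+\delta)$, the inner region being pinned while $v$ ranges over $(\Delta-u_2)$--harmonic functions on the larger domain. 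The cleanest realization of the inductive step I can see passes, via Lemma \ref{conversion_lemma}, to a pair of Laplace--Beltrami coefficients and invokes a Carleman estimate with weight adapted to the exhaustion by the $D(t)$, in the style of Bukhgeim--Klibanov single--measurement arguments; the pseudoconvexity of $\partial D(t)$ relative to $w$ that such an estimate requires should follow from $\partial_n g_w>0$, i.e.\ from the boundary moving strictly outward.

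The crux --- and the reason this is still a conjecture --- is precisely this estimate. Because $f$ is not sign--definite, the natural energy identity $\int_D(|\nabla h|^2+u_2 h^2) = -\int_D f\,g_w\,h$ does not close, and unique continuation from the Cauchy data of $h$ through the family has to be earned directly rather than read off from a coercive inequality. As partial evidence before attacking the general case I would first dispatch the real--analytic case, where a Holmgren--type argument applies to the extension of $h$ and the two identities above, and the rotationally symmetric case with $w$ at the centre, where separation of variables collapses the problem to an inverse Sturm--Liouville problem driven by the one--parameter family of boundary constants $\frac{1}{2\pi}(1+\int_{D(t)} u\,g_w\,dA)$, which is tractable and should already exhibit the role of positivity of $u$ in ruling out the spurious freedom seen at the linearized level.
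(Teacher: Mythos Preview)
The paper does not prove this statement: it is recorded as an open \emph{conjecture}, with no argument offered beyond the observation that for Laplace--Beltrami growth there are obvious sources of noninjectivity (all radial $\lambda$ produce the same disk evolution) while for Schr\"odinger growth none are known. There is therefore nothing to compare your proposal against; what follows is an assessment of the proposal on its own terms.

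Your preliminary reductions are correct. Setting $h=g_w-g_w^*$ does yield a function with zero Cauchy data on $\partial D$ satisfying $(\Delta-u_2)h=(u_1-u_2)g_w$, and the resulting orthogonality $\int_D f\,v\,g_w\,dA=0$ for all $(\Delta-u_2)$--harmonic $v$ is genuine and useful. Your example $u=\Delta\psi/g_w$ with $\psi\in C_c^\infty(D\setminus\{w\})$ correctly exhibits the nontrivial kernel of the linearized map from Theorem~\ref{NormalTheorem}, confirming that the single--domain version cannot be closed by these orthogonality relations alone. You are also right to flag that the paper's ``restatement'' quietly passes from a one--parameter family $\{D(t)\}$ to a single domain $D$; the family version supplies strictly more data, and your plan to propagate uniqueness through the exhaustion is the natural way to exploit it.

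That said, the proposal remains a strategy, not a proof, and you acknowledge this. Two points deserve caution. First, the zero--curvature relations you cite from the introduction are derived for \emph{Laplacian} growth via Hadamard's formula; the analogous identities for Schr\"odinger--type growth are not established in the paper and would need to be checked. Second, your claim that $f(w)$ can be ``read off'' the subleading area--growth asymptotics is plausible---as $D(t)\to\{w\}$ both Green functions approach the Laplacian one and the correction is $\sim u(w)$ times a geometric factor---but the matching of areas gives $\int u_1 g_w=\int u_2 g_w^*$ with \emph{different} Green functions, so extracting $f(w)$ cleanly requires controlling the next--order difference $g_w-g_w^*$ as well. The Carleman/Bukhgeim--Klibanov route you sketch is a reasonable line of attack, but as you say, the pseudoconvexity and weight choices would have to be made precise before this becomes a proof.
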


Next we prove a negative result. For a growing family of domains $\{D(t):t\geq 0\}$ to be an elliptic growth process driven by a
Laplace--Beltrami operator, the time derivative of the area of $D(t)$ must be 1. Generally this is not a major constraint,
since an appropriate time rescaling can force this condition. That is, unless the time derivative of area vanishes. This simple
observation yields the following theorem.
\begin{theorem}
  If $\{D(t):t\geq 0\}$ is a growing family of smooth, analytic domains such that
  \begin{equation*}
    \frac{d}{dt}\mathrm{area}(D(t))\bigg|_{t=t_0} = 0
  \end{equation*}
  at some time $t_0$, then no elliptic growth of Laplace--Beltrami type can drive the dynamics.
\end{theorem}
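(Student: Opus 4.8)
The plan is to argue by contradiction, leveraging the fact — already recorded in Corollary~\ref{LB_area_growth} — that an elliptic growth process of Laplace--Beltrami type forces the area of the evolving domain to change at a constant rate. Suppose, toward a contradiction, that some Laplace--Beltrami operator $L=\nabla\lambda\nabla$ (with $\lambda>0$ smooth near each $\overline{D(t)}$) and some interior singularity $w\in\bigcap_t D(t)$ drive the given dynamics in the sense of \eqref{EG_definition}.

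First I would invoke the Richardson analogue stated just above Corollary~\ref{LB_area_growth}, applied to the test function $\phi\equiv 1$. Since $L$ is in divergence form it annihilates constants, so $L\phi=0$ and $\phi$ is admissible; the theorem then yields $\frac{d}{dt}\,\mathrm{area}(D(t))=\phi(w)=1$ for every $t$ in the interval of existence (more generally, retaining a flow rate $Q$ in \eqref{EG_definition} one gets the constant $Q$). Evaluating at $t=t_0$ gives
\[
  0 \;=\; \frac{d}{dt}\,\mathrm{area}(D(t))\Big|_{t=t_0} \;=\; 1,
\]
which is absurd. Hence no such $L$ and $w$ exist, proving the claim.

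The only points that require a little care — and hence the closest thing to an obstacle — are bookkeeping of hypotheses rather than analysis. One should check that the regularity assumed here (smooth analytic $\partial D(t)$, growing family) suffices to legitimately run the Richardson computation: smoothness of the boundary guarantees a well-defined outward normal velocity $v_n=\lambda\,\partial_n g$ and the validity of the Green identity used in its proof. One should also dispatch the degenerate possibility that the flow rate vanishes: if $Q=0$ then $\nabla\lambda\nabla g=0$ in $D(t)$ together with $g|_{\partial D(t)}=0$ forces $g\equiv 0$ by uniqueness for the Dirichlet problem, hence $v_n\equiv 0$ and the family does not move at all, contradicting that it is genuinely growing. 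So in the relevant case the constant rate is nonzero, and the contradiction with the vanishing derivative at $t_0$ stands.
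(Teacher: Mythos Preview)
Your proposal is correct and follows essentially the same route as the paper. The paper does not give a formal proof; it simply remarks that Corollary~\ref{LB_area_growth} forces the area derivative to be the nonzero constant $1$ (up to time rescaling), so a vanishing area derivative at some $t_0$ is an obstruction---exactly the contradiction you set up, with your additional handling of the degenerate $Q=0$ case being a welcome piece of bookkeeping the paper leaves implicit.
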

\begin{example}
  Consider the unit disk $\D\subset \C$ as an ellipse with both foci at the center. We can produce a growing family of domains
  by separating the foci at a constant rate so that the center of the ellipse stays fixed; furthermore,
  we can preserve the length of the
  semiminor axis. For concreteness, say the foci are located at $\pm c(t)$ on the real axis, with $c(0) = 0$.
  The semimajor axis $b$ must change with $c$, via the relation
  \begin{equation*}
    2b\dot{b} = \frac{d}{dt}b^2= \frac{d}{dt}\left(a^2+c^2\right) = 2c\dot{c}.
  \end{equation*}
  Then the area changes at the rate
  \begin{equation*}
    \dot{A} = \frac{d}{dt}(\pi a b) = \frac{\pi a c \dot{c}}{b}.
  \end{equation*}
  But at $t=0$ we have $\dot{A}=0$, so no elliptic growth process of Laplace--Beltrami type can describe this family of domains.
\end{example}
With more work we can prove a similar theorem for growth of Schr\"odinger type. Even though the rate of area increase
is generally less than 1, this theorem shows it can never be 0.
\begin{theorem}
  If $\{D(t):t\geq 0\}$ is a growing family of smooth, analytic domains such that
  \begin{equation*}
    \frac{d}{dt}\mathrm{area}(D(t))\bigg|_{t=t_0} = 0
  \end{equation*}
  at some time $t_0$, then no elliptic growth of Schr\"odinger type can drive the dynamics.
\end{theorem}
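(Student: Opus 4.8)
The plan is to argue by contradiction, sharpening the inequality in Lemma~\ref{slow_growth}. Suppose the family $\{D(t)\}$ is driven by an elliptic growth process of Schr\"odinger type with operator $\Delta-u$, $u\geq 0$, and singularity at $w$. Exactly as in the proof of Lemma~\ref{slow_growth}, the rate of area change at any time $t$ equals $1+\int_{D(t)} u G_w\, dA$, where $G_w\leq 0$ denotes the Green function of $\Delta-u$ on $D(t)$. Since $u\geq 0$ and $G_w\leq 0$ this is at most $1$; the entire content of the theorem is that it is \emph{strictly positive}, hence never zero. A naive direct estimate of $\int_{D(t)} uG_w\, dA$ is circular: writing $uG_w=\Delta G_w-\delta_w$ and integrating simply returns $\int_{\partial D(t)}\partial_n G_w\, ds=\tfrac{d}{dt}\mathrm{area}(D(t))$, so one must exploit some hidden structure of $u$.

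That structure is supplied by Lemmas~\ref{positive_lemma} and~\ref{conversion_lemma}, just as in the proof of the theorem on the initial area rate of Schr\"odinger-type growth. Fix the time $t_0$ in the hypothesis. Choose a large open disk $\Omega$ with $\overline{D(t_0)}\subset\Omega$ lying in the region where $u$ is defined and smooth; by Lemma~\ref{positive_lemma} there is a positive $\varphi\in C^\infty(\Omega)$ with $(\Delta-u)\varphi=0$, and setting $\lambda=\varphi^2$ gives a positive smooth $\lambda$ on a neighborhood of $\overline{D(t_0)}$ with $u=\lambda^{-1/2}\Delta\lambda^{1/2}$. By Lemma~\ref{conversion_lemma}, $G_w(z)=g_w(z)\sqrt{\lambda(w)\lambda(z)}$ on $\overline{D(t_0)}$, where $g_w$ is the Green function of $\nabla\lambda\nabla$ on $D(t_0)$. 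Multiplying, $uG_w=\sqrt{\lambda(w)}\,g_w\,\Delta\sqrt{\lambda}$, so that
\begin{equation*}
  \int_{D(t_0)} uG_w\, dA = \sqrt{\lambda(w)}\int_{D(t_0)} g_w\,\Delta\sqrt{\lambda}\, dA .
\end{equation*}

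Next I would invoke the integration-by-parts identity already established in that earlier proof, namely
\begin{equation*}
  \int_{D(t_0)} g_w\,\Delta\sqrt{\lambda}\, dA = \int_{\partial D(t_0)} \sqrt{\lambda}\,\partial_n g_w\, ds - \frac{1}{\sqrt{\lambda(w)}} ,
\end{equation*}
which combines with the previous display to collapse everything to the clean formula
\begin{equation*}
  \frac{d}{dt}\mathrm{area}(D(t))\bigg|_{t=t_0} = 1 + \int_{D(t_0)} uG_w\, dA = \sqrt{\lambda(w)}\int_{\partial D(t_0)}\sqrt{\lambda}\,\partial_n g_w\, ds .
\end{equation*}
Since $\lambda\,\partial_n g_w\, ds$ is a probability measure on $\partial D(t_0)$, I rewrite the boundary integral as $\int_{\partial D(t_0)}\lambda^{-1/2}\,(\lambda\,\partial_n g_w)\, ds$ and bound it below by $\|\sqrt{\lambda}\|_{\infty,\overline{D(t_0)}}^{-1}>0$. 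Hence $\frac{d}{dt}\mathrm{area}(D(t))|_{t=t_0}\geq \sqrt{\lambda(w)}\,/\,\|\sqrt{\lambda}\|_{\infty,\overline{D(t_0)}}>0$, contradicting the assumption that it vanishes. (The identical computation with a general flow rate $Q$ in place of $1$ covers the other normalization, and $Q>0$ is forced by the family being growing.)

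The main obstacle is the circularity flagged in the first paragraph: the estimates one reaches for directly are tautological, and the key realization is that converting $\Delta-u$ into a Laplace--Beltrami operator via Lemmas~\ref{positive_lemma} and~\ref{conversion_lemma} breaks the loop, reducing the area rate to a boundary integral of $\sqrt\lambda$ against the harmonic-measure-type probability measure $\lambda\,\partial_n g_w\, ds$. Once that reduction is in hand, strict positivity---indeed a quantitative lower bound---is immediate, and the rest of the argument is bookkeeping already carried out in the excerpt.
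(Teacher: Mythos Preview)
Your proof is correct, and it shares the key maneuver with the paper's argument: using Lemmas~\ref{positive_lemma} and~\ref{conversion_lemma} to trade the Schr\"odinger Green function $G_w$ for the Laplace--Beltrami Green function $g_w$ of $\nabla\lambda\nabla$. Beyond that, the two arguments diverge in emphasis. The paper proceeds more directly: from $\int_{\partial D(t_0)}\partial_n G_w\,ds=0$ and $\partial_n G_w\geq 0$ it concludes $\partial_n G_w\equiv 0$ pointwise, transfers this via $G_w=g_w\sqrt{\lambda(w)\lambda}$ to obtain $\partial_n g_w\equiv 0$ on $\partial D(t_0)$, and then the divergence theorem yields $0=\int_{\partial D(t_0)}\lambda\,\partial_n g_w\,ds=\int_{D(t_0)}\nabla\lambda\nabla g_w\,dA=1$, a contradiction. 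Your route instead recycles the integration-by-parts identity from the preceding theorem to derive the exact formula $\tfrac{d}{dt}\mathrm{area}(D(t_0))=\sqrt{\lambda(w)}\int_{\partial D(t_0)}\sqrt{\lambda}\,\partial_n g_w\,ds$, and then bounds the right side below using that $\lambda\,\partial_n g_w\,ds$ is a probability measure. The paper's argument is shorter and avoids re-invoking the earlier computation, while yours buys more: you never need the pointwise vanishing step, the formula you obtain is valid at every time $t$, and you extract the quantitative lower bound $\tfrac{d}{dt}\mathrm{area}(D(t_0))\geq \sqrt{\lambda(w)}\big/\|\sqrt{\lambda}\|_{\infty,\overline{D(t_0)}}$, which is strictly stronger than the bare contradiction.
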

\begin{proof}
  Assume for the sake of contradiction that the family of domains $\{D(t):t\geq 0\}$ are generated by elliptic growth
  via the operator $\Delta - u$ and that at some time $t_0$ the time derivative of area vanishes. Since $\partial_n g_w \geq 0$
  along $\partial D(t)$ and
  \begin{equation*}
    0 = \frac{d}{dt}\int_{D(t)} dA\bigg|_{t=t_0} = \int_{\partial D(t_0)} \partial_n g_w\, ds,
  \end{equation*}
  we must have that $\partial_n g_w = 0$ everywhere on $\partial D(t_0)$.
  Using lemma \ref{positive_lemma}, we can find a smooth, positive function $\lambda$ so that $(\Delta- u)\sqrt{\lambda} = 0$. That is,
  \begin{equation*}
    u = \frac{\Delta \sqrt{\lambda}}{\sqrt{\lambda}}
  \end{equation*}
  throughout $D$. We proceed as in the proof of theorem \ref{example_maker}; let $G_w$ and $g_w$ denote
  the Green functions of $\Delta- u$ and $\nabla \lambda\nabla$, respectively. By lemma \ref{conversion_lemma},
  the Green functions can be related:
  \begin{equation*}
    G_w(z) = g_w(z) \sqrt{\lambda(w)\cdot\lambda(z)}.
  \end{equation*}
  Since $g_w = \partial_n g_w = 0$ on $\partial D(t_0)$, we deduce
  that $\partial_n G_w = 0$ everywhere on $\partial D(t_0)$. The divergence theorem yields
  \begin{equation*}
    0 = \int_{\partial D(t_0)} \lambda \partial_n G_w\, ds = \int_{D(t_0)} \nabla\lambda\nabla G_w\, dA = 1,
  \end{equation*}
  a contradiction.
\end{proof}
\begin{comment}
\begin{example}
  From the previous theorems it might seem that a growth process might fail to be elliptic only because of an isolated instance of an
  intermediate domain growing irregularly.
  This however, is not the case. Let $f:[0,1]\to \R$ be a continuous, strictly increasing function with derivative zero on a dense set;
  examples of such functions are well--known.
  Then if the unit disk grows in a way such that its area at time $t\in[0,1]$ is
  $A(t) = \pi + f(t)$, then on no subinterval $I\subset [0,1]$ can elliptic growth drive the process.
\end{example}
\end{comment}

\subsection{Relation to the Calder\'on Problem}
Consider a conducting body $D\subseteq \C$ with a nonconstant isotropic conductivity function $\lambda\in C^2(\overline{D})$. If $D$
is made of a nonhomogeneous material, $\lambda$ will reflect this; conversely, knowledge of $\lambda$ indicates inhomogeneities
within the material. If $D$ is an object which we do not wish to deconstruct---such as a living creature,
as in the case of medical imaging---we might wish to determine $\lambda$ from measurements made at the boundary of $D$.
If we induce a voltage potential $f\in H^{1/2}(\partial D)$, then the power required to maintain the potential is
\begin{equation*}
  Q_\lambda(f) = \int_{\partial D} f\lambda\partial_n u\, ds,
\end{equation*}
where $u$ solves the Dirichlet problem $\nabla\lambda\nabla u = 0$ in $D$ and $u=f$ on $\partial D$.
The functional $Q_\lambda$ is readily measured, so we might ask if we can construct $\lambda$ from knowledge of $Q_\lambda$.
This problem was originally stated by Calder\'on in \cite{Calderon} and is now known as the Calder\'on problem.

Another way of stating the problem involves the so--called Dirichlet--to--Neumann map $N_\lambda: H^{1/2}(\partial D) \to
H^{-1/2}(\partial D)$,
which maps $f\mapsto \partial_n u$, where $u$ solves the aforementioned Dirichlet problem.
For an induced voltage potential $f$, the function $N_\lambda f$ is the resulting electric field at the boundary.
Since the field can be measured, can we construct $\lambda$ from knowledge of $N_\lambda$?
Much work has been done on this problem; for example, in \cite{Calderon_soln} the authors show that a solution to the problem
is unique for $\lambda$ smooth, while in \cite{Nachman} the author discusses constructive methods to determining $\lambda$.
We will now describe a different inverse problem related to elliptic growth which can be reduced to the same
question.

Suppose we have a fluid domain $D\subseteq \C$ occupying a region with a nonconstant, isotropic permeability function
$\lambda \in C^\infty(\overline{D})$.
If $\lambda$ is unknown, we might have a way of determining it from observing the movement of the fluid;
that is, if we pump an infinitesimal amount of fluid at a point $p\in D$, we can observe the resulting fluid velocity at the boundary.
Are these observations enough to construct $\lambda$?

If $g$ denotes the Green function of $D$ for the operator $\nabla\lambda\nabla$, then pumping at a point $p\in D$
yields a boundary velocity profile $V(p) = \lambda\partial_n g_p$,
so we have knowledge of the map $V: p\mapsto \lambda \partial_n g_p$.
Notice that $\lambda\partial_n g$ is the Poisson kernel for $\nabla\lambda\nabla$, and as such
\begin{equation*}
  u(p) = \int_{\partial D} f V(p)\, ds
\end{equation*}
solves the Dirichlet problem $\nabla\lambda\nabla u = 0$ in $D$ and $u=f$ on $\partial D$. From here we can construct
the Dirichlet--to--Neumann map $N_\lambda f = \partial_n u$, reducing the problem to that of Calder\'on.
This is not the only way of approaching this inverse problem, but it is a nice connection to a known topic.
To summarize, we give the following theorem.
\begin{theorem}
  Suppose we have a bounded $C^2$ domain $D\subseteq \C$ with an unknown nonconstant isotropic permeability function
  $\lambda \in C^\infty(\overline{D})$. Assume we have knowledge of a pumping response function $\phi: D\to C(\partial D)$
  which maps a point $p\in D$ to the resulting velocity profile when fluid is pumped into $D$ at $p$; that is,
  $\phi(p) = \lambda\partial_n g_p$. Then the problem of finding $\lambda$ from $\phi$ reduces to that of Calder\'on (and hence
  is solvable).
\end{theorem}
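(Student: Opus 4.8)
The plan is to turn the reduction sketched just above the statement into a rigorous argument, in essence three steps: express the Dirichlet solution operator for $\nabla\lambda\nabla$ purely in terms of the measured data $\phi$, pass from that operator to the Dirichlet--to--Neumann map $N_\lambda$, and then invoke the known resolution of the Calder\'on problem. For the first step, fix smooth boundary data $f$ and let $u$ solve $\nabla\lambda\nabla u = 0$ in $D$ with $u = f$ on $\partial D$. Applying Green's identity for $L = \nabla\lambda\nabla$ to the pair $u, g_p$, where $g_p$ is the Green function with $Lg_p = \delta_p$ and $g_p = 0$ on $\partial D$, and using $Lu = 0$ together with the boundary values of $g_p$ and $u$, one obtains
\begin{equation*}
  u(p) = \int_{\partial D} \lambda\, u\, \partial_n g_p\, ds = \int_{\partial D} f(\zeta)\, \phi(p)(\zeta)\, ds(\zeta),
\end{equation*}
since $\phi(p)(\zeta) = \lambda(\zeta)\partial_n g_p(\zeta)$ is exactly the Poisson kernel of $\nabla\lambda\nabla$ (the choice $f\equiv 1$ recovers the fact that $\phi(p)\,ds$ is a probability measure on $\partial D$). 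The right--hand side uses only the known $\phi$, so $\phi$ determines the Poisson solution operator $P_\lambda : f\mapsto u$; one passes from smooth $f$ to general $f\in H^{1/2}(\partial D)$ by density.

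For the second step, since $\partial D$ is $C^2$ and $\lambda\in C^\infty(\overline D)$, elliptic regularity up to the boundary gives $u\in C^1(\overline D)$ for smooth $f$, so that $N_\lambda f = \partial_n u|_{\partial D}$ is recovered from $P_\lambda$ by the fixed bounded procedure $N_\lambda = \gamma_1\circ P_\lambda$ with $\gamma_1$ the normal--derivative trace; recovering $P_\lambda$ thus recovers $N_\lambda$. Equivalently, one may differentiate the displayed identity in $p$ along the inward normal and let $p\to p_0\in\partial D$ to produce the Schwartz kernel of $N_\lambda$ away from the diagonal, the diagonal singularity being understood distributionally. With $N_\lambda$ in hand the third step is immediate: the boundary values of $\lambda$ follow by the standard boundary--determination argument, and $\lambda$ itself is then reconstructed by the known solution of the Calder\'on problem for smooth conductivities \cite{Calderon_soln, Nachman}.

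I expect the only genuine obstacle to be the second step, namely that $\phi$ is interior information ($u$ at points of $D$) whereas $N_\lambda$ is a boundary object, so one must justify that the boundary normal derivative is truly determined, which amounts to controlling $\phi(p)$ as $p\to\partial D$. The cleanest way around this is the operator--theoretic formulation $N_\lambda = \gamma_1\circ P_\lambda$ with both maps bounded on the usual Sobolev scale, which reduces the matter entirely to the first step's identification of $\phi$ as the kernel of $P_\lambda$.
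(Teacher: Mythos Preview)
Your proposal is correct and follows essentially the same route as the paper: identify $\phi(p)=\lambda\partial_n g_p$ as the Poisson kernel for $\nabla\lambda\nabla$, use it to reconstruct the Dirichlet solution operator via $u(p)=\int_{\partial D} f\,\phi(p)\,ds$, take the normal trace to obtain $N_\lambda$, and then invoke the known resolution of the Calder\'on problem. The paper presents this as an informal paragraph preceding the theorem rather than a formal proof, so your version---with the explicit Green's identity computation, the density passage to $H^{1/2}(\partial D)$, and the elliptic regularity justification for $N_\lambda=\gamma_1\circ P_\lambda$---is in fact more detailed than what the paper offers.
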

As a final remark in this brief section, we note that there is much interest in the Calder\'on problem with only partial data---such
as measurements on only fractions of the boundary (for example, see \cite{Nachman2}).
We already know that the pumping response of a domain at a single point
cannot detect the permeability (again due to the example of all radial permeabilities growing disks in the same way), but
what about knowldege of the pumping response function at two points? Or at a finite number of points?
\begin{conjecture}
  If both $\lambda\partial_n g_w$ and $\lambda\partial_n g_\xi$ are known for distinct points $w,\xi\in D$ then $\lambda$
  can be determined.
\end{conjecture}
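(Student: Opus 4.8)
\noindent The plan is to recast the conjecture as an injectivity statement for the nonlinear data map
$\Lambda\colon \lambda \mapsto \bigl(\lambda\,\partial_n g^\lambda_w,\ \lambda\,\partial_n g^\lambda_\xi\bigr)$,
which sends a positive $\lambda\in C^\infty(\overline D)$ to a pair of functions on $\partial D$, where $g^\lambda_p$ is the Green function of $\nabla\lambda\nabla$ on $D$ with singularity at $p$. I would attack this in two stages: first the linearization of $\Lambda$ about $\lambda\equiv 1$, and then the passage to the full nonlinear statement, which I expect to be the real difficulty. One tempting shortcut---imitating the reduction to Calder\'on carried out for the \emph{full} pumping--response function $\phi$---does not work here: for an $L$--harmonic $u$ with boundary data $f$, Green's identity only recovers $u(w)=\int_{\partial D} f\,\lambda\partial_n g_w\,ds$ and $u(\xi)=\int_{\partial D} f\,\lambda\partial_n g_\xi\,ds$, i.e.\ the value of $u$ at two interior points, which is nowhere near enough to reconstruct the Dirichlet--to--Neumann map $N_\lambda$. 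So the plan must instead exploit the rigidity peculiar to having \emph{two} Green functions of the \emph{same} operator.

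For the linearized problem, write $\lambda = 1+\epsilon p$ and differentiate the Laplace--Beltrami perturbation formula to the boundary, i.e.\ combine Theorem~\ref{Beltrami} (formula \eqref{Beltrami2}) with Lemma~\ref{NormalLemma}. Since $g^\lambda_w$ vanishes on $\partial D$, the first--order term of $\lambda\,\partial_n g^\lambda_w$ at $\zeta\in\partial D$ is an explicit linear functional $\ell_w(p)(\zeta)=\tfrac12\partial_n g_w(\zeta)\bigl(p(\zeta)-p(w)\bigr)+\tfrac12\int_D \Delta p\, g_w P_\zeta\, dA$, and likewise $\ell_\xi(p)$. Linearized injectivity means: $\ell_w(p)\equiv\ell_\xi(p)\equiv 0$ on $\partial D$ forces $p\equiv 0$. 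I would test these boundary identities against suitable families of boundary functions---e.g.\ $\partial_n g_a\, ds$ for interior points $a$, or boundary data of harmonic functions---to recast them as interior relations among products of Green functions and Poisson kernels, and then establish the required completeness by an argument modeled on the dense--range theorem for the operator $A$ stated above, now for the \emph{pair} $(w,\xi)$. The case in which $D$ is a disk and $p$ is expanded in circular harmonics is a good sanity check, and already there one sees why two points, not one, are needed; establishing the linearized statement in general would itself be a worthwhile partial result.

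For the nonlinear problem I would first try to extract more usable boundary data than the naive Green's--identity reduction gives. Because $g_w=g_\xi=0$ on $\partial D$, L'H\^opital along the inward normal shows that the ratio $\rho:=(\lambda\,\partial_n g_w)/(\lambda\,\partial_n g_\xi)$---which is \emph{known}, the factors $\lambda(\zeta)$ cancelling---is the boundary trace of $h:=g_w/g_\xi$, and $h$ satisfies a degenerate divergence--form equation with coefficient $\lambda g_\xi$ (vanishing on $\partial D$) together with a first--order term. Converting to Schr\"odinger form via Lemma~\ref{conversion_lemma}, so that the unknown becomes $v=\sqrt\lambda>0$ with $\Delta v=uv$ and the data becomes $\partial_n G_w,\partial_n G_\xi$ modulo the unknown boundary values of $v$, should streamline the bookkeeping. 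The aim is then to propagate the full Cauchy data of $g_w$ and $g_\xi$ together with $\rho$ inward from $\partial D$, first bootstrapping recovery of $\lambda$ and its jet on $\partial D$ in the style of Kohn--Vogelius and then globally; in the plane one might also exploit that $L$--harmonic functions are quasiconformal images of harmonic ones, so that a pair of logarithmic sources heavily over--determines the change of variables.

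The main obstacle is precisely this last step. Two interior point sources sit right at the border between too little and just enough data, and there is no off--the--shelf inverse--problems technique for them: the complex--geometric--optics and $\dbar$ methods that solve Calder\'on in the plane require the whole operator $N_\lambda$, not two of its ``columns.'' I expect that closing the argument will demand either a new Carleman-- or Runge--approximation estimate adapted to two--source data, or a genuinely two--dimensional complex--analytic argument tracking how a pair of logarithmic singularities pins down the conformal structure. For the near term I would regard the linearized result as the realistic target and the full conjecture as open.
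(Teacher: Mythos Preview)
The statement you are attempting is labeled a \emph{Conjecture} in the paper, and the paper offers no proof of it whatsoever: it is posed immediately after the reduction of the full pumping--response problem to Calder\'on's problem, as a speculative partial--data variant, and is left entirely open. There is therefore no ``paper's own proof'' to compare your proposal against.

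To your credit, you recognize this. Your text is not a proof but a research plan, and you say so explicitly in the final sentence. The plan itself is sensible as a sketch of how one might attack the problem---linearize first, exploit the ratio $g_w/g_\xi$ and the Schr\"odinger conversion, then attempt boundary determination and interior propagation---and your observation that the na\"ive Green's--identity reduction only recovers two interior values of an $L$--harmonic function, far short of $N_\lambda$, is exactly the obstruction that makes the conjecture nontrivial. But none of the steps is carried out: the linearized injectivity is asserted as a target, not established, and the nonlinear step is explicitly flagged as requiring new estimates that do not yet exist. So as a proof the proposal has a genuine gap at every stage; as a statement of what would need to be done, it is reasonable, but it does not advance beyond what the paper already implicitly leaves open.
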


\section{Elliptic Growth as Balayage}

In the introduction we gave a brief discussion of balayage and its relationship to Laplacian growth. Specifically,
if $D(0)$ evolves into $D(t)$ during Laplacian growth after time $t$, then
the balayage of $\chi_{D(0)} + t\delta_w$ with respect to Lebesgue measure and $\chi_{D(t)}$ coincide up to null sets.
In this section we will generalize to balayage of measures based upon an underlying Laplace--Beltrami operator
and show that elliptic growth follows as Laplacian growth did before.
Since no boundary regularity is needed,
this extends the definition of elliptic growth to domains possessing cusps, corners, or even fractal boundaries.

Throughout this section, let $\lambda:\C\to\R$ be a smooth function satisfying $\lambda\geq \lambda_0 > 0$ everywhere for some constant
$\lambda_0$ and define $L=\nabla\lambda\nabla$. Suppose further that $\mu$ is a positive and finite Borel measure on $\C$;
ultimately we only wish to consider measures of the form $\mu = \chi_D + t\delta_w$. Finally, fix a fundamental solution
$E$ of $L$ and define the elliptic potential of $\mu$ as
\begin{equation*}
  \Lambda^\mu(z) = \int E(z-w)\, d\mu(w),
\end{equation*}
so that $L\Lambda^\mu = \mu$.
We define the (elliptic) balayage
of $\mu$ with respect to Lebesgue measure as follows. Find the smallest function $V$ satisfying both
\begin{equation}\label{elliptic_balayage_definition}
  \begin{cases}
    V \geq \Lambda^\mu \\
    L V \leq 1
  \end{cases}
\end{equation}
throughout $\C$; the balayage of $\mu$ is then $\Bal_\lambda(\mu,1) = LV$. Before proceeding, we should
ensure this definition is sound.
\begin{theorem}
  There exists a unique smallest function $V$ satisfying \eqref{elliptic_balayage_definition}.
\end{theorem}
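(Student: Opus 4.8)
The plan is to obtain $V$ as a Perron--type lower envelope, adapting the classical least--superharmonic--majorant construction to the uniformly elliptic operator $L$. Since $\lambda\geq\lambda_0>0$ is smooth, $L=\nabla\lambda\nabla$ is uniformly elliptic on every compact set, so its local solutions obey the usual potential theory (maximum principle, Harnack's inequality, solvability of the Dirichlet problem on balls, and a well--behaved notion of $L$--superharmonic function). Fixing locally a particular solution $w_0$ of $Lw_0=1$, a function $W$ satisfies \eqref{elliptic_balayage_definition} (with $W$ in place of $V$) exactly when $W\geq\Lambda^\mu$ and $W-w_0$ is $L$--superharmonic; write $\mathcal F$ for the class of lower semicontinuous functions with this property.

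First I would check $\mathcal F\neq\varnothing$. The potential $\Lambda^\mu$ is upper semicontinuous --- it is a decreasing limit of continuous elliptic potentials, dipping to $-\infty$ only at atoms of $\mu$ --- hence bounded above on every compact set, and the standard Green--function bounds for a planar divergence--form elliptic operator give $\Lambda^\mu(z)=O(\log|z|)$ as $|z|\to\infty$. Consequently a sufficiently steep global supersolution of $LW_0=1$ dominates $\Lambda^\mu$ everywhere: when $\lambda$ is also bounded above one may take $W_0=A|z|^2+C$ with $A,C$ large, and in general one builds $W_0$ by solving $LW_0=1$ on an exhausting family of balls with rapidly growing boundary data. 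Such a $W_0$ is a $C^2$ (hence l.s.c.) member of $\mathcal F$.

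Next, $\mathcal F$ is a lattice stable under regularized infima. The minimum of two functions above $\Lambda^\mu$ is above $\Lambda^\mu$, and the minimum of two functions whose difference with $w_0$ is $L$--superharmonic is again of that form, so $\mathcal F$ is closed under finite minima. Now put $u=\inf_{W\in\mathcal F}W$. Each $W\in\mathcal F$ satisfies the super--mean--value inequality $W(x)\geq\int W\,d\omega^x_r$ against $L$--harmonic measure on small balls (which do not charge the point $x$), so $u$ does too, and this forces $\liminf_{y\to x}u(y)\leq u(x)$; hence the lower regularization $\widehat u(x):=\liminf_{y\to x}u(y)$ satisfies $\widehat u\leq u$. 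By the fundamental convergence theorem of potential theory, transported to this harmonic space, $\widehat u$ is again $L$--superharmonic relative to $w_0$, with $\{\widehat u<u\}$ polar. Moreover $\widehat u\geq\Lambda^\mu$: on the open set where $\Lambda^\mu$ is continuous this follows from $u\geq\Lambda^\mu$ upon taking $\liminf$, and on the remaining polar set $\Lambda^\mu=-\infty$. Therefore $\widehat u\in\mathcal F$.

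Setting $V:=\widehat u$ then finishes the argument: $V\in\mathcal F$ means $V$ solves \eqref{elliptic_balayage_definition}, and for every $W\in\mathcal F$ one has $V=\widehat u\leq u\leq W$, so $V$ is the smallest such function; uniqueness is immediate, since two smallest functions must dominate one another. I expect the main obstacle to be the stability claim of the third step --- importing the fundamental convergence theorem, together with the polar--exceptional--set bookkeeping needed to keep the regularized infimum above the obstacle, from the Laplacian setting to $L$ --- which is where one invokes, rather than reproves, the potential theory of divergence--form elliptic operators; a lesser nuisance is constructing the global supersolution $W_0$ when $\lambda$ is unbounded above. As an alternative, one could instead solve the variational inequality for $L$ with obstacle $\Lambda^\mu$ on an exhausting sequence of balls --- a classical obstacle problem --- and pass to the limit, though identifying the limit as the least solution needs the same comparison arguments.
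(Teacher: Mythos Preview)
Your approach is correct and shares the paper's key reduction: subtract a particular solution of $Lw_0=1$ so that the problem becomes finding the least $L$--superharmonic majorant of the obstacle $\Lambda^\mu-w_0$. The paper stops there and cites the variational--inequality literature (Kinderlehrer--Stampacchia) for existence and uniqueness of the resulting elliptic obstacle problem, whereas you go on to sketch a Perron--type construction---defining the class $\mathcal F$, taking the pointwise infimum, and invoking the fundamental convergence theorem to regularize. Your alternative at the end (solve the obstacle problem on exhausting balls and pass to the limit) is essentially what the paper's citation covers.

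What your route buys is a more self--contained argument that makes the potential--theoretic content visible; what the paper's route buys is brevity and avoidance of the technical points you correctly flag---construction of a global supersolution $W_0$ when $\lambda$ is unbounded, and transplanting the fundamental convergence theorem (with its polar exceptional set) from the Laplacian to $L$. Neither issue is fatal, but if you pursue the Perron line you should either invoke a harmonic--space framework (e.g.\ Brelot or Bauer axioms, which $L$ satisfies) to import these results wholesale, or be prepared to prove them. One small point: your claim that $W_0=A|z|^2+C$ works when $\lambda$ is bounded above is not quite right, since $L(A|z|^2)=4A\lambda+2A\,z\cdot\nabla\lambda$ need not be $\leq 1$; you want $A$ small for the differential inequality but large for the growth, so a direct quadratic does not suffice---the exhaustion construction you mention is the safer route.
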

\begin{proof}
  Find a smooth function $u$ so that $Lu=1$. Then we wish to find the smallest $V$ satisfying both
  \begin{equation*}
    \begin{cases}
      V-u \geq \Lambda^\mu-u \\
      L (V-u) \leq 0
    \end{cases}
  \end{equation*}
  throughout $\C$. This is the elliptic version of a standard obstacle problem: find a smallest superharmonic function which is bounded
  below by a known function. The existence of a unique solution is well--known (e.g., \cite{Obstacle}).
\end{proof}
\noindent
Next we want to verify that the balayage of $\chi_D+t\delta_w$ is the characteristic function of a domain in this generalized setting.
We will need the following two lemmas.
\begin{lemma}
  Given positive and finite Borel measures $\mu_1,\mu_2$, we have
  \begin{equation*}
    \Bal_\lambda(\mu_1+\mu_2,1) = \Bal_\lambda(\Bal_\lambda(\mu_1,1)+\mu_2,1).
  \end{equation*}
\end{lemma}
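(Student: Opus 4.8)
The plan is to exploit the variational characterization of balayage directly, since the defining obstacle problem composes well under this kind of iteration. Write $\nu_1 = \Bal_\lambda(\mu_1,1)$, and let $V_1$ be the smallest function with $V_1 \geq \Lambda^{\mu_1}$ and $LV_1 \leq 1$; by definition $\nu_1 = LV_1$. Similarly let $W$ be the smallest function with $W \geq \Lambda^{\mu_1+\mu_2}$ and $LW \leq 1$, so the left-hand side is $LW$, and let $W'$ be the smallest function with $W' \geq \Lambda^{\nu_1 + \mu_2}$ and $LW' \leq 1$, so the right-hand side is $LW'$. The goal is to show $W = W'$, which gives the claim after applying $L$.

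First I would record the key fact that on the (open) \emph{non-coincidence set} $\{V_1 > \Lambda^{\mu_1}\}$ one has $LV_1 = 1$ (standard for the obstacle problem), while on the coincidence set $LV_1 = \mu_1$ in the sense of measures; in particular $\nu_1 = LV_1$ agrees with $\mu_1$ where $\mu_1$ already satisfies the constraint and equals Lebesgue measure elsewhere, and crucially $\nu_1 \leq \mu_1$ only in the swept region while $\Lambda^{\nu_1} = L^{-1}\nu_1$ satisfies $\Lambda^{\nu_1} \leq \Lambda^{\mu_1}$ with equality outside the non-coincidence set — more usefully, $V_1 \geq \Lambda^{\nu_1}$ always (since $LV_1 = \nu_1$ on the non-coincidence set and the potentials are normalized to agree at infinity). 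I would then add $\Lambda^{\mu_2}$ throughout: the function $V_1 + \Lambda^{\mu_2}$ satisfies $L(V_1 + \Lambda^{\mu_2}) = \nu_1 + \mu_2 \geq 1 + \mu_2$... which is the wrong direction, so instead the right move is to compare $W$ with $V_1 + (W - V_1)$ and argue in two inclusions.

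The cleanest route, and the one I would carry out, is a double-inequality argument on the functions themselves. \textbf{Step 1:} show $W \geq W'$. Since $\mu_1 + \mu_2 \geq \nu_1 + \mu_2$ is false in general, this is not immediate from monotonicity of balayage in the measure; instead, observe $W \geq \Lambda^{\mu_1+\mu_2} = \Lambda^{\mu_1} + \Lambda^{\mu_2}$ and $W \geq V_1 + \Lambda^{\mu_2}$ is what we want — establish this by noting $W - \Lambda^{\mu_2} \geq \Lambda^{\mu_1}$ and $L(W - \Lambda^{\mu_2}) \leq 1 - \mu_2 \leq 1$, so by minimality of $V_1$, $W - \Lambda^{\mu_2} \geq V_1$, i.e. $W \geq V_1 + \Lambda^{\mu_2} \geq \Lambda^{\nu_1} + \Lambda^{\mu_2} = \Lambda^{\nu_1+\mu_2}$; combined with $LW \leq 1$ and minimality of $W'$ this gives $W \geq W'$. \textbf{Step 2:} show $W' \geq W$, equivalently $LW' = \Bal$ majorizes correctly — here I would use that $W' \geq \Lambda^{\nu_1+\mu_2}$ and on the non-coincidence set for $V_1$ we have $\nu_1 = LV_1 = 1$, so $\Lambda^{\nu_1}$ differs from $V_1$ only via the coincidence set where $\Lambda^{\nu_1} = \Lambda^{\mu_1}$; a short argument with the maximum principle (superharmonicity of $W' - \Lambda^{\nu_1} + \Lambda^{\mu_1}$-type comparisons, using $L W' \le 1$) upgrades $W' \geq \Lambda^{\mu_1} + \Lambda^{\mu_2} = \Lambda^{\mu_1+\mu_2}$, and then minimality of $W$ finishes.

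\textbf{Main obstacle.} The delicate point is Step 2: passing from $W' \geq \Lambda^{\nu_1 + \mu_2}$ back up to $W' \geq \Lambda^{\mu_1 + \mu_2}$ requires knowing that the "mass deficit" $\mu_1 - \nu_1$ (supported in the coincidence set of $V_1$, where $W'$ might a priori dip below) does not actually force $W'$ down, and the clean way to see this is that on that coincidence set $V_1 = \Lambda^{\mu_1}$ is itself a valid competitor structure — one shows $W' \geq V_1 + \Lambda^{\mu_2}$ by the same minimality trick as in Step 1 but now applied with the obstacle $\Lambda^{\nu_1+\mu_2}$, using $V_1 + \Lambda^{\mu_2} \geq \Lambda^{\nu_1} + \Lambda^{\mu_2}$ and checking $L(W' - \Lambda^{\mu_2}) \le 1 - \mu_2$, $W' - \Lambda^{\mu_2} \ge \Lambda^{\nu_1}$, hence $W'-\Lambda^{\mu_2}$ is a competitor for the $\nu_1$-obstacle problem, so it dominates $V_1$. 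This is exactly the standard "balayage is idempotent / iteration-stable" lemma, and I expect the proof in the paper to be a terse version of this competitor-swapping argument rather than anything requiring new ideas.
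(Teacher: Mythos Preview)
Your approach is the same as the paper's: a two-sided comparison of the obstacle-problem minimizers via competitor arguments. Your Step~1 is exactly the paper's argument for $V^{\mu_1+\mu_2}\ge V^{F(\mu_1)+\mu_2}$.

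Step~2 is where you drift. The detour through non-coincidence sets and the ``$\nu_1$-obstacle problem'' is unnecessary and, as written, does not close: the minimizer of the obstacle problem with obstacle $\Lambda^{\nu_1}$ is $\Lambda^{\nu_1}$ itself (since $L\Lambda^{\nu_1}=\nu_1\le 1$ already), not a priori $V_1$, so ``it dominates $V_1$'' does not follow from competitorship alone. What actually makes Step~2 work---and what the paper uses without comment---is the identity $V_1=\Lambda^{\nu_1}$ (both have $L$-image $\nu_1$ and agree at infinity under the implicit normalization of the fundamental solution). Granting that, the whole step is a single chain:
\[
  W'\ \ge\ \Lambda^{\nu_1+\mu_2}\ =\ \Lambda^{\nu_1}+\Lambda^{\mu_2}\ =\ V_1+\Lambda^{\mu_2}\ \ge\ \Lambda^{\mu_1}+\Lambda^{\mu_2}\ =\ \Lambda^{\mu_1+\mu_2},
\]
and minimality of $W$ finishes. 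You already flagged the normalization issue in passing; use it directly and drop the coincidence-set machinery.
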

\begin{proof}
  Given a measure $\mu$ let $V^\mu$ denote the smallest function satisfying \eqref{elliptic_balayage_definition} and let
  $F(\mu) = \Bal_\lambda(\mu,1) = LV^\mu$. With this notation we wish to show $V^{F(\mu_1)+\mu_2} = V^{\mu_1+\mu_2}$. Note that
  \begin{equation*}
    V^{F(\mu_1)+\mu_2} \geq \Lambda^{F(\mu_1)+\mu_2} = \Lambda^{F(\mu_1)} + \Lambda^{\mu_2} = V^{\mu_1} + \Lambda^{\mu_2}
    \geq \Lambda^{\mu_1} + \Lambda^{\mu_2} = \Lambda^{\mu_1+\mu_2}.
  \end{equation*}
  Since $LV^{F(\mu_1)+\mu_2} \leq 1$, considering the minimization problem of $\Bal_\lambda(\mu_1+\mu_2,1)$ gives
  $V^{F(\mu_1)+\mu_2} \geq V^{\mu_1+\mu_2}$. Next, note that
  \begin{equation*}
    V^{\mu_1+\mu_2} - \Lambda^{\mu_2} \geq \Lambda^{\mu_1+\mu_2} - \Lambda^{\mu_2} = \Lambda^{\mu_1}
  \end{equation*}
  and that the positivity of $\mu_2$ gives
  \begin{equation*}
    L(V^{\mu_1+\mu_2} - \Lambda^{\mu_2}) \leq 1-\mu_2 \leq 1.
  \end{equation*}
  Considering the minimization problem for $\Bal_\lambda(\mu_1,1)$ gives
  \begin{equation*}
    V^{\mu_1+\mu_2} - \Lambda^{\mu_2} \geq V^{\mu_1} = \Lambda^{F(\mu_1)},
  \end{equation*}
  whence $V^{\mu_1+\mu_2} \geq \Lambda^{F(\mu_1)+\mu_2}$. As $LV^{\mu_1+\mu_2} \leq 1$, the considering the minimization problem
  for $\Bal_\lambda(F(\mu_1)+\mu_2,1)$ gives $V^{\mu_1+\mu_2} \geq V^{F(\mu_1)+\mu_2}$, as desired.
\end{proof}
\begin{lemma}
  Let $\mu$ be a measure that has either the form $t\delta_w$ for some $t > 0$ and $w\in\C$ or
  the form $\mu = f\, dm$ for some $f\in L^\infty(dm)$. Then there exists a domain $D$ so that
  \begin{equation*}
    \Bal_\lambda(\mu,1) = \chi_{D}.
  \end{equation*}
\end{lemma}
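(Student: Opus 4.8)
The plan is to identify $\Bal_\lambda(\mu,1)$ with the \emph{saturated region} of the obstacle problem that defines it. Write $U=\Lambda^\mu$, let $V$ be the obstacle solution furnished by the previous theorem (the smallest function with $V\ge U$ and $LV\le 1$), and set $\nu=\Bal_\lambda(\mu,1)=LV$ and $\Omega=\{V>U\}$. First I would collect the standard structural facts for this elliptic obstacle problem (available in \cite{Obstacle}, and adaptable from the Laplacian case in \cite{Balayage,Balayage2}): $\nu$ is a positive measure with $0\le\nu\le m$; for the $\mu$ under consideration $\nu$ is compactly supported with total mass $\nu(\C)=\mu(\C)$; the set $\Omega$ is open (since $V$ is continuous by obstacle regularity while $-U$ is lower semicontinuous); on $\Omega$ the equation saturates, $LV\equiv 1$, i.e. $\nu|_\Omega=m|_\Omega$; on the interior of the coincidence set $\C\setminus\overline{\Omega}$ one has $V\equiv U$, so $\nu=\mu$ there; and the free boundary $\partial\Omega$ carries no mass of $\nu$ (this uses $V\in C^{1,1}_{\mathrm{loc}}$ away from the singular support of $\mu$).

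Next I would treat the main case $\mu=t\delta_w$. The point is simply that $\Lambda^{t\delta_w}$ is not itself a subsolution, since $L\Lambda^{t\delta_w}=t\delta_w\not\le m$; concretely $U=tE(\cdot-w)$ tends to $-\infty$ at $w$ while $V$ stays finite there (as $\nu\le m$ near $w$ forces $V=\Lambda^\nu+h$ with $\Lambda^\nu$ continuous and $h$ locally $L$-harmonic), so $V>U$ on a punctured neighbourhood of $w$ and $w\in\Omega$. Hence $\mu|_{\C\setminus\overline\Omega}=0$, and the structural facts give $\nu=m|_\Omega$, i.e. $\nu=\chi_\Omega$ up to a null set. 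It remains to see $\Omega$ may be taken connected: $\Omega$ is bounded, so if it had a component $\Omega'$ with $w\notin\Omega'$, then $W:=V-U$ satisfies $LW=1>0$ in $\Omega'$ (no mass of $\mu$ inside), $W>0$ in $\Omega'$, and $W=0$ on $\partial\Omega'$; being $L$-subharmonic, $W$ attains its maximum over $\overline{\Omega'}$ on $\partial\Omega'$, forcing $W\le 0$ in $\Omega'$, a contradiction. So $\Omega$ is connected and we put $D=\Omega$. (This last step is the elliptic analogue of the Sakai-type minimum-principle argument already used when $\lambda\equiv 1$, cf. \cite{Gustafsson,Balayage}.)

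For the case $\mu=f\,dm$, the instance actually needed for elliptic growth is $f=\chi_{D_0}$ with $D_0$ a domain, and there the argument is immediate: $\mu$ is a subsolution ($L\Lambda^\mu=\chi_{D_0}\le m$), so $V=\Lambda^\mu$, $\Omega=\varnothing$, and $\nu=\chi_{D_0}$ at once. For a general admissible $f$ one runs the same scheme — density $1$ on $\Omega$, density $f$ on the coincidence set, and $\nu\le m$ — and combines it with connectedness of $\mathrm{supp}\,\mu$ via the minimum-principle argument above. I expect that last passage to be the only real obstacle: upgrading ``$\nu$ is the characteristic function of a measurable set'' to ``$\nu=\chi_D$ for a genuine domain'' is what requires boundedness of $\Omega$, the $L$-maximum principle, and enough regularity of the free boundary; the saturation and density-matching inputs are routine obstacle-problem facts that I would cite rather than reprove.
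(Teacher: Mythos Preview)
The paper does not actually prove this lemma: its entire argument is the sentence ``A proof of this result has appeared in a few different forms for partial balayage based upon the Laplacian; see \cite{Balayage,Balayage2,Sakai} for details. The argument for elliptic balayage is analogous.'' Your proposal is therefore not merely compatible with the paper's approach---it is an explicit rendering of the very argument the paper chose to outsource. The identification of $\Bal_\lambda(\mu,1)$ with the saturated region of the obstacle problem, the structural facts you list (saturation $LV=1$ on $\Omega$, $V=U$ on the coincidence set, no mass on $\partial\Omega$), and the maximum-principle argument for connectedness are exactly the ingredients in the Gustafsson--Sakai treatment, transported from $\Delta$ to $L=\nabla\lambda\nabla$.

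One remark worth making, since you went further than the paper did: the lemma as stated is actually too strong for a \emph{general} $f\in L^\infty(dm)$. If $f\le 1$ a.e.\ then $\Lambda^\mu$ already satisfies $L\Lambda^\mu\le 1$, so $V=\Lambda^\mu$ and $\Bal_\lambda(\mu,1)=f\,dm$, which is a characteristic function only when $f$ is $\{0,1\}$-valued. The paper only ever invokes the lemma with $f=\chi_{D'}+\chi_{D_0}$, and the cited proofs implicitly require something like $\{f>1\}$ nonempty; you effectively acknowledge this when you single out the general-$f$ passage as ``the only real obstacle.'' That residual difficulty lies in the lemma's hypotheses rather than in your sketch, which is sound for the $t\delta_w$ case and for the densities actually used downstream.
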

\begin{proof}
  A proof of this result has appeared in a few different forms for partial balayage based upon the Laplacian; see
  \cite{Balayage,Balayage2,Sakai} for details. The argument for elliptic balayage is analogous.
\end{proof}
\begin{theorem}
  Given a bounded domain $D(0)\subset \C$, a point $w\in D$, and $t\geq 0$, there exists a domain $D(t)\supseteq D(0)$ so that
  \begin{equation*}
    \Bal_\lambda(\chi_{D(0)} + t\delta_w,1) = \chi_{D(t)}.
  \end{equation*}
\end{theorem}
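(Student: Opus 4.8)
The plan is to deduce the statement from the two lemmas immediately preceding it, together with the obstacle--problem description of elliptic balayage behind \eqref{elliptic_balayage_definition}. Write $\mu = \chi_{D(0)} + t\delta_w$ and apply the iterated--balayage lemma with $\mu_1 = t\delta_w$ and $\mu_2 = \chi_{D(0)}$ --- both positive finite Borel measures, the latter finite because $D(0)$ is bounded --- to obtain
\begin{equation*}
  \Bal_\lambda(\chi_{D(0)} + t\delta_w,\, 1) = \Bal_\lambda\big(\Bal_\lambda(t\delta_w, 1) + \chi_{D(0)},\, 1\big).
\end{equation*}
By the structure lemma, $\Bal_\lambda(t\delta_w, 1) = \chi_{D_w}$ for some domain $D_w$ (with $w\in D_w$ when $t>0$, since $t\,E(\cdot-w)\to -\infty$ at $w$ while the solution of the obstacle problem is finite there, placing $w$ in the non--coincidence set). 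Hence the right--hand side equals $\Bal_\lambda(\chi_{D_w} + \chi_{D(0)},\, 1)$, and $\chi_{D_w} + \chi_{D(0)}$ is of the form $f\,dm$ with $f\in L^\infty(dm)$ and $\|f\|_\infty \leq 2$. A second application of the structure lemma yields $\Bal_\lambda(\chi_{D_w} + \chi_{D(0)},\, 1) = \chi_{D(t)}$ for some domain $D(t)$, which is the claimed identity.

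It remains to prove $D(0) \subseteq D(t)$ --- necessarily only up to a null set, since $D(t)$ is pinned down by the identity only modulo null sets and one fixes a normalization. Let $V$ be the smallest function with $V \geq \Lambda^{\chi_{D_w} + \chi_{D(0)}}$ and $LV \leq 1$, so that $LV = \chi_{D(t)}$, while $LV = \chi_{D_w} + \chi_{D(0)}$ almost everywhere on the coincidence set $\{V = \Lambda^{\chi_{D_w} + \chi_{D(0)}}\}$ and $LV = 1$ off it. On $D(0)\cap D_w$ the density $\chi_{D_w}+\chi_{D(0)}$ equals $2 > 1$, so this set is (a.e.) disjoint from the coincidence set and there $LV = 1$; on $D(0)\setminus D_w$ the density equals $1$, so $LV = 1$ whether or not a point is a coincidence point. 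Hence $\chi_{D(t)} = LV = 1$ almost everywhere on $D(0)$, i.e.\ $D(0) \subseteq D(t)$ up to a null set.

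The algebraic reduction via iterated balayage is routine; the point requiring care is the second paragraph --- identifying $LV$ with the $L$--image of the obstacle on the coincidence set and with $1$ off it, plus the ``up to null sets'' bookkeeping and the fact, imported from the structure lemma, that $D(t)$ may be taken open and connected. These are standard in obstacle--problem theory, so I would cite the references attached to that lemma rather than reprove them. As a consistency check the argument also gives $\mathrm{area}(D(t)) = \mathrm{area}(D(0)) + t$: since $V - \Lambda^{\chi_{D_w}+\chi_{D(0)}}$ is compactly supported one has $\int_{\C} L\big(V - \Lambda^{\chi_{D_w}+\chi_{D(0)}}\big)\,dm = 0$, whence $\mathrm{area}(D(t)) = \mathrm{area}(D_w) + \mathrm{area}(D(0))$, and $\mathrm{area}(D_w) = t$ because elliptic balayage preserves total mass; this agrees with Corollary~\ref{LB_area_growth}.
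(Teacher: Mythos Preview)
Your proof is correct and follows exactly the paper's route: apply the iterated--balayage lemma with $\mu_1=t\delta_w$, $\mu_2=\chi_{D(0)}$, then invoke the structure lemma twice. In fact you do more than the paper, which stops after the chain of equalities and never verifies the inclusion $D(0)\subseteq D(t)$; your coincidence--set argument fills that gap, and the mass--preservation consistency check is a nice addition.
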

\begin{proof}
  Using the lemmas above, there exist domains $D'(t)$ and $D(t)$ so that
  \begin{align*}
    \Bal_\lambda(\chi_{D(0)} + t\delta_w,1) &= \Bal_\lambda( \Bal_\lambda(t\delta_w,1)+\chi_{D(0)},1) \\
    &= \Bal_\lambda(\chi_{D'(t)}+\chi_{D(0)},1)\\
    &= \chi_{D(t)}. \qedhere
  \end{align*}
\end{proof}
\noindent
Finally, we can verify that elliptic growth coincides with elliptic balayage.
\begin{theorem}
  If a family of domains $\{D(t):0\leq t<t_0\}$ satisfies \eqref{EG_definition} with $Q>0$, then
  \begin{equation}\label{weak_EG}
    \Bal_\lambda(\chi_{D(0)}+Qt\delta_w, 1) = \chi_{D(t)}.
  \end{equation}
\end{theorem}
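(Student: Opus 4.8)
The plan is to mimic the classical Laplacian argument: translate the elliptic growth PDE \eqref{EG_definition} into a statement about the obstacle-problem function $V$ defining $\Bal_\lambda$, and then invoke the uniqueness in \eqref{elliptic_balayage_definition}. First I would reduce to the case $Q=1$ by the usual time rescaling (replacing $t$ by $Qt$ absorbs $Q$ into the driving measure), so it suffices to treat $Q=1$. Fix $t$ and set $\mu = \chi_{D(0)} + t\delta_w$; by the previous theorem we already know $\Bal_\lambda(\mu,1) = \chi_{D(t')}$ for \emph{some} domain $D(t')$, so the real content is to identify that domain with the one produced by the strong dynamics. Let $V$ be the smallest function with $V\geq\Lambda^\mu$ and $LV\leq 1$; I want to show $LV = \chi_{D(t)}$ for the $D(t)$ coming from \eqref{EG_definition}.

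The key step is to write down the candidate for $V$ explicitly. Following the Laplacian case, define
\begin{equation*}
  V = \Lambda^{\chi_{D(t)}} + (\text{something supported on }\C\setminus D(t)),
\end{equation*}
or more usefully characterize $V$ by: $V = \Lambda^\mu$ on $\C\setminus D(t)$, $LV = 1$ on $D(t)$, and $V$ is $C^1$ across $\partial D(t)$. Concretely, set $V = \Lambda^\mu + u$ where $u$ solves the Cauchy-type problem $Lu = \chi_{D(t)} - \mu$ with $u$ and its gradient vanishing outside $D(t)$; equivalently, inside $D(t)$ one has $L V = 1$, and the difference $\Lambda^{\chi_{D(t)}} - \Lambda^\mu$ restricted to $D(t)$ should be, up to a harmonic-for-$L$ correction, expressible via the Green function $g_w$ of $D(t)$. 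Precisely, I expect the identity $\Lambda^{\chi_{D(t)}}(z) - \Lambda^\mu(z)$ equals $-t\, g(z,w) + (\text{an }L\text{-harmonic function})$ on $D(t)$, with the $L$-harmonic piece chosen so that $V$ matches $\Lambda^\mu$ and has matching normal derivative on $\partial D(t)$. The condition that the normal derivative match is exactly $\lambda\partial_n g_w = v_n$, which is the third line of \eqref{EG_definition}: integrating the growth relation in time, the swept region $D(t)\setminus D(0)$ accounts for precisely the mass $t\delta_w$ moved out to density $1$, because $\frac{d}{dt}\mathrm{area}(D(t)) = Q = 1$ (Corollary \ref{LB_area_growth}) and, more refined, the harmonic-moment computation in the analogue of Richardson's theorem shows $\int_{D(t)\setminus D(0)} \phi\,dA = t\,\phi(w)$ for every $L$-harmonic $\phi$.

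So the steps, in order, are: (1) reduce to $Q=1$; (2) show the time-integrated form of \eqref{EG_definition} gives, for every $\phi$ with $L\phi=0$, the moment relation $\int_{D(t)}\phi\,dA = \int_{D(0)}\phi\,dA + t\,\phi(w)$, i.e. $\chi_{D(t)}$ and $\mu$ have the same $L$-harmonic moments; (3) construct $V$ as above and verify $V\geq\Lambda^\mu$ with equality outside $D(t)$ and $LV\leq 1$ with equality inside --- the inequality $V\geq\Lambda^\mu$ on $D(t)$ following because $V - \Lambda^\mu = -t g_w + (\text{correction})$ and $g_w\leq 0$, after checking the correction term has the right sign using the maximum principle for $L$; (4) verify minimality of $V$ among all competitors, again by a maximum-principle comparison: any other competitor $W$ satisfies $L(W-V)\leq 1 - 1 = 0$ on the contact set and $W - V\geq 0$ where $V$ touches the obstacle, so $W\geq V$ everywhere; (5) conclude $\Bal_\lambda(\mu,1) = LV = \chi_{D(t)}$ by uniqueness from the soundness theorem.

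The main obstacle will be step (3)–(4): unlike the Laplacian case, the $L$-harmonic correction function needed to glue $\Lambda^{\chi_{D(t)}}$ to $\Lambda^\mu$ across $\partial D(t)$ is not simply a constant, and controlling its sign (to get $V\geq\Lambda^\mu$) and its behavior at infinity (to get a legitimate competitor) requires care --- in particular one must check that $V-\Lambda^\mu$ extends by $0$ to all of $\C\setminus D(t)$ in a $C^1$ fashion, which is exactly where the boundary condition $v_n = \lambda\partial_n g$ and the $C^2$ regularity of $\partial D(t)$ enter. An alternative, cleaner route that sidesteps the explicit correction is to prove \eqref{weak_EG} by differentiating in $t$: show that both sides, as increasing families of sets, have boundaries moving with the same normal velocity, using the known differentiability of partial balayage in $t$ (the swept measure $\frac{d}{dt}\Bal_\lambda(\mu_0 + t\delta_w,1)$ is the harmonic/$L$-harmonic measure of $w$ in $D(t)$, whose density against arclength is $\lambda\partial_n g_w$); matching initial conditions at $t=0$ then gives equality for all $t<t_0$. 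I would pursue this second route if the explicit-$V$ computation becomes unwieldy.
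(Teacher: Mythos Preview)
Your outline (build a candidate for the obstacle function $V$, verify the two constraints in \eqref{elliptic_balayage_definition}, prove minimality by the maximum principle) is the same skeleton the paper uses, and it would succeed.  However, you are making the construction of $V$ much harder than necessary and as a result you are anticipating difficulties that do not actually arise.

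The candidate is simply $V=\Lambda^{\chi_{D(t)}}$; there is no ``something supported on $\C\setminus D(t)$'' and no $L$-harmonic correction term to control.  The reason you do not see this is that in your step~(2) you only test against $L$-harmonic $\phi$.  The paper instead tests the time-integrated Richardson identity
\[
\int_{D(t)}\phi\,dA-\int_{D(0)}\phi\,dA \;=\; Qt\,\phi(w)+\int_0^t\!\!\int_{D(s)} g_w\,L\phi\,dA\,ds
\]
against $\phi(\cdot)=E(\cdot-a)$, the fundamental solution, for which $L\phi=\delta_a\ge 0$.  For $a\notin D(t)$ the double integral vanishes, giving $\Lambda^{\chi_{D(t)}}(a)=\Lambda^{\mu}(a)$ exactly; for arbitrary $a$ it is nonpositive (since $g_w\le 0$), giving $\Lambda^{\chi_{D(t)}}\ge\Lambda^{\mu}$ everywhere.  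This single choice of test function replaces your entire step~(3) and eliminates the sign worry about the correction.  Once $V=\Lambda^{\chi_{D(t)}}$ is established as a competitor, minimality is exactly the maximum-principle argument you describe in step~(4): for any other competitor $W$, the difference $\Lambda^{\chi_{D(t)}}-W$ is $\le 0$ on $D(t)^c$ and $L$-subharmonic on $D(t)$, hence $\le 0$ everywhere.

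Your alternative route (match normal velocities by differentiating the balayage in $t$) is plausible but heavier: it requires regularity of $t\mapsto\Bal_\lambda(\chi_{D(0)}+t\delta_w,1)$ that you would have to establish separately, whereas the direct potential computation above needs nothing beyond the strong-solution hypothesis already in \eqref{EG_definition}.
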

\begin{proof}
  As in the proof of Richardson's theorem, for a smooth function $\phi$ we have
  \begin{equation*}
    \frac{d}{dt}\int_{D(t)} \phi\, dA = Q\phi(w) + \int_{D(t)} g_w L\phi\, dA.
  \end{equation*}
  Integrating over time and choosing $\phi(z) = E(z-a)$ with $a \in D(t)^c$ yields
  \begin{equation*}
    \Lambda^{D(t)} - \Lambda^{D(0)} = QtE(w-a) = \Lambda^{Q\delta_w},
  \end{equation*}
  so that outside of $D(t)$ we have $\Lambda^{D(t)} = \Lambda^{D(0)+Qt\delta_w}$.
  More generally, choosing $\phi(z) = E(z-a)$ with any pole $a$ gives a `subharmonic' function---that is, $L\phi\geq 0$. Then
  $g_wL\phi \leq 0$ and we have
  \begin{equation*}
    \Lambda^{D(t)} - \Lambda^{D(0)} \geq QtE(w-a) = \Lambda^{Q\delta_w},
  \end{equation*}
  so that $\Lambda^{D(t)} \geq \Lambda^{D(0) + Qt\delta_w}$ throughout $\C$.
  
  Since $L\Lambda^{D(t)} = \chi_{D(t)} \leq 1$, the potential $\Lambda^{D(t)}$ satisfies \eqref{elliptic_balayage_definition}
  with $\mu=\chi_{D(0)} + t\delta_w$.
  It remains to show that $\Lambda^{D(t)}$ is the smallest of all functions satisfying \eqref{elliptic_balayage_definition};
  let $V$ be one such function. Note that $\Lambda^{D(t)} - V\leq \Lambda^{D(t)} - \Lambda^{D(0)+t\delta_w} = 0$ on $D(t)^c$
  and $L(\Lambda^{D(t)}-V) = 1-LV \geq 0$ inside $D(t)$. The maximum principle for subharmonic functions implies that $\Lambda^{D(t)}\leq V$
  inside $D(t)$ as well. Thus $\Lambda^{D(t)} \leq V$ everywhere, as desired.
\end{proof}
With these previous two theorems in hand, we can extend the definition of elliptic growth to a weak formulation;
we simply say that $\{D(t):0\leq t<t_0\}$ describes a weak elliptic growth process if equation \eqref{weak_EG} is satisfied.
Note that equation \eqref{weak_EG} uniquely identifies $D(t)$ up to null sets.
\begin{corollary}
  Weak elliptic growth of a domain exists and is unique up to null sets for all times $t>0$. If a strong solution exists,
  it is also unique up to null sets.
\end{corollary}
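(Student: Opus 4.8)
The plan is to read everything off from the three results immediately preceding this corollary, so the argument will be short. For \emph{existence}, I would rescale time in the preceding existence theorem for elliptic balayage: given a bounded $D(0)$ and $w\in D(0)$, for each $t>0$ it furnishes a domain $D(t)\supseteq D(0)$ with $\Bal_\lambda(\chi_{D(0)}+Qt\delta_w,1)=\chi_{D(t)}$, which is exactly \eqref{weak_EG}. Since the balayage of any finite measure exists, this holds for \emph{all} $t>0$: unlike the strong formulation, the weak one never breaks down in finite time, so weak elliptic growth does exist for all times $t>0$. To see that the family $\{D(t)\}$ is genuinely nested I would invoke the iterated-balayage lemma: for $0\le s\le t$,
\begin{align*}
  \chi_{D(t)}
  = \Bal_\lambda(\chi_{D(0)}+Qt\delta_w,1)
  &= \Bal_\lambda\big(\Bal_\lambda(\chi_{D(0)}+Qs\delta_w,1)+Q(t-s)\delta_w,1\big)\\
  &= \Bal_\lambda(\chi_{D(s)}+Q(t-s)\delta_w,1),
\end{align*}
and then the existence theorem applied to $D(s)$ in place of $D(0)$ identifies the right-hand side with $\chi_{D'}$ for some $D'\supseteq D(s)$; the uniqueness step below forces $D'=D(t)$ up to a null set.

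For \emph{uniqueness}, I would recall the soundness theorem of this section: the obstacle problem \eqref{elliptic_balayage_definition} has a unique smallest solution $V$, so $\Bal_\lambda(\mu,1)=LV$ is a single well-defined measure depending on $\mu$ alone. Taking $\mu=\chi_{D(0)}+Qt\delta_w$, any two domains satisfying \eqref{weak_EG} have the same characteristic function as an element of $L^\infty$, i.e.\ they differ by a Lebesgue-null set --- this is precisely the asserted uniqueness up to null sets. (If one wants an honest open set rather than an $L^\infty$-equivalence class, one fixes a canonical representative once and for all, say the set of Lebesgue density points of $\{LV=1\}$.) Finally, if $\{D(t):0\le t<t_0\}$ is a \emph{strong} solution of \eqref{EG_definition} with $Q>0$, the theorem relating elliptic growth to balayage shows it satisfies \eqref{weak_EG}; hence it coincides, up to null sets, with the weak solution just constructed, and that one is unique --- so the strong solution is unique up to null sets as well.

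The only point that calls for a word of care --- and the nearest thing to an obstacle here --- is the bookkeeping behind the phrase ``a domain, unique up to null sets'': balayage pins down the \emph{measure} $\chi_{D(t)}$, not a particular open set, so literally the statement is about $L^\infty$-equivalence classes unless a canonical representative is chosen in advance. All of the analytic content --- solvability and uniqueness of the obstacle problem, and the fact that $\Bal_\lambda(\chi_{D(0)}+t\delta_w,1)$ is the characteristic function of a domain --- has already been established in the lemmas cited above, so nothing further is needed.
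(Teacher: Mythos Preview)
Your proposal is correct and follows essentially the same route the paper intends: the corollary is stated without proof, relying on the preceding remark that \eqref{weak_EG} uniquely identifies $D(t)$ up to null sets, the existence theorem that $\Bal_\lambda(\chi_{D(0)}+t\delta_w,1)$ is the characteristic function of a domain, and the theorem that strong solutions satisfy \eqref{weak_EG}. Your write-up is simply a more explicit version of what the paper leaves implicit, and your additional nesting argument via the iterated-balayage lemma is a nice bonus the paper does not spell out.
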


\section{An Alternative to Elliptic Growth}
A problem with the reverse--time Hele--Shaw flow---that is, suction---is the formation of cusps in finite time;
in the neighborhood of a cusp our simplifying assumptions of neglecting dependence on some spatial variables are invalid.
Thus we return to the equation $\mu\Delta v=\nabla p$, inserting our regularized $z$ dependence:
\begin{equation*}
  \mu\left(\partial_x^2 + \partial_y^2 - \frac{12}{h^2}\right)v = \nabla p.
\end{equation*}
We rewrite this as
\begin{equation} \label{Stokes_flow}
  \left(\frac{h^2}{12}\Delta - 1\right)v = \frac{h^2}{12\mu}\nabla p,
\end{equation}
where $\Delta$ now represents the two--dimensional Laplacian. Once again we view this equation in $\C$,
having removed the $z$ dependence. Nonetheless, the use of the Helmholtz operator rather than just the Laplacian
in equation \eqref{Stokes_flow} is meant to retain some of the character of three--dimensional flow. For this reason, we
call a flow governed by equation \eqref{Stokes_flow} Quasi--2D Stokes Flow (QSF, for short).

If we make the approximation $h^2/12 \ll 1$, we obtain the equation governing Hele--Shaw flow; in this way we see that
QSF is a singular perturbation of Laplacian growth. On the other hand, approximating $h^2/12 \gg 1$ we obtain traditional
Stokes flow. In this sense QSF forms an intermediate type of flow which could be used explain singularities occurring in
Laplacian growth.

We can reformulate the dynamics purely in terms of $p$, much as we have done with Laplacian and elliptic growth.
Let $a=12/h^2$, assume that $\mu=1$, and recall that the Laplacian of a vector field is defined to be
\begin{equation*}
  \Delta \mathbf{v} = \nabla(\nabla\cdot\mathbf{v}) - \nabla\times(\nabla\times\mathbf{v}).
\end{equation*}
Thus equation \eqref{Stokes_flow} becomes
\begin{equation*}
  \nabla(\nabla\cdot\mathbf{v}) - \nabla\times(\nabla\times\mathbf{v}) - a\mathbf{v} = \nabla p.
\end{equation*}
Taking the divergence, we obtain
\begin{equation*}
  (\Delta - a)(\nabla\cdot \mathbf{v}) = \Delta p.
\end{equation*}
If we assume the fluid is incompressible with a point source at 0, we have $\nabla\cdot\mathbf{v} = \delta_0$.
The motion of the resulting fluid domain $D$ is given by
\begin{equation} \label{QSF_growth}
  \begin{cases}
  \Delta p = (\Delta - a)\delta_0 & \textrm{ in } D\\
  p = 0 & \textrm{ on } \partial D, \\
  \end{cases}
\end{equation}
with velocity field given by $(\Delta - a)\mathbf{v} = \nabla p$. This model opens a new avenue for the study of pressure--driven
quasi--static fluid flow in a narrow channel, providing a direction for future research. Difficulties abound, however;
QSF is mathematically delicate, involving more derivatives than elliptic growth. Furthermore the model
resists the more physical reasoning we used for elliptic growth---it is difficult to compute even simple examples and
no analogue of the Richardson theorem is apparent.

\section*{Acknowledgements}
  The author wishes to thank both Mihai Putinar and Erik Lundberg for many helpful discussions on the
  mathematics of elliptic growth.
  The author also wishes to thank Mark Mineev--Weinstein for fruitful discussions of
  Hele--Shaw flow and reregularizations, from which QSF emerged; we present it here to ensure it finds a place in the literature.

%%%%%%%%%%%%%%%%%
\bibliographystyle{amsplain}

\vspace{0.2in}
\textsc{Department of Mathematics, Vanderbilt University, Nashville, TN, 37240} \\
\url{charles.z.martin@vanderbilt.edu}

\end{document}